\definecolor{darkblue}{rgb}{0.0,0.0,0.6}
\definecolor{darkgreen}{rgb}{0.0,0.6,0.0}
\pgfplotsset{width=6.5cm,compat=newest}
\newtheorem{algorithm}[theorem]{Algorithm}
\numberwithin{table}{section}    
\numberwithin{figure}{section}   
\numberwithin{equation}{section} 
\renewcommand{\clos}[1]{\operatorname{cl}\left(#1\right)}
\newtheorem{assumption}[theorem]{Assumption}
\newtheorem{remark}[theorem]{Remark}
\newenvironment{proofof}[1]{\emph{Proof of #1.}}{\endproof}
\begin{document}
\title{A non-smooth trust-region method for locally Lipschitz functions with application to optimization problems constrained by variational inequalities}
\date{\today}
\author{C.~Christof\footnotemark[3] \and J.~C.~De Los Reyes\footnotemark[4] \and C.~Meyer\footnotemark[3]}
\renewcommand{\thefootnote}{\fnsymbol{footnote}}
\footnotetext[3]{Faculty of Mathematics, Technische Universit\"at Dortmund, Germany.}
\renewcommand{\thefootnote}{\fnsymbol{footnote}}
\footnotetext[4]{Research Center on Mathematical Modelling (MODEMAT), Escuela Politécnica Nacional, Quito, Ecuador}

\maketitle
\begin{abstract}
  We propose a non-smooth trust–region method for solving optimization problems with locally Lipschitz continuous functions, with application to problems constrained by variational inequalities of the second kind. Under suitable assumptions on the model functions, convergence of the general algorithm to a C– stationary point is verified. For variational inequality constrained problems, we are able to properly characterize the Bouligand subdifferential of the reduced cost function and, based on that, we propose a computable trust–region model which fulfills the convergence hypotheses of the general algorithm. The article concludes with the experimental study of the main properties of the proposed method based on two different numerical instances.
\end{abstract}

\begin{keywords}
 Trust--region methods, Bouligand differentiability, stationarity conditions, optimization with variational inequality constraints.
\end{keywords}

\section{Introduction}

The study of optimization problems with variational inequality (VI) constraints is a challenging topic in mathematical programming, due to the intricate structure of the type of stationary points one aims to reach. Whereas for differentiable problems only one type of stationarity takes place, in the nonsmooth framework a family of concepts arise (e.g., Clarke, Dini, Bouligand or Mordukhovich stationarity), each one with advantages and shortcomings.

To overcome the difficulties related to the nonsmoothness of these types of problems, relaxation or penalization approaches have been frequently proposed, with different outcomes concerning optimality conditions and solution algorithms. The main criticism to these approaches, however, is that by removing/relaxing the nonsmoothness, the structure of the original problem is altered, possibly leading to undesired or unphysical solutions.

In the special case of \emph{Mathematical Programs with Equilibrium Constraints (MPEC)} intensive efforts have been carried out and different methods proposed or extended (see \cite{luo} and the references therein). Typically, convergence to C-stationary points can be guaranteed for the proposed algorithms. The search for stronger (like B-- or M--) stationary points remains a challenge.

In the case of optimization problems with variational inequality constraints of the second kind, much less work has been carried out. In \cite{delosreyes2011} a semismooth Newton method based on a regularized version of the problem was proposed and superlinear convergence verified. The solution, however, corresponds to a regularized problem and not to the original one, although consistency is also proved there. A first attempt to find solutions without using regularization was tested in \cite{delosreyesmeyer}, where a trust--region algorithm was considered with promising results.

Trust-region methods have been investigated for nonsmooth optimization of locally Lipschitz continuous functions in, e.g., \cite{sunyuan,dennis,outratazowe,qisun,noll}. The underlying hypotheses, however, are difficult to verify for optimization problems with variational inequality constraints. This especially concerns the hypothesis that the cost function has to be \emph{regular} (see \cite{scholtesstohr} for a discussion on this matter), which happens to be very restrictive for problems as the ones considered in this manuscript. In combination with bundle methods, a convergence theory based on weaker assumptions, similar to ours, was recently studied in \cite{noll}.


The purpose of this paper is twofold. First, we propose a general non-smooth trust-region method for locally Lipschitz continuous functions, and carry out the convergence analysis of the approach. Similarly to \cite{ayp, noll}, we prove convergence to C-stationary points without assuming regularity of the cost function. An essential and novel feature of our algorithm is the computation of the quality indicator, based on a comparison between an ``easy'' local model and a complicated one containing neighborhood information (see \ref{it:quality} below).

Second, and maybe more important, we consider the application of the proposed algorithm to optimal control problems governed by variational inequalities of the second kind. To that end, the Bouligand subdifferential of the control-to-state map
is precisely characterized, which allows to construct a model function which satisfies the
hypotheses of our general trust-region method. Differently from other contributions were the choice of a good candidate is assumed, we provide, for this special family of problems, a way to find them explicitely.

The paper is organized as follows. In Section~\ref{sec:TRgeneral} the general algorithm is presented, together with the assumptions on the model function and a general convergence result.
In Section~\ref{sec:composite}, we show how to construct a suitable model function for the case
of composite functions with a non-smooth inner function as they appear in the implicit programming approach for optimal control of VIs. Afterwards, in Section~\ref{sec:optvi},
we apply these findings to an optimal control problem governed by a VI of the second kind.
The construction of the model function associated with this example is based on a precise
characterization of the Bouligand-subdifferential of the control-to-state mapping.
In Section~\ref{section: experiments}, numerical experiments are carried out to verify the main properties of the proposed algorithm. The paper ends with some concluding remarks.

\subsection{Notation}
By $\lambda^n$, we denote the $n$-dimensional Lebesgue-measure.
Moreover, $\|\cdot \|$ and $\dual{\cdot}{\cdot}$ denote the Euclidean norm and the associated
scalar product, whereas $\| \cdot \|_\infty$ and $\| \cdot \|_1$ stand for
the maximum and 1-norm, respectively. In addition, $\|\cdot\|_{\R^{n\times n}}$
denotes the spectral norm, and we sometimes suppress the index $\R^{n\times n}$, if no
ambiguity is possible. Given $x\in \R^n$ and $r>0$, we denote by $B_r(x)$ the \emph{closed} ball around $x$ with radius $r$.

\section{A Non-Smooth Trust-Region Algorithm}\label{sec:TRgeneral}

We consider the general nonlinear optimization problem
\begin{equation}\tag{P}\label{eq:p}
 \min_{x\in \R^n} \; f(x),
\end{equation}
with an objective function satisfying the following conditions:

\begin{assumption}[Objective function]\label{assu:f}
 The function $f: \R^n \to \R$ is supposed to be \emph{locally Lipschitz continuous},
 i.e., for all $x\in \R^n$ there exist $\delta > 0$ and $L > 0$ so that
 \begin{equation*}
  |f(y) - f(z)| \leq L\, \|y-z\| \quad \forall\,y,z\in B_\delta(x).
 \end{equation*}
\end{assumption}

Next, we introduce some basic concepts of nonsmooth optimization that are going to be used along the paper.

\begin{definition}[Subdifferentials]
 Let $F:\R^n \to \R^m$, $m, n\in \N$, be locally Lipschitz-continuous.
 We denote the set of points, where $F$ is differentiable, by $\DD_F$.
 By Rademacher's theorem, this set is dense in $\R^n$. For a given $x\in \R^n$ we define
 \begin{itemize}
  \item the \emph{Bouligand-subdifferential} by
  \begin{equation} \label{def:subdiff}
   \partial_B F(x) := \{ G \in \R^{m\times n} :
   \exists\, (x_n)\subset \DD_F \text{ with } x_n \to x,\; F'(x_n) \to G \},
  \end{equation}
  \item the \emph{Clarke-subdifferential} by
  \begin{equation}
   \partial F(x) = \clos{\conv(\partial_B F(x))},
  \end{equation}
  where $\conv$ denotes the convex hull.
 \end{itemize}
\end{definition}

It is well known that in case of a scalar-valued locally Lipschitz-continuous function $f:\R^n\to \R$
the Clarke-subdifferential can equivalently be expressed as
\begin{equation*}
 \partial f(x) = \{ g\in \R^n: \dual{g}{v} \leq f^\circ(x;v) \quad\forall\, v\in \R^n \},
\end{equation*}
where $f^\circ$ denotes Clarke's generalized directional derivative. For scalar-valued functions
we moreover define the following notion of stationarity:

\begin{definition}\label{def:stat}
 Let $f:\R^n \to \R$, $n\in \N$, be locally Lipschitz-continuous. We then call a point $\bar x\in \R^n$
 C(larke)-stationary, if $0\in \partial f(\bar x)$.
\end{definition}

Our algorithm is based on a suitably chosen model function, whose existence is assumed as a start.
In the later sections, we will see how to construct such a model for concrete problems.
To be more precise, we require the following.

\begin{assumption}[Model function]\label{assu:model}\
 \begin{enumerate}
  \item For every $x \in \R^n$, we can calculate a subgradient $g \in \partial f(x) $.
  \item\label{it:model}
  There is a model function $\phi: \R^n \times \R^+ \times \R^n \to \R$ satisfying the following conditions:
  \begin{enumerate}
   \item\label{it:semi} For every $(x,\Delta) \in \R^n \times \R^+$, the mapping
   $\R^n \ni d \mapsto \phi(x, \Delta; \cdot\,)$ is positively homogeneous and lower semicontinuous.
   \item\label{it:cstat} \emph{Stationarity indicator property:}\\
  The stationarity measure defined through
   \begin{equation}\label{eq:normersatz}
    \psi(x, \Delta)  := - \min_{\|d\|\leq 1} \phi(x, \Delta; d) \geq 0
   \end{equation}
   satisfies the following:\\
   If a sequence $\{x_k, \Delta_k\} \subset \R^n \times \R^+$ satisfies
   \begin{equation*}
    x_k \to x, \quad \Delta_k \to 0, \quad \text{and} \quad \psi(x_k, \Delta_k) \to 0,
   \end{equation*}
   then it follows that $0 \in \partial f(x)$.
   \item\label{it:remainder} \emph{Remainder term property:}\\
   For every sequence  $\{x_k, \Delta_k\} \subset \R^n \times \R^+$ satisfying
   \begin{equation*}
    x_k \to x, \quad \Delta_k \to 0, \quad \text{and} \quad \lim_{k\to\infty} \psi(x_k, \Delta_k) > 0,
   \end{equation*}
   there holds
   \begin{equation}\label{eq:remainder}
   \limsup_{k\to\infty} \sup_{d\in B_{\Delta_k}(0)}
   \frac{f(x_k + d) - f(x_k) - \phi(x_k, \Delta_k; d)}{\Delta_k} \leq 0.
   \end{equation}
  \end{enumerate}
 \end{enumerate}
\end{assumption}

Note that the inequality in \eqref{eq:normersatz} follows immediately from the positive homogeneity and the lower semicontinuity of
$\phi(x, \Delta, \cdot\,)$.
Note moreover that the limes superior in \eqref{eq:remainder} is actually a limes, since
the inner supremum is always greater or equal zero (just choose $d = 0 \in B_{\Delta_k}(0)$).

\begin{remark}
 Assumption~\ref{assu:model}(\ref{it:model}) is closely related to the hypotheses required in other
 contributions in the field of non-smooth trust-region methods such as in \cite[Assumption~A2]{qisun} or \cite[Theorem~1]{noll}.
 In particular, condition~(\ref{it:remainder}) is similar to the assumption that the objective $f$ admits a
 \emph{strict first-order model}, cf.~\cite[Definition~1 and Axiom~$(\widetilde{M_2})$]{noll}. To be more precise,
 it is easy to see that \cite[$(\widetilde{M_2})$]{noll} is sufficient for the remainder term property in (\ref{it:remainder}).
 This property reflects the fact that the model function has to incorporate certain information
 about the objective function in a neighborhood of the current iterate.
\end{remark}

Given the model function, our algorithm reads as follows:

\begin{algorithm}[Non-Smooth Trust-Region Algorithm]\label{alg:TR}
 \begin{algorithmic}[1]
  \STATE\label{it:init} Initialization:\\
  Choose constants
  \begin{equation*}
   \Delta_{\min} > 0, \quad 0 < \eta_1 < \eta_2 < 1, \quad 0 < \beta_1 < 1 < \beta_2,
   \quad 0 < \mu \leq 1,
  \end{equation*}
  an initial value $x_0 \in \R^n$, and an initial TR-radius $\Delta_0 > \Delta_{\min}$. Set $k=0$.
  \FOR{$k=0, 1, 2, ...$}
   \STATE\label{it:Hk} Choose a subgradient $g_k \in \partial f(x_k)$ and a matrix $H_k \in \R^{n\times n}_{\textup{sym}}$.
   \IF{$g_k = 0$}\label{it:gknull}
    \STATE STOP the iteration, $x_k$ is C-stationary, i.e., $0 \in \partial f(x_k)$.
   \ELSE
    \IF{$\Delta_k \geq \Delta_{\min}$} \label{it:deltamin}
     \STATE Compute an inexact solution $d_k$ of the \emph{trust-region subproblem}
     \begin{equation}\tag{Q$_{k}$}\label{eq:qk}
      \left.
      \begin{aligned}
       \min_{d\in \R^n} & \quad q_k(d) := f(x_k) + \dual{g_k}{d} + \frac{1}{2}\, d^\top H_k d\\
       \text{s.t.} & \quad \|d\| \leq \Delta_k,
      \end{aligned}
      \qquad\right\}
     \end{equation}
     that satisfies the \emph{generalized Cauchy-decrease condition}
     \begin{equation}\label{eq:cauchy}
      f(x_k) - q_k(d_k) \geq
      \frac{\mu}{2}\,\|g_k\|\,\min\Big\{ \Delta_k, \frac{\|g_k\|}{\|H_k\|} \Big\}.
     \end{equation}
     \STATE Compute the quality indicator
     \begin{equation*}
      \rho_k := \frac{f(x_k) - f(x_k + d_k)}{f(x_k) - q_k(d_k)}.
     \end{equation*}
    \ELSE
     \STATE\label{step:TRmod} Compute an inexact solution $d_k$ of the following \emph{modified trust-region subproblem}
     \begin{equation}\tag{$\tilde{\textup{Q}}_k$}\label{eq:tildeqk}
      \left.
      \begin{aligned}
       \min_{d\in \R^n} & \quad \tilde q_k(d):= f(x_k) + \phi(x_k, \Delta_k; d) + \frac{1}{2}\, d^\top H_k d\\
       \text{s.t.} & \quad \|d\| \leq \Delta_k
      \end{aligned}
      \qquad\right\}
     \end{equation}
     that satisfies the \emph{modified Cauchy-decrease condition}
     \begin{equation}\label{eq:tildecauchy}
      f(x_k) - \tilde q_k(d_k) \geq
      \frac{\mu}{2}\,\psi(x_k, \Delta_k) \,\min\Big\{ \Delta_k, \frac{\psi(x_k, \Delta_k)}{\|H_k\|} \Big\},
     \end{equation}
     where $\psi(x_k, \Delta_k)$ is as defined in \eqref{eq:normersatz}.
     \STATE\label{it:quality} Compute the modified quality indicator
     \begin{equation}\label{eq:rhomod}
      \rho_k :=
      \begin{cases}
       \displaystyle{\frac{f(x_k) - f(x_k + d_k)}{f(x_k) - \tilde{q}_k(d_k)}},
       & \text{if }  \psi(x_k, \Delta_k) > \|g_k\|\, \Delta_k \\
       0, & \text{if }  \psi(x_k, \Delta_k) \leq \|g_k\|\, \Delta_k.
      \end{cases}
     \end{equation}
    \ENDIF
    \STATE Update: Set
    \begin{align}
     x_{k+1} & :=
     \begin{cases}
      x_k, & \text{if } \rho_k \leq \eta_1 \quad \text{(null step)},\\
      x_k + d_k, & \text{otherwise} \quad \text{(successful step)},
     \end{cases} \label{eq:upx}\\
     \Delta_{k+1} &:=
     \begin{cases}
      \beta_1\,\Delta_k, & \text{if } \rho_k \leq \eta_1,\\
      \max\{\Delta_{\min}, \Delta_k\}, & \text{if } \eta_1 < \rho_k \leq \eta_2,\\
      \max\{\Delta_{\min}, \beta_2 \Delta_k\}, & \text{if } \rho_k > \eta_2.
     \end{cases} \label{eq:updelta}
    \end{align}
    Set $k = k+1$.
   \ENDIF
  \ENDFOR
 \end{algorithmic}
\end{algorithm}

\begin{remark}[Bouligand-subgradients]
 Our convergence analysis in Section~\ref{subsec:conv} below does not require to choose a particular
 subgradient in Step~\ref{it:Hk}, it works for every element of the Clarke-subdifferential.
 Therefore, one could well restrict to elements of the Bouligand-subdifferential in this step.
 In this case, the termination criterion in Step~\ref{it:gknull} would imply that
 $0 \in \partial_B f(x_k)$, i.e., a stationarity condition which is in general only meaningful
 in smooth points.\\
 In the applications we are interested in, we exactly proceed in this way and choose
 elements of the Bouligand-subdifferential in Step~\ref{it:Hk},
 cf.~Algorithm \ref{alg:trvi} below.
\end{remark}

\begin{remark}[Comparison to other non-smooth trust region algorithms]
 The essential differences to other non-smooth trust-region algorithms such as for instance the ones
 presented in \cite{dennis, qisun, sunyuan, noll} are the following:
 \begin{itemize}
  \item By introducing the distinction of cases in step~\ref{it:deltamin},
  we allow for a classical trust-region subproblem, which is easy to solve by standard methods such as
  the dogleg method or Steinhaug's CG method. As our numerical experiments in Section~\ref{section: experiments} show,
  in the applications we have in mind, the second case involving the complicated model
  in \eqref{eq:tildeqk} occurs only if $\Delta_{min}$ is chosen comparatively large. Our algorithm therefore accounts for
  the fact that many non-smooth problems can well be solved by classical trust-region methods
  and only switches to complicated model functions if it is strictly necessary.
  \item Another essential feature of the algorithm, which ensures the convergence of the method,
  is the computation of the quality indicator in step~\ref{it:quality}. It basically
  corresponds to a comparison of the ``easy'' and the complicated model weighted with the
  trust-region radius. Since the complicated model contains neighborhood information of the
  objective function, it may become insufficiently accurate to measure stationarity.
  This issue is resolved by the comparison with the local ``easy'' model in step~\ref{it:quality}.
 \end{itemize}
\end{remark}

Note that the modified trust-region subproblem \eqref{eq:tildeqk} admits an optimal solution
due to the lower semicontinuity of $\phi$ w.r.t.\ the last argument by Assumption~\ref{assu:model}(\ref{it:semi}).
Moreover, it is always possible to find inexact solutions to \eqref{eq:qk} and \eqref{eq:tildeqk}
that fulfill the respective Cauchy-decrease conditions.

\begin{lemma}\label{lem:cauchyglob}
 Global minimizers of \eqref{eq:qk} and \eqref{eq:tildeqk} satisfy the respective
 Cauchy-decrease conditions in \eqref{eq:cauchy} and \eqref{eq:tildecauchy} for every $\mu \leq 1$.
\end{lemma}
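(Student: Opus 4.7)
The plan is to prove both Cauchy-decrease bounds by the classical Cauchy-point argument: exhibit an explicit feasible direction (a \emph{Cauchy arc}) along which the quadratic model decreases by at least the right-hand side of \eqref{eq:cauchy} respectively \eqref{eq:tildecauchy}, and then observe that a global minimizer can only do better. Since both $q_k$ and $\tilde q_k$ have the form ``$f(x_k) + \ell(d) + \tfrac12 d^\top H_k d$'' with $\ell$ positively homogeneous of degree one, the two cases can be handled by essentially the same one-dimensional minimization.

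For \eqref{eq:qk}, I would parametrize the Cauchy arc as $d(t) = -t\,g_k$ with $t \geq 0$ restricted by $\|d(t)\| \leq \Delta_k$, i.e.\ $t \in [0, \Delta_k/\|g_k\|]$ (the case $g_k=0$ makes \eqref{eq:cauchy} trivial since $q_k(0) = f(x_k)$). Then
\begin{equation*}
 f(x_k) - q_k(d(t)) = t\,\|g_k\|^2 - \tfrac12 t^2\, g_k^\top H_k g_k.
\end{equation*}
Distinguishing whether $g_k^\top H_k g_k \leq 0$ (take $t$ maximal) or $>0$ (take $t = \|g_k\|^2/(g_k^\top H_k g_k)$ if feasible, else the boundary) and using $|g_k^\top H_k g_k| \leq \|H_k\|\|g_k\|^2$ yields, by the textbook computation,
\begin{equation*}
 \max_{t} \bigl(f(x_k)-q_k(d(t))\bigr) \geq \tfrac12 \|g_k\|\,\min\{\Delta_k,\|g_k\|/\|H_k\|\},
\end{equation*}
and a global minimizer $d_k$ of \eqref{eq:qk} inherits this lower bound since $d(t)$ is feasible; the factor $\mu \leq 1$ only weakens the inequality.

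For \eqref{eq:tildeqk} the analogous construction uses a minimizer $\bar d$ of $\phi(x_k, \Delta_k;\cdot)$ on the closed unit ball, which exists because $\phi(x_k,\Delta_k;\cdot)$ is lower semicontinuous on a compact set by Assumption~\ref{assu:model}\ref{it:semi}. If $\psi(x_k,\Delta_k)=0$ the bound \eqref{eq:tildecauchy} is trivial ($d=0$ is feasible and $\tilde q_k(0)=f(x_k)$); otherwise positive homogeneity forces $\|\bar d\|=1$ and $\phi(x_k,\Delta_k;\bar d) = -\psi(x_k,\Delta_k)$. The Cauchy arc $d(t) = t\,\bar d$ for $t\in[0,\Delta_k]$ is feasible, and positive homogeneity again gives
\begin{equation*}
 f(x_k) - \tilde q_k(d(t)) = t\,\psi(x_k,\Delta_k) - \tfrac12 t^2\,\bar d^\top H_k \bar d.
\end{equation*}
Using $|\bar d^\top H_k \bar d| \leq \|H_k\|$, the identical one-dimensional case analysis as above produces the decrease $\tfrac12\psi(x_k,\Delta_k)\min\{\Delta_k,\psi(x_k,\Delta_k)/\|H_k\|\}$, and a global minimizer of \eqref{eq:tildeqk} does at least as well. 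The only genuine obstacle, beyond the routine quadratic optimization, is the justification that a minimizer of $\phi(x_k,\Delta_k;\cdot)$ on $\overline{B_1(0)}$ exists and can be taken with $\|\bar d\|=1$ when $\psi>0$; both follow from Assumption~\ref{assu:model}\ref{it:semi} and positive homogeneity.
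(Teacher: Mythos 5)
Your proof is correct and is essentially the argument the paper invokes: the paper simply notes that positive homogeneity of $\phi$ allows one to argue as in \cite[Lemma~3.2]{qisun}, which is exactly the Cauchy-point computation along $-g_k$ (resp.\ along a unit-norm minimizer of $\phi(x_k,\Delta_k;\cdot)$) that you carry out explicitly.
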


\begin{proof}
 Since our model function $\phi$ is assumed to be positively homogeneous, we can argue as in
 \cite[Lemma~3.2]{qisun}, which immediately gives the assertion.
\hfill\end{proof}

\subsection{Convergence Analysis}\label{subsec:conv}

In what follows, we show that accumulation points of the sequence of iterates are C-stationary
as defined in Definition~\ref{def:stat}. For this purpose, we need the following

\begin{assumption}[Hessian approximation]\label{assu:hesse}
 The matrices $H_k\in \R^{d\times d}_{\textup{sym}}$ from Step~\ref{it:Hk} of Algorithm \ref{alg:TR} are supposed to satisfy
 \begin{equation*}
  \|H_k\| \leq C_H \quad \forall k \in \N
 \end{equation*}
 with a constant $C_H > 0$.
\end{assumption}

\begin{proposition}\label{prop:proposition42}
 Assume that Algorithm \ref{alg:TR} does not terminate in finitely many iterations.
 Let $(x_k)$ be the sequence of iterates generated by Algorithm \ref{alg:TR}, and suppose that $(x_{k_l})$ is a subsequence of $(x_k)$ satisfying
 \begin{equation*}
  x_{k_l} \to \bar x \quad \text{and} \quad \Delta_{k_l} \to 0 \quad \text{as } l \to \infty.
 \end{equation*}
 Then $0\in \partial f(\bar x)$ holds true.
\end{proposition}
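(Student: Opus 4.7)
The plan is to reduce the claim to the stationarity indicator property in Assumption~\ref{assu:model}(\ref{it:cstat}): it suffices to exhibit a subsequence of indices $(m_l)$ along which $x_{m_l} \to \bar x$, $\Delta_{m_l} \to 0$, and $\psi(x_{m_l}, \Delta_{m_l}) \to 0$. The crucial first observation is that $m_l := k_l$ is not quite the right choice; I would instead work with the shifted subsequence $m_l := k_l - 1$. Indeed, for $l$ large enough one has $\Delta_{k_l} < \beta_1\,\Delta_{\min}$, and inspection of the update rule~\eqref{eq:updelta} shows that this can only happen if the previous iteration was a null step ($\rho_{k_l - 1} \leq \eta_1$) executed in the modified branch ($\Delta_{k_l - 1} < \Delta_{\min}$). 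It then follows that $x_{m_l} = x_{k_l}$ (null step leaves the iterate unchanged) and $\Delta_{m_l} = \Delta_{k_l}/\beta_1$, so both $x_{m_l} \to \bar x$ and $\Delta_{m_l} \to 0$.

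To prove $\psi(x_{m_l}, \Delta_{m_l}) \to 0$ I would argue by contradiction: suppose that, along a further subsequence, $\psi(x_{m_l}, \Delta_{m_l}) \geq \epsilon > 0$. Since $f$ is locally Lipschitz continuous and $x_{m_l} \to \bar x$, the Clarke subdifferential is locally bounded; hence $\|g_{m_l}\|$ is bounded and $\|g_{m_l}\|\,\Delta_{m_l} \to 0 < \epsilon$, placing us in the first case of the modified quality indicator~\eqref{eq:rhomod}. The modified Cauchy-decrease condition~\eqref{eq:tildecauchy}, together with the Hessian bound $\|H_{m_l}\| \leq C_H$ from Assumption~\ref{assu:hesse} and $\Delta_{m_l} \to 0$, yields $f(x_{m_l}) - \tilde q_{m_l}(d_{m_l}) \geq \tfrac{\mu\epsilon}{2}\,\Delta_{m_l}$ for $l$ sufficiently large. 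The remainder term property~(\ref{it:remainder}) supplies the one-sided bound $f(x_{m_l} + d_{m_l}) - f(x_{m_l}) - \phi(x_{m_l}, \Delta_{m_l}; d_{m_l}) \leq o(\Delta_{m_l})$, while the Hessian term contributes at most $O(\Delta_{m_l}^2)$. A short algebraic manipulation of $\rho_{m_l} - 1$ then gives $\liminf_l \rho_{m_l} \geq 1$, which for $l$ large contradicts $\rho_{m_l} \leq \eta_1 < 1$. Applying Assumption~\ref{assu:model}(\ref{it:cstat}) to $(x_{m_l}, \Delta_{m_l})$ now yields $0 \in \partial f(\bar x)$.

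The main obstacle is the bookkeeping in the first step: recognizing that the null-step inequality $\rho_k \leq \eta_1$, the only estimate strong enough to contradict $\rho_k \to 1$, is available at the shifted index $k_l - 1$ rather than at $k_l$ itself, and then verifying that this shift still forces $\Delta_{m_l} \to 0$ while keeping us inside the modified branch where~\eqref{eq:tildecauchy} is available. A secondary subtlety is that~(\ref{it:remainder}) provides only a one-sided upper bound on $f(x+d) - f(x) - \phi(x,\Delta;d)$; fortunately this is precisely the sign needed to push $\rho_{m_l}$ upward towards~$1$, so no two-sided estimate is required.
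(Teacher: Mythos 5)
Your proposal is correct and follows essentially the same route as the paper's own proof: shift to the index $k_l-1$, observe that these must be null steps in the modified branch, and derive a contradiction from $\rho_{k_l-1}\leq\eta_1$ by combining the modified Cauchy-decrease condition, the Hessian bound, and the remainder term property to force $\liminf\rho\geq 1$. The only cosmetic difference is that the paper passes to the subsequence attaining the limes superior of $\psi$ so that the hypothesis $\lim_k\psi(x_k,\Delta_k)>0$ of Assumption~\ref{assu:model}(\ref{it:remainder}) is literally met, a technicality your argument implicitly absorbs.
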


\begin{proof}
 Since $\Delta_{k_l} \to 0$, there exists an $L \in \mathbb{N}$ such that $\Delta_{k_l} < \beta_1 \Delta_{\min}$ for all $l \geq L$.
 This is only  possible if the iterations $k_l - 1$, $l \geq L$, are all null steps, i.e., for all $l \geq L$, we have
 \begin{equation}\label{eq:kminus1}
  x_{k_l - 1} = x_{k_l},\qquad \Delta_{k_l } = \beta_1 \Delta_{k_l - 1} < \beta_1 \Delta_{min}, \qquad \rho_{k_l - 1 } < \eta_1 < 1.
 \end{equation}
 This shows
 \begin{equation*}
  x_{k_l-1} \to \bar x \quad \text{and}\quad \Delta_{k_l-1} \to 0.
 \end{equation*}
 We next show $\psi(x_{k_l-1}, \Delta_{k_l-1}) \to 0$. Once this is established,
 the assertion immediately follows from Assumption~\ref{assu:model}\eqref{it:cstat}.
 For this end, we argue by contradiction and assume that there is an $\varepsilon > 0$ so that
 \begin{equation}\label{eq:limsuppsi}
  \limsup_{l\to\infty} \psi(x_{k_l-1}, \Delta_{k_l-1}) \geq \varepsilon.
 \end{equation}
 Consider now the subsequence of $(x_{k_l-1}, \Delta_{k_l-1})$, which attains the limes superior,
 denoted for simplicity by $(x_m, \Delta_m)_{m\in M}$.
 Then, for $m\in M$ sufficiently large, we have $\psi(x_m, \Delta_m) \geq \varepsilon/2$.
 Since, in addition, the local Lipschitz-continuity of $f$ and $x_m \to \bar x$
 for $m\in M \to \infty$ imply that $\|g_m\|\leq L(\bar x)$, where $L(\bar x)$ denotes the local Lipschitz constant,
 the convergence of $\Delta_m$ to 0 implies that $\psi(x_m, \Delta_m) > \|g_m\|\, \Delta_m$
 for $m\in M$ sufficiently large. Therefore, the first case in \eqref{eq:rhomod} applies in the
 computation of the quality indicator. Moreover, the modified Cauchy-decrease condition in
 \eqref{eq:tildecauchy} and \eqref{eq:normersatz} imply $f(x_m) - \tilde q_m(d_m) > 0$.
 Thus, for all $m \in \N$ sufficiently large, we obtain
 \begin{equation*}
 \begin{aligned}
  \rho_m 
  & =  1 - \frac{f(x_m + d_m) - f(x_m) - \phi(x_m, \Delta_m; d_m) - \frac{1}{2}d_m^\top H_m d_m}{f(x_m) - \tilde q_m(d_m)} \\
  & \geq 1 - \frac{\sup_{d\in B_{\Delta_m}(0)} \big(f(x_m + d) - f(x_m) - \phi(x_m, \Delta_m; d)\big) + \frac{1}{2}\, C_H\,\Delta_m^2}
{f(x_m) - \tilde q_m(d_m)} \\
  & \geq 1 - \frac{\sup_{d\in B_{\Delta_m}(0)} \big(f(x_m + d) - f(x_m) - \phi(x_m, \Delta_m; d)\big) + \frac{1}{2}\, C_H\,\Delta_m^2}
  {\frac{\mu}{4}\,\varepsilon\,\min\Big\{ \Delta_m,\frac{\varepsilon}{2 C_H} \Big\}},
 \end{aligned}
 \end{equation*}
 where we used \eqref{eq:tildecauchy} and \eqref{eq:limsuppsi} for the last estimate.
 Note that the supremum in the enumerator is always greater or equal zero, since $d=0$ is feasible.
 Assumption~\ref{assu:model}\eqref{it:remainder} then implies
 \begin{equation*}
 \begin{aligned}
  \liminf_{m\in M \to \infty} \rho_m
  \geq 1 & - \frac{2}{\mu\,\varepsilon}\, C_H\,\lim_{m\in M\to \infty} \Delta_m \\
  & - \frac{4}{\mu\,\varepsilon}\, \limsup_{m\in M \to \infty} \,
  \sup_{d\in B_{\Delta_m}(0)} \frac{f(x_m + d) - f(x_m) - \phi(x_m, \Delta_m; d)}{\Delta_m} \geq 1,
 \end{aligned}
 \end{equation*}
 which however contradicts the last inequality in \eqref{eq:kminus1}.
 Therefore, \eqref{eq:limsuppsi} is not true, which, together with the non-negativity of
 $\psi$ results in
 \begin{equation}\label{eq:psiconv}
  0 \leq \liminf_{l\to\infty} \psi(x_{k_l-1}, \Delta_{k_l-1})
  \leq \limsup_{l\to\infty} \psi(x_{k_l-1}, \Delta_{k_l-1}) = 0.
 \end{equation}
 This finally yields the desired convergence of $\psi(x_{k_l-1}, \Delta_{k_l-1})$,
 which establishes the assertion.
\hfill\end{proof}

\begin{lemma}\label{lem:fconv}
 Assume that Algorithm \ref{alg:TR} does not terminate in finitely many steps.
 If the sequence of iterates $(x_k)$ admits an accumulation point, then the sequence of function values $(f(x_k))$ converges to some $\bar f \in \R$.
\end{lemma}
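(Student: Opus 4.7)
The plan is to show that the entire sequence $(f(x_k))$ is monotonically non-increasing, and then invoke continuity of $f$ at the accumulation point to conclude convergence.

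First I would verify monotonicity of $(f(x_k))$ case by case, according to the update rule \eqref{eq:upx}. If $\rho_k \le \eta_1$, then $x_{k+1} = x_k$ and there is nothing to show. Otherwise $x_{k+1} = x_k + d_k$ and I need to show $f(x_k+d_k) \le f(x_k)$. In the unmodified branch ($\Delta_k \ge \Delta_{\min}$), since the algorithm does not terminate we have $g_k \ne 0$, so the Cauchy-decrease condition \eqref{eq:cauchy} yields $f(x_k) - q_k(d_k) > 0$; combined with $\rho_k > \eta_1 > 0$ this gives $f(x_k) - f(x_k+d_k) > 0$. In the modified branch ($\Delta_k < \Delta_{\min}$), the fact that $\rho_k > \eta_1 > 0$ rules out the second case of \eqref{eq:rhomod}, so $\rho_k = (f(x_k)-f(x_k+d_k))/(f(x_k)-\tilde q_k(d_k))$ with $\psi(x_k,\Delta_k) > \|g_k\|\Delta_k \ge 0$. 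The modified Cauchy-decrease \eqref{eq:tildecauchy} then makes the denominator strictly positive, and again $\rho_k > 0$ forces the numerator to be strictly positive.

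Having established $f(x_{k+1}) \le f(x_k)$ for all $k$, I would use the hypothesized accumulation point $\bar x$: pick a subsequence $x_{k_l} \to \bar x$. Since $f$ is locally Lipschitz by Assumption~\ref{assu:f}, in particular continuous, we have $f(x_{k_l}) \to f(\bar x)$. A monotonically non-increasing real sequence that possesses a convergent subsequence must itself converge to the same limit; hence $f(x_k) \to \bar f := f(\bar x) \in \mathbb{R}$.

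There is no real obstacle here. The only subtlety worth flagging is the branch in \eqref{eq:rhomod} where $\rho_k$ is set to $0$ by definition when $\psi(x_k,\Delta_k) \le \|g_k\|\Delta_k$: one should note that $0 \le \eta_1$ (since $0 < \eta_1$), so this case is automatically classified as a null step and contributes no change to $f(x_k)$, which is consistent with the monotonicity argument. Everything else is a direct consequence of positivity of the Cauchy-decrease quantities combined with $\rho_k > \eta_1 > 0$.
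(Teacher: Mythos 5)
Your proof is correct and follows essentially the same route as the paper: monotone decrease of $(f(x_k))$ by construction, then continuity of $f$ at the accumulation point to rule out divergence to $-\infty$ and pin down the limit. The paper simply states the monotonicity as "classical" without the case-by-case verification you spell out, but the argument is the same.
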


\begin{proof}
 The arguments are classical. By construction, the sequence $(f(x_k))$ is monotonically decreasing
 so that $f(x_k) \to \bar f \in \R\cup \{-\infty\}$. If a subsequence $(x_{k_l})$ converges to
 a point $\bar x\in \R^n$, then the continuity of $f$ implies $\bar f = f(\bar x) > -\infty$,
 which yields the claim.
\hfill\end{proof}

\begin{theorem}\label{thm:TRconv}
 Assume that Algorithm \ref{alg:TR} does not terminate in finitely many steps.
 Then every accumulation point of the sequence of iterates is C-stationary.
\end{theorem}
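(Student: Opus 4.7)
My plan is to argue by contradiction, combining Proposition~\ref{prop:proposition42} and Lemma~\ref{lem:fconv} with upper semicontinuity of the Clarke subdifferential. Suppose $\bar x$ is an accumulation point of $(x_k)$ with $0 \notin \partial f(\bar x)$. Then I would pick $\delta, \epsilon > 0$ such that every subgradient $g \in \partial f(y)$ with $y \in B_\delta(\bar x)$ satisfies $\|g\| \geq 2\epsilon$; since $B_\delta(\bar x)$ is closed by the paper's convention, the set $J := \{k \in \mathbb{N} : x_k \in B_\delta(\bar x)\}$ is infinite, and from Lemma~\ref{lem:fconv} one has $f(x_k) - f(x_{k+1}) \to 0$. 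The target is to construct a subsequence indexed in $J$ on which $\Delta_k \to 0$: extracting a further convergent sub-subsequence to some $y \in B_\delta(\bar x)$, Proposition~\ref{prop:proposition42} will then force $0 \in \partial f(y)$, contradicting $\|g\| \geq 2\epsilon$ on $B_\delta(\bar x)$.

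For this I would use two simple lower bounds on the decrease at accepted steps with $k \in J$. In the classical case $\Delta_k \geq \Delta_{\min}$, the Cauchy decrease \eqref{eq:cauchy} together with $\|g_k\| \geq 2\epsilon$ and Assumption~\ref{assu:hesse} gives $f(x_k) - f(x_{k+1}) \geq c_1 > 0$ whenever $\rho_k > \eta_1$; combined with $f(x_k) - f(x_{k+1}) \to 0$ this is possible only finitely often. In the modified case $\Delta_k < \Delta_{\min}$, the definition \eqref{eq:rhomod} forces $\rho_k = 0$ unless $\psi(x_k, \Delta_k) > \|g_k\|\Delta_k \geq 2\epsilon \Delta_k$, so any successful modified step automatically enjoys this lower bound on $\psi$; substituting into \eqref{eq:tildecauchy} yields $f(x_k) - f(x_{k+1}) \geq c_2\, \Delta_k^2$ for some $c_2 > 0$, which combined with $f(x_k) - f(x_{k+1}) \to 0$ forces $\Delta_k \to 0$ along those indices.

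I would then split into two terminal cases. If infinitely many $k \in J$ correspond to a successful modified step, the second estimate directly produces the desired subsequence with $x_k \in B_\delta(\bar x)$ and $\Delta_k \to 0$, and Proposition~\ref{prop:proposition42} delivers the contradiction. Otherwise, there exists $k_0$ such that every $k \geq k_0$ with $k \in J$ is a null step; starting from any $k^* \in J$ with $k^* \geq k_0$, the null-step update gives $x_{k^*+1} = x_{k^*} \in B_\delta(\bar x)$, so $k^*+1 \in J$, and inductively $x_{k^*+m} = x_{k^*}$ with $\Delta_{k^*+m} = \beta_1^m \Delta_{k^*} \to 0$ for all $m \geq 0$. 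Proposition~\ref{prop:proposition42} applied to this stalled subsequence then yields $0 \in \partial f(x_{k^*})$, once again contradicting $x_{k^*} \in B_\delta(\bar x)$.

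The main obstacle I anticipate is keeping the case distinctions (classical/modified subproblem, null/successful step) tidy around $\bar x$ and making sure that, whenever Algorithm~\ref{alg:TR} refuses to drive $\Delta_k$ to zero at a putatively non-stationary accumulation point, either the classical Cauchy estimate yields too much decrease for Lemma~\ref{lem:fconv}, or the stall-and-shrink behaviour of null steps produces a constant-valued subsequence with $\Delta_k \to 0$ to which Proposition~\ref{prop:proposition42} can be applied. The algebraic leverage in the modified case comes entirely from the quality indicator \eqref{eq:rhomod}, which converts the non-stationarity assumption $\|g_k\| \geq 2\epsilon$ into the crucial lower bound $\psi(x_k,\Delta_k) > 2\epsilon\,\Delta_k$ at every accepted step.
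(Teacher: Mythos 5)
Your argument is correct, and it is built from the same ingredients as the paper's proof --- the Cauchy-decrease conditions \eqref{eq:cauchy} and \eqref{eq:tildecauchy}, the convergence of $(f(x_k))$ from Lemma~\ref{lem:fconv}, the quality indicator \eqref{eq:rhomod} to turn acceptance of a modified step into the bound $\psi(x_k,\Delta_k)>\|g_k\|\Delta_k$, Proposition~\ref{prop:proposition42}, and the upper semicontinuity of the Clarke subdifferential --- but it is organized contrapositively rather than directly. The paper fixes the convergent subsequence $(x_{k_l})\to\bar x$, reduces to successful iterations, derives $\min\{\Delta_{k_l},\nu(x_{k_l},\Delta_{k_l})\}\to 0$, and then splits into three cases, in two of which it concludes $\|g_{k_l}\|\to 0$ and invokes closedness of the graph of $\partial f$ at $\bar x$ itself. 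You instead assume $0\notin\partial f(\bar x)$, use upper semicontinuity in its stronger ``uniform neighborhood'' form to bound all subgradients on a closed ball $B_\delta(\bar x)$ away from zero, and work with the set $J$ of all iterates landing in that ball; your contradictions may then land at points $y$ or $x_{k^*}$ in $B_\delta(\bar x)$ other than $\bar x$, which is exactly why the contrapositive framing is needed. What your version buys is a slightly cleaner quantitative mechanism (a fixed decrease $c_1$ per accepted classical step and $c_2\Delta_k^2$ per accepted modified step); what the paper's version buys is a direct statement that the stationarity indicator $\min\{\|g_{k_l}\|,\psi(x_{k_l},\Delta_{k_l})\}$ tends to zero along every convergent subsequence, which it exploits afterwards as the basis for an implementable termination criterion --- information that a pure contradiction argument does not yield. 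Both case analyses are exhaustive: your ``finitely many successful steps in $J$'' branch, with the stall-and-shrink induction, correctly subsumes the paper's preliminary case of finitely many successful iterations together with its case (i).
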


\begin{proof}
 If the number of successful iterations is finite, then there is an $N\in \N$ so that
 all iterations $k\geq N$ are null steps. According to the update rule for null steps,
 it follows that $x_k \to x_N =: \bar x$ and $\Delta_k \to 0$ and thus,
 Proposition~\ref{prop:proposition42} yields that $\bar x$ is C-stationary.

 We can thus focus on the case with infinitely many successful iterations.
 Let $\bar x$ be an arbitrary accumulation point of the sequence of iterates and
 denote the corresponding convergent subsequence by $(x_{k_l})$.
 W.l.o.g.\ we may suppose that the iterations $k_l$ are all successful
 (else, we just shift the index forth to the next successful iteration, which does not change the sequence
 due to the update rule for null steps). Since the iterations are successful,
 the monotonicity of $(f(x_k))$ and the
 Cauchy-decrease conditions in \eqref{eq:cauchy} and \eqref{eq:tildecauchy} imply
 \begin{align*}
  f(x_{k_{l}} ) - f(x_{k_{l+1} } ) &\geq f(x_{k_{l} } ) - f(x_{k_{l} + 1} ) \\
  &\geq \eta_1 \,\frac{\mu}{2}\, \nu(x_{k_l} , \Delta_{k_l } ) \min\left\{\Delta_{k_l},
  \frac{\nu(x_{k_l} , \Delta_{k_l} ) }{C_H}  \right\} \geq 0
 \end{align*}
 with
 \begin{equation}\label{eq:nudef}
  \nu(x_{k_l} , \Delta_{k_l } ) :=
  \begin{cases}
   \|g_{k_l}\|, & \text{if } \Delta_{k_l} \geq \Delta_{\min} \\
   \psi (x_{k_l }, \Delta_{k_l}),  & \text{if } \Delta_{k_l} < \Delta_{\min}.
  \end{cases}
 \end{equation}
 Since the sequence $(f(x_k))$ converges by Lemma~\ref{lem:fconv}, it follows
 \begin{equation*}
  \lim_{l \to \infty} \left (
  \nu(x_{k_l} , \Delta_{k_l } )
  \min \left\{\Delta_{k_l }, \frac{\nu(x_{k_l} , \Delta_{k_l} ) }{C_H}  \right\} \right ) = 0,
 \end{equation*}
 i.e., it has to hold
 \begin{equation}\label{eq:nuconv}
  \min \left\{ \Delta_{k_l }    ,  \nu(x_{k_l} , \Delta_{k_l} )   \right\} \to 0
 \end{equation}
 as $l \to \infty$. We now distinguish between three cases:\\
 (i) If there exists a subsequence of $(x_{k_l})$ (unrelabeled for simplicity)
 such that the associated $\Delta_{k_l}$ converge to zero,
 then the claim follows immediately from Proposition~\ref{prop:proposition42}.\\
 (ii) If there exists a subsequence of $(x_{k_l})$ (again unrelabeled)
 such that $\Delta_{k_l} \geq \Delta_{\min}$, then \eqref{eq:nudef} and \eqref{eq:nuconv} imply
 $\|g_{k_l}\| \to 0$. In view of \cite[Prop.~2.1.5(b)]{clarke}, this implies $0\in \partial f(\bar x)$ as claimed.\\
 (iii)  If there exists a subsequence of $(x_{k_l})$ (again unrelabeled)
 with $\varepsilon \leq \Delta_{k_l} < \Delta_{\min}$  for some $\varepsilon > 0$,
 then \eqref{eq:nuconv} gives $\nu(x_{k_l }, \Delta_{k_l}) \to 0$.
 We know, however, that the steps  $(x_{k_l})$ are all successful and,
 according to \eqref{eq:rhomod}, this is only the case if
 \begin{equation*}
  \nu (x_{k_l}, \Delta_{k_l}) = \psi(x_{k_l}, \Delta_{k_l})
  \geq \|g_{k_l}\| \Delta_{k_l} \geq \|g_{k_l}\|\, \varepsilon \geq 0.
 \end{equation*}
 Accordingly, $\|g_{k_l}\| \to 0$ holds and we can argue as in the second case to obtain the claim.
\hfill\end{proof}

\begin{remark}
 The proofs of Proposition~\ref{prop:proposition42} and Theorem~\ref{thm:TRconv},
 in particular \eqref{eq:psiconv} and the distinction of cases after \eqref{eq:nuconv},
 do not only show that every accumulation point is C-stationary, but also that, for every convergent subsequence
 $(x_{k_l})$, the \emph{stationarity indicator} $\min\{\|g_{k_l}\|, \psi(x_{k_l}, \Delta_{k_l})\}$
 converges to zero, which is important for practical reasons, as it lays the foundation for
 an implementable termination criterion of the form
 \begin{equation*}
  \min\{\|g_{k_l}\|, \psi(x_{k_l}, \Delta_{k_l}) \} \leq \texttt{TOL}
 \end{equation*}
 with a given tolerance $\texttt{TOL} > 0$.
\end{remark}


\subsection{A Pathological One-Dimensional Example}

A crucial question in the context of Algorithm~\ref{alg:TR} of course concerns the choice of the model function.
For a general non-smooth problem of the form \eqref{eq:p}, a naive choice would be
\begin{equation}\label{eq:wrongmod}
 \widetilde{\phi}(x, \Delta; d) := \max_{g\in \partial f(x)} \dual{g}{d},
\end{equation}
which is the model proposed in \cite[Section~4.1]{qisun}.
However, it turns out that this model is not well suited for the minimization of non-smooth functions, as we will see by means of
the one-dimensional counterexample below. The essential drawback of the model in \eqref{eq:wrongmod}
is that it does not account for neighboorhood information. Thus, it is rather natural to consider
the following model function:
\begin{equation}\label{eq:correctmod}
 \phi(x, \Delta; d) := \max_{g\in \UU(x, \Delta)} \dual{g}{d}
 \quad \text{with}\quad
 \UU(x,\Delta) := \bigcup_{\xi \in B_\Delta(x)} \partial f(\xi).
\end{equation}
A similar model based on the $\varepsilon$-subdifferential as defined in \cite{goldstein} is used in \cite{ayp} in the context of a non-smooth trust-region method.
If $f$ is Bouligand-differentiable and semi-smooth, then one can verify the conditions
in Assumption~\ref{assu:model}(\ref{it:model}) for the model function in \eqref{eq:correctmod} so that the above
convergence analysis applies. The proof thereof is analogous to the ones
of Lemma~\ref{lem:remainder} and \ref{lem:statmeas} below and therefore omitted.
Of course, the model function $\phi$ is much more costly compared to $\widetilde\phi$,
but the following counterexample shows that the simple model in \eqref{eq:wrongmod} might not
suffice. For this purpose, let us define
\begin{equation}\label{eq:objbsp}
 f: \R \to \R, \quad  f: x \mapsto \max\{-a x , -b x , x - (1+b)\},
\end{equation}
where $0 < b < a < \infty$ are given constants. This piecewise affine function is
trivially convex and admits two kinks at $x=0$ and $x=1$. If one applies the trust-region method
with the two different models to this function, the following lemma is obtained. Its
proof is not difficult, but rather technical and therefore postponed to Appendix~\ref{sec:counterex1d}.

\begin{lemma}\label{lem:counterex1d}
 Assume that the parameters and initial values in step~\ref{it:init} of the algorithm satisfy
 \begin{alignat}{3}
  & \beta_1 + \beta_1 \beta_2 < 1, & \quad
  & \eta_1 \geq \Big(\frac{b}{a} - 1\Big)\frac{\beta_1}{\beta_1 \beta_2-1}
  + \frac{b}{a} \label{eq:params}\\
  & x_0 \in \Big(\Big(1 + \frac{\beta_1 \beta_2 - 1}{\beta_1}\Big)^{-1},0\Big), & \quad
  & \Delta_{\min} > \Delta_0 := \frac{\beta_1 \beta_2 - 1}{\beta_1}\,x_0. \label{eq:delta0}
 \end{alignat}
 Then, the sequence of iterates generated by the trust-region algorithm
 performed with the model function $\widetilde\phi$ and $H_k = 0$ converge to $0$, which is \emph{not}
 stationary in any sense (in particular neither Clarke- nor Bouligand-stationary).

 By contrast, if one uses the model function $\phi$ from \eqref{eq:correctmod} instead, then the iterates
 converges to the global minimum at $x = 1$, no matter how the parameters and initial values are chosen.
\end{lemma}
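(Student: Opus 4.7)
\emph{Plan.} The function $f$ is piecewise affine, with slopes $-a$, $-b$, $1$ on $(-\infty,0)$, $(0,1)$, $(1,\infty)$ respectively. Accordingly, $\partial f$ equals $\{-a\}$, $[-a,-b]$, $\{-b\}$, $[-b,1]$, $\{1\}$ on those points/intervals, so $x=1$ is the unique C-stationary point (and the global minimum), while $x=0$ is a non-stationary kink since $0 \notin [-a,-b]$. These observations drive both halves of the lemma.

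For the naive model $\widetilde{\phi}$, which depends only on the current iterate and not on $\Delta_k$, I would proceed by a step-by-step computation. Under the hypothesis $x_0<0$ and $\Delta_0<\Delta_{\min}$, Algorithm~\ref{alg:TR} enters the modified-subproblem branch with $g_0=-a$ and, since $H_0=0$, minimizer $d_0=\Delta_0$. Using $\Delta_0=(\beta_1\beta_2-1)x_0/\beta_1$ and the lower bound on $x_0$ in \eqref{eq:delta0}, one checks $x_0+\Delta_0\in(0,1)$, so that $f(x_0+\Delta_0)=-b(x_0+\Delta_0)$; either the first branch of \eqref{eq:rhomod} applies and yields $\rho_0=(b/a-1)\beta_1/(\beta_1\beta_2-1)+b/a$, which by \eqref{eq:params} is at most $\eta_1$, or the second branch of \eqref{eq:rhomod} forces $\rho_0=0$ directly. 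In either subcase iteration $0$ is a null step, so $x_1=x_0$ and $\Delta_1=\beta_1\Delta_0$. Then $x_1+\Delta_1=\beta_1\beta_2\,x_0\in(x_0,0)$, $f$ is affine on $[x_1,x_1+\Delta_1]$, $\rho_1=1$, and iteration $1$ is successful with $x_2=\beta_1\beta_2\,x_0$. From here I would argue by induction: either $(x_2,\Delta_2)$ is already a $\beta_1\beta_2$-scaled copy of $(x_0,\Delta_0)$, or $\Delta_2=\Delta_{\min}$ and one or several further null-then-success cycles return the algorithm to the inductive hypothesis with $|x|$ contracted by the factor $\beta_1\beta_2<1$. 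Iterating, $|x_k|\to 0$, and the limit $0$ is not C-stationary.

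For the enriched model $\phi$ from \eqref{eq:correctmod} I would appeal to the general convergence theory rather than tracking trajectories. Since $f$ is piecewise affine and hence trivially Bouligand-differentiable and semismooth, Assumption~\ref{assu:model}(\ref{it:model}) can be verified for $\phi$ by arguments analogous to Lemmas~\ref{lem:remainder} and \ref{lem:statmeas} below. Theorem~\ref{thm:TRconv} then guarantees that every accumulation point of $(x_k)$ is C-stationary. Coercivity of $f$ (both tails diverge to $+\infty$) combined with the monotonicity of $(f(x_k))$ from Lemma~\ref{lem:fconv} keeps the iterates bounded, so accumulation points exist; since $x=1$ is the unique C-stationary point, the entire sequence must converge to $1$.

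The main obstacle is clearly the first part. The update rule \eqref{eq:updelta} involves an outer maximum with $\Delta_{\min}$, so after a successful step the trust-region radius may jump back up to $\Delta_{\min}$, and one must verify that the ensuing cycle of null and successful steps always restores a configuration fitting the inductive pattern with contraction factor $\beta_1\beta_2$. Isolating the invariant relation $\Delta=(\beta_1\beta_2-1)x/\beta_1$ between iterate and trust-region radius and carefully tracking its propagation through \eqref{eq:updelta} is the technical crux of the argument.
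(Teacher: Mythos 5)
Your overall strategy coincides with the paper's: for the naive model $\widetilde\phi$ you compute the first two iterations explicitly and then claim a geometric contraction by induction, and for the enriched model $\phi$ you invoke the general convergence theory (verification of Assumption~\ref{assu:model} as in Lemmas~\ref{lem:remainder} and \ref{lem:statmeas}, Theorem~\ref{thm:TRconv}, compactness of the level set, and uniqueness of the C-stationary point $x=1$). The second half of your argument is exactly the paper's and is complete. Your computation of iteration $0$ and $1$ (null step with $\rho_0=\theta\leq\eta_1$ or $\rho_0=0$, then successful step with $\rho_1=1$ and $x_2=\beta_1\beta_2\,x_0$) also agrees with the paper.

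The gap is in the induction for the first half, which you explicitly leave open. The paper's proof establishes, for \emph{all} $k$, the exact pattern $x_{2k}=(\beta_1\beta_2)^k x_0$, $x_{2k+1}=x_{2k}$, $\Delta_{2k}=(\beta_1\beta_2)^k\Delta_0$, $\Delta_{2k+1}=\beta_1(\beta_1\beta_2)^k\Delta_0$, $\rho_{2k}\equiv\theta$, $\rho_{2k+1}\equiv 1$; in particular, after the successful odd step the new radius is taken to be $\beta_2\Delta_{2k+1}=(\beta_1\beta_2)^{k+1}\Delta_0$, so the invariant $\Delta_{2k}=\frac{\beta_1\beta_2-1}{\beta_1}x_{2k}$ is preserved exactly and the two-step cycle repeats forever with contraction factor $\beta_1\beta_2$; the floor at $\Delta_{\min}$ in \eqref{eq:updelta} plays no role in the paper's induction. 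Your alternative branch --- ``or $\Delta_2=\Delta_{\min}$ and one or several further null-then-success cycles return the algorithm to the inductive hypothesis'' --- is asserted without proof and does not hold as stated: if the radius were reset to $\Delta_{\min}>\Delta_0$, the next trial step of length $\Delta_{\min}$ could carry the iterate across $x=0$ (and even past $x=1$), and the radii obtained by subsequently multiplying $\Delta_{\min}$ by powers of $\beta_1$ have no reason ever to satisfy the invariant relation $\Delta=\frac{\beta_1\beta_2-1}{\beta_1}x$ again, so no contraction by $\beta_1\beta_2$ per cycle can be inferred. In short, you have correctly isolated the technical crux but not resolved it, and the resolution you gesture at is neither the paper's nor one that would go through; the missing content is precisely the verification of items (i)--(vi) above for all $k$.
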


\begin{remark}
 We emphasize that the failure of the trust-region method in case of the model in \eqref{eq:wrongmod}
 is not caused by the distinction of cases contained in Algorithm~\ref{alg:TR}.
 It is easy to see that, in both cases $\Delta_k \geq \Delta_{\min}$ and $\Delta_k < \Delta_{\min}$,
 the iteration is the same (unless the algorithm meets one of the kinks)
 and thus, Algorithm~\ref{alg:TR} turns into a standard (non-smooth) trust-region iteration.

 In our opinion, it is remarkable that this one-dimensional counterexample
 shows that a method based on a local model, which does not account for any neighborhood information,
 fails to converge even in case of a convex and piecewise affine objective.
 Of course, this observation is not new and we exemplarily refer to \cite[Section~5.7]{noll},
 where a similar two-dimensional example is discussed.
 However, if the initial radius $\Delta_0$ is chosen slightly different from the setting
 in \eqref{eq:delta0}, then the trust-region algorithm with the model in \eqref{eq:wrongmod}
 will converge to the global minimum at $x=1$. This indicates that, in frequent cases,
 it is not necessary to use an involved model of the form \eqref{eq:correctmod}, while
 simpler local models will suffice. Our algorithmic approach accounts for this observation
 by incorporating the distinction of cases $\Delta_k \gtrless \Delta_{\min}$ into
 the algorithm.
\end{remark}

\section{Composite Functions}\label{sec:composite}

Our aim is to apply Algorithm~\ref{alg:TR} to (discretized) optimal control problems with non-smooth constraints.
In order to conform with the standard notation in optimal control, we denote the optimization
variable by $u$ from now on. Although this causes a slight abuse of notation, we tacitly replace
$x$ by $u$, when referring to the results of Section~\ref{sec:TRgeneral}.
Our general optimal control problem reads as follows:
\begin{equation}\label{eq:optctrl}
 \min_{u\in \R^n} \; f(u) := J(S(u), u),
\end{equation}
where $J: \R^m \times \R^n \to \R$, $m,n\in \N$ is continuously differentiable and $S: \R^n \to \R^m$ is assumed to
be directionally differentiable and locally Lipschitz continuous.
Note that $S$ is Bouligand-differentiable by \cite[Thm.~3.1.2]{scholtes2012introduction}.
In all what follows, we will frequently abbreviate $y := S(u) \in \R^m$.
Given $u\in \R^n$ and $\Delta > 0$, we suppose that we can construct an approximation
of the Bouligand-subdifferential of $S$ satisfying the following

\begin{assumption}\label{assu:approxsubdiff}
 Given $u\in \R^n$ and $\Delta > 0$, the approximation $\GG(u,\Delta) \subset \R^{m\times n}$ of the
 Bouligand-subdifferential is supposed to fulfill the following conditions:\\
 For all $u \in \R^n$ and all $\Delta > 0$, there holds
 \begin{equation}\label{eq:obermenge}
  \bigcup_{\xi \in B_\Delta(u)} \partial_B S(\xi) \subseteq \GG(u,\Delta)
 \end{equation}
 and, if $(u_k, \Delta_k) \to (u, 0)$ with $0\notin \partial f(u)$, then
 \begin{equation}\label{eq:Gapprox}
  \dist(\GG(u_k, \Delta_k), \partial_B S(u)))
  = \sup_{G\in \GG(u_k, \Delta_k)} \inf_{W\in \partial_B S(u)} \|G - W\|_{\R^{m\times n}} \to 0
 \end{equation}
 is valid.
\end{assumption}

With this approximation at hand, we construct our model function as follows:
\begin{equation}\label{eq:modeloptctrl}
 \phi(u, \Delta; d) := \sup_{G \in \GG(u, \Delta)} \dual{G^\top \nabla_y J(y, u) + \nabla_u J(y, u)}{d}.
\end{equation}
This model function allows the following reformulation of the modified trust-region subproblem, which will be useful
for the realization of the algorithm in case of the concrete optimization problem in Section~\ref{sec:optvi}.
Its proof is straightforward and therefore omitted.

\begin{lemma}\label{lem:reform}
 With the model function in \eqref{eq:modeloptctrl},
 the modified trust-region subproblem \eqref{eq:tildeqk} from step~\ref{step:TRmod} is equivalent to the following linear quadratic problem
 in the sense that they admit the same (global) optima:
 \begin{equation}\tag{$\mathfrak{Q}_k$}\label{eq:qkmod}
  \left.
  \begin{aligned}
   \min_{\substack{\zeta \in \R \\ d\in \R^n}}  \quad & \mathfrak{q}_k(d,\zeta)
   := J(y_k, u_k) + \zeta + \frac{1}{2}\, d^\top H_k d\\
   \text{\textup{s.t.}}  \quad & \|d\| \leq \Delta_k, \\
   & \dual{g}{d} \leq \zeta \quad\forall\,  g\in \{G^\top \nabla_y J(y_k, u_k) + \nabla_u J(y_k, u_k) : G \in \GG(u_k, \Delta_k)\} .
  \end{aligned}
  \;\right\}
 \end{equation}
 In addition, if $\bar d_k$ is a global minimizer of \eqref{eq:tildeqk}, then $(\bar d_k, \bar\zeta_k)$ with
 $\bar\zeta_k = \phi(u_k, \Delta_k, ;\bar d_k)$ solves \eqref{eq:qkmod} so that
 $\tilde q_k(\bar d_k) = \mathfrak{q}_k(\bar d_k, \bar \zeta_k)$.

 Moreover, if $(d_k, \zeta_k)$ is feasible for \eqref{eq:qkmod} and satisfies
 \begin{equation}\label{eq:cauchyzeta}
  f(x_k) - \mathfrak{q}_k(d_k, \zeta_k) \geq
  \frac{\mu}{2}\,\psi(x_k, \Delta_k) \,\min\Big\{ \Delta_k, \frac{\psi(x_k, \Delta_k)}{\|H_k\|} \Big\},
 \end{equation}
 then $d_k$ fulfills the modified Cauchy-decrease condition in \eqref{eq:tildecauchy}, too.
\end{lemma}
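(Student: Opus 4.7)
The plan is to view the supremum defining $\phi$ through its epigraph, so that \eqref{eq:qkmod} becomes the natural epigraph lift of \eqref{eq:tildeqk}. First, I would observe that the linear constraints in \eqref{eq:qkmod} are, by definition of $\phi$ in \eqref{eq:modeloptctrl}, equivalent to the single inequality $\zeta \geq \phi(u_k, \Delta_k; d)$. Hence, for every $d$ with $\|d\| \leq \Delta_k$, the smallest admissible $\zeta$ in \eqref{eq:qkmod} is exactly $\phi(u_k, \Delta_k; d)$, and since neither $J(y_k,u_k)$ nor the quadratic term depends on $\zeta$, minimizing $\mathfrak{q}_k$ in $\zeta$ for fixed $d$ yields
\[
\inf_{\zeta} \mathfrak{q}_k(d,\zeta) \;=\; J(y_k,u_k) + \phi(u_k,\Delta_k;d) + \tfrac12 d^\top H_k d \;=\; \tilde q_k(d),
\]
using that $f(u_k) = J(S(u_k),u_k) = J(y_k,u_k)$. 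This identity proves the equivalence of the two optimization problems in terms of optimal values.

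Next, I would deduce the explicit correspondence of optimizers. Given a global minimizer $\bar d_k$ of \eqref{eq:tildeqk}, set $\bar\zeta_k := \phi(u_k, \Delta_k; \bar d_k)$; then $(\bar d_k, \bar\zeta_k)$ is feasible for \eqref{eq:qkmod} by construction and $\mathfrak{q}_k(\bar d_k,\bar\zeta_k) = \tilde q_k(\bar d_k)$. For any other feasible pair $(d,\zeta)$, the reduction above gives $\mathfrak{q}_k(d,\zeta) \geq \tilde q_k(d) \geq \tilde q_k(\bar d_k) = \mathfrak{q}_k(\bar d_k,\bar\zeta_k)$, so $(\bar d_k,\bar\zeta_k)$ is globally optimal for \eqref{eq:qkmod} with the claimed relation between the two objective values.

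Finally, I would establish the Cauchy-decrease assertion by applying the same key inequality in the opposite direction. Any pair $(d_k,\zeta_k)$ feasible for \eqref{eq:qkmod} satisfies $\zeta_k \geq \phi(u_k,\Delta_k;d_k)$, hence $\mathfrak{q}_k(d_k,\zeta_k) \geq \tilde q_k(d_k)$, and therefore
\[
f(u_k) - \tilde q_k(d_k) \;\geq\; f(u_k) - \mathfrak{q}_k(d_k,\zeta_k) \;\geq\; \frac{\mu}{2}\,\psi(u_k,\Delta_k)\,\min\Big\{\Delta_k,\frac{\psi(u_k,\Delta_k)}{\|H_k\|}\Big\},
\]
where the second inequality is the assumed \eqref{eq:cauchyzeta}. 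This is exactly \eqref{eq:tildecauchy}. There is no real difficulty here; the only point requiring a brief word is that $\phi(u_k,\Delta_k;\cdot)$ takes finite values on $B_{\Delta_k}(0)$, which holds because $\GG(u_k,\Delta_k)$ is bounded (being an outer approximation of Bouligand subgradients of the locally Lipschitz map $S$), so the sup defining $\phi$ is attained and the epigraph reformulation is genuinely equivalent rather than only formally so.
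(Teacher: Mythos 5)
Your epigraph-lift argument is correct and is precisely the ``straightforward'' proof that the paper omits: the constraints of \eqref{eq:qkmod} encode $\zeta \geq \phi(u_k,\Delta_k;d)$, partial minimization in $\zeta$ recovers $\tilde q_k$, and the Cauchy-decrease transfer follows from $\mathfrak{q}_k(d_k,\zeta_k) \geq \tilde q_k(d_k)$. (Only your closing remark is slightly overcautious: attainment of the supremum is not needed, finiteness of $\phi$ on the feasible set suffices, and in the intended application $\GG(u_k,\Delta_k)$ is even a finite set.)
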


\begin{remark}
 Note that the optimal solution $(\bar d_k, \bar\zeta_k)$
 of \eqref{eq:qkmod} satisfies the modified Cauchy-decrease condition \eqref{eq:cauchyzeta},
 since $\bar d_k$ does so by Lemma~\ref{lem:cauchyglob}
 and $\tilde q_k(\bar d_k) = \mathfrak{q}_k(\bar d_k, \bar \zeta_k)$.
\end{remark}

Next, we show that the model function in \eqref{eq:modeloptctrl} satisfies the conditions in Assumption~\ref{assu:model}.
To this end, we require the following

\begin{assumption}\label{assu:derivS}
 For every $u\in \R^n$ and every $h\in \R^n$, there exists a $G\in \partial_B S(u)$ so that $S'(u;h) = G\,h$.
\end{assumption}

There is a large class of functions satisfying this assumption such as for instance
semi-smooth functions, as the next lemma shows.

\begin{lemma}
 If $S:\R^n \to \R^m$ is Bouligand differentiable and semi-smooth,
 then Assumption \ref{assu:derivS} is fulfilled.
\end{lemma}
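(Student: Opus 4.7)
Fix $u\in\R^n$ and $h\in\R^n$. The idea is to construct the desired $G$ by following a sequence of differentiability points of $S$ that approach $u$ from the direction $h$, extracting a limit of the Jacobians along the way, and then using semi-smoothness to identify the action of the limit on $h$ with the directional derivative.

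\emph{Step 1: Constructing a suitable approximating sequence.} Pick any null sequence $t_k\downarrow 0$. By Rademacher's theorem the set $\DD_S$ of differentiability points of the locally Lipschitz map $S$ is dense in $\R^n$, so for every $k$ I can choose $u_k\in\DD_S$ with $\|u_k-(u+t_k h)\|\leq t_k^{\,2}$. Setting $h_k:=(u_k-u)/t_k$ gives $u_k=u+t_k h_k$ with $\|h_k-h\|\leq t_k\to 0$, hence $h_k\to h$.

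\emph{Step 2: Extracting a Bouligand subgradient.} Since $S$ is locally Lipschitz around $u$, there exists a neighbourhood on which all Jacobians $S'(\xi)$ at differentiability points $\xi$ are uniformly bounded in norm (this is the classical estimate behind $\partial_B S$ being well-defined and compact). Consequently the matrices $S'(u_k)$ are bounded, and after passing to a subsequence (not relabeled) I obtain $S'(u_k)\to G$ for some $G\in\R^{m\times n}$. As $u_k\in\DD_S$ and $u_k\to u$, the definition \eqref{def:subdiff} of the Bouligand subdifferential yields $G\in\partial_B S(u)$.

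\emph{Step 3: Identifying $Gh$ with $S'(u;h)$.} This is where semi-smoothness enters. In the standard formulation, $S$ semi-smooth at $u$ means that $S$ is directionally differentiable at $u$ and that for any sequences $h'\to h$, $t\downarrow 0$, and any $V\in\partial S(u+t h')$ one has $Vh'\to S'(u;h)$. Applying this to $V_k:=S'(u_k)\in\partial S(u+t_k h_k)$ with $h_k\to h$ and $t_k\downarrow 0$ gives
\begin{equation*}
 S'(u_k)\,h_k \;\longrightarrow\; S'(u;h).
\end{equation*}
On the other hand, since $S'(u_k)\to G$ and $\{S'(u_k)\}$ is bounded while $h_k\to h$, a standard ``matrix times vector'' continuity argument gives $S'(u_k)\,h_k\to G h$. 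Equating the two limits yields $Gh=S'(u;h)$, which is the assertion.

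\emph{Main obstacle.} The one delicate point is selecting the precise form of the semi-smoothness hypothesis so that the limit statement in Step 3 applies directly to the $V_k=S'(u_k)$ obtained in Step 2. Once the standard Qi--Sun formulation is in place the rest is a bookkeeping argument based on Rademacher's theorem and the very definition of $\partial_B S$.
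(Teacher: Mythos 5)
Your proof is correct and follows essentially the same route as the paper: approximate $u+t_kh$ by differentiability points $u_k=u+t_kh_k$ with $h_k\to h$ via Rademacher, extract a limit $G$ of the bounded Jacobians $S'(u_k)$, and use semi-smoothness to identify $Gh$ with $S'(u;h)$. The only cosmetic difference is that you invoke the Qi--Sun form of semi-smoothness ($V_kh_k\to S'(u;h)$ directly), whereas the paper uses the difference-quotient form and then needs the Bouligand differentiability explicitly to pass the difference quotient to $S'(u;h)$; the two are equivalent here.
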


\begin{proof}
 Let $u$ and $h$ be given and $(t_n)$ be an arbitrary null sequence.
 For every $n\in \N$,  Rademacher's theorem implies the existence of $h_n$ such that
 \begin{equation}\label{eq:smoothapprox}
  u + t_n h_n \in \DD_S \quad \text{and} \quad \|h_n - h\| = \OO(t_n).
 \end{equation}
 The semi-smoothness of $S$ then implies
 \begin{equation}\label{eq:semismooth}
  \frac{S(u + t_n h_n) - S(u)}{t_n} - S'(u + t_n h_n)h_n \to 0.
 \end{equation}
 The local Lipschitz continuity of $S$ moreover yields the boundedness of $\{S'(u + t_n h_n)\}$
 so that there exists $G \in \R^{m\times n}$ with $S'(u + t_n h_n) \to G$.
 As $S$ is Bouligand differentiable, \eqref{eq:smoothapprox} in turn implies the convergence of the first addend in
 \eqref{eq:semismooth} to $S'(u;h)$. This establishes the claim.
\hfill\end{proof}

\begin{lemma}\label{lem:remainder}
 Let $(u_k, \Delta_k) \subset \R^n \times \R^+$ be a sequence such that $u_k \to \tilde u$ and $\Delta_k \to 0$.
 Then, the linearization error satisfies
 \begin{equation*}
  \limsup_{k\to\infty} \sup_{d\in B_{\Delta_k}(0)}
  \frac{J(S(u_k + d), u_k+d) - J(S(u_k), u_k) + \phi(u_k, \Delta_k; d)}{\Delta_k}
  \leq 0
 \end{equation*}
 such that the model given by \eqref{eq:modeloptctrl} fulfills Assumption~\ref{assu:model}(\ref{it:remainder}).
\end{lemma}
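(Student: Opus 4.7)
The plan is to decompose the increment $J(S(u_k+d), u_k+d) - J(y_k, u_k)$ into a smooth part, controlled by a first-order Taylor expansion of $J$, and a non-smooth part driven by $S$, which is handled via Lebourg's mean value theorem together with the inclusion~\eqref{eq:obermenge} defining $\GG$. Fix $k$ large enough that, by $u_k \to \tilde u$, $\Delta_k \to 0$, and local Lipschitz continuity of $S$, the points $(y_k + t(S(u_k+d) - y_k),\, u_k + td)$ for all $t \in [0,1]$ and all $d \in B_{\Delta_k}(0)$ lie in a fixed compact neighborhood of $(S(\tilde u), \tilde u)$. Applying the fundamental theorem of calculus to $J$ along this path and using uniform continuity of $\nabla J$ on that neighborhood yields a frozen-coefficient expansion
\[
J(S(u_k+d), u_k+d) - J(y_k, u_k) = \nabla_y J(y_k, u_k) \cdot (S(u_k+d) - y_k) + \nabla_u J(y_k, u_k) \cdot d + r_k(d),
\]
with a remainder satisfying $\sup_{d \in B_{\Delta_k}(0)} |r_k(d)|/\Delta_k \to 0$ as $k \to \infty$.

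Next, I would apply Lebourg's mean value theorem to the scalar Lipschitz function $g_k(u) := \nabla_y J(y_k, u_k) \cdot S(u)$ on the segment $[u_k, u_k+d]$, obtaining some $\xi \in B_{\Delta_k}(u_k)$ and $\zeta \in \partial g_k(\xi)$ with $g_k(u_k+d) - g_k(u_k) = \zeta \cdot d$. Viewing $g_k$ as the composition of $S$ with the fixed linear functional $v \mapsto \nabla_y J(y_k, u_k) \cdot v$, the Clarke chain rule gives $\partial g_k(\xi) \subseteq \{G^\top \nabla_y J(y_k, u_k) : G \in \partial S(\xi)\}$. Since $\partial S(\xi) = \clos{\conv(\partial_B S(\xi))}$ is the closed convex hull of the compact set $\partial_B S(\xi)$, the linear functional $G \mapsto \dual{G^\top \nabla_y J(y_k, u_k)}{d}$ attains its supremum on $\partial_B S(\xi)$. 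Together with the inclusion $\partial_B S(\xi) \subseteq \GG(u_k, \Delta_k)$ from \eqref{eq:obermenge} (valid because $\xi \in B_{\Delta_k}(u_k)$), this yields
\[
\nabla_y J(y_k, u_k) \cdot (S(u_k+d) - y_k) = \zeta \cdot d \leq \sup_{G \in \GG(u_k, \Delta_k)} \dual{G^\top \nabla_y J(y_k, u_k)}{d}.
\]

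Adding the term $\nabla_u J(y_k, u_k) \cdot d$ (which is independent of $G$ and may therefore be absorbed into the supremum) produces the estimate $\nabla_y J(y_k, u_k) \cdot (S(u_k+d) - y_k) + \nabla_u J(y_k, u_k) \cdot d \leq \phi(u_k, \Delta_k; d)$. Combined with the Taylor expansion from the first step this gives $J(S(u_k+d), u_k+d) - J(y_k, u_k) - \phi(u_k, \Delta_k; d) \leq r_k(d)$; dividing by $\Delta_k$, taking the supremum over $d \in B_{\Delta_k}(0)$, and then the limsup as $k \to \infty$ yields the claimed bound and hence verifies Assumption~\ref{assu:model}(\ref{it:remainder}). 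The main obstacle I expect is the careful application of the Clarke chain rule combined with the extreme-point reduction, which lets the abstract subgradient $\zeta$ be rewritten as a transposed Bouligand-subgradient of $S$ at a point in $B_{\Delta_k}(u_k)$ and thus absorbed into $\GG(u_k, \Delta_k)$ via \eqref{eq:obermenge}; the uniform Taylor expansion of $J$ and the absorption of the $\nabla_u J$ term are comparatively routine.
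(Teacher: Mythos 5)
Your proof is correct, but it follows a genuinely different route from the paper's. The paper represents the increment of the composite function as an integral over the directional derivatives, $\int_0^1 \dual{\nabla_y J(\cdot)}{S'(u+\theta d;d)} + \dual{\nabla_u J(\cdot)}{d}\,d\theta$ (via Scholtes' Prop.~3.1.1 and the chain rule for Bouligand-differentiable functions), and then invokes Assumption~\ref{assu:derivS} to write $S'(u+\theta d;d)=G_\theta\,d$ with $G_\theta\in\partial_B S(u+\theta d)\subseteq \GG(u,\Delta)$; the $o(\Delta_k)$ remainder comes from the modulus of continuity of $u\mapsto J'(S(u),u)$ exactly as in your first step. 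You instead freeze the coefficients of $J$ by a Taylor/FTC argument and then treat the non-smooth increment $\dual{\nabla_y J(y_k,u_k)}{S(u_k+d)-S(u_k)}$ by Lebourg's mean value theorem applied to $v\mapsto\dual{\nabla_y J(y_k,u_k)}{S(v)}$, the Clarke chain rule for composition with a linear functional, and the observation that a linear functional's supremum over $\clos{\conv(\partial_B S(\xi))}$ equals its supremum over $\partial_B S(\xi)$, which is then absorbed into $\GG(u_k,\Delta_k)$ via \eqref{eq:obermenge}. The noteworthy payoff of your version is that it does not use Assumption~\ref{assu:derivS} (nor the directional differentiability of $S$) at all: local Lipschitz continuity of $S$ and the inclusion \eqref{eq:obermenge} suffice, so the remainder-term property holds slightly more generally than the paper's proof suggests. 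The paper's argument, on the other hand, stays entirely within the B-differentiability framework already set up for the application and avoids invoking Lebourg's theorem and the generalized-Jacobian chain rule. Both arguments produce the same uniform-in-$d$ estimate and hence the same conclusion.
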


\begin{proof}
 Let $u\in \R^n$, $\Delta > 0$, and $d\in B_\Delta(0)$ be arbitrary.
 By \cite[Prop.~3.1.1]{scholtes2012introduction} and the chain rule for Bouligand-differentiable functions, we have
 \begin{equation*}
 \begin{aligned}
  & J(S(u+d), u + d) - J(S(u),u)\\
  & =  \int_0^1 \dual{\nabla_y J(S(u+\theta d), u+ \theta d)}{S'(u+\theta d; d)}
  + \dual{\nabla_u J(S(u+\theta d), u+ \theta d)}{d}\,d\theta.
 \end{aligned}
 \end{equation*}
 By Assumption~\ref{assu:derivS}, for every $\theta \in [0,1]$, there exists a $G_\theta \in \partial_B S(u + \theta d)$
 such that $G_\theta\, h= S'(u+\theta d; d)$. This, together with \eqref{eq:obermenge},
 the definition of our model $\phi$, and $\|d\|\leq \Delta$, yields
 \begin{equation*}
 \begin{aligned}
  & J(S(u+d), u + d) - J(S(u),u)\\
  & = \int_0^1 \dual{G_\theta^\top\nabla_y J(S(u+\theta d), u+ \theta d)
  + \nabla_u J(S(u+\theta d), u+ \theta d)}{d}\,d\theta\\
  & \leq \phi(u, \Delta; d)\\[-1ex]
  &\quad + \sup_{G\in \cup_{\xi \in B_\Delta(u)} \partial_B S(\xi)} \|G\|\,
  \int_0^1 \|J'(S(u + \theta d), u + \theta d) - J'(S(u), u)\|\,d\theta \,\Delta.
 \end{aligned}
 \end{equation*}
 Now, let $(u_k, \Delta_k)$ be the sequence from the statement of the lemma and denote by
 $\tilde L > 0$ and $\widetilde\UU\subset \R^n$ the local Lipschitz constant of $S$ at $\tilde u$
 and the associated neighborhood of local Lipschitz continuity, respectively.
 Then, for $K\in \N$ sufficiently large, we have $B_{\Delta_k}(u_k) \subset \widetilde \UU$ and therefore
 \begin{equation*}
  \sup_{G\in \cup_{\xi \in B_{\Delta_k}(u_k)} \partial_B S(\xi)} \|G\| \leq \tilde L
  \quad \forall\, k \geq K.
 \end{equation*}
 Furthermore, the uniform continuity of $u \mapsto J'(S(u), u)$ on $\operatorname{cl}(\widetilde\UU)$
 and $(u_k, \Delta_k) \to (\tilde u, 0)$ imply
 \begin{equation*}
  \sup_{d \in B_{\Delta_k}(0)} \|J'(S(u_k + d), u_k + d) - J'(S(u_k), u_k)\|
  \to 0 \quad \text{as } k \to \infty.
 \end{equation*}
 Collecting all findings, we arrive at
 \begin{equation*}
 \begin{aligned}
  & \sup_{d\in B_{\Delta_k}(0)}
  \frac{J(S(u_k + d), u_k+d) - J(S(u_k), u_k) - \phi(u_k, \Delta_k; d)}{\Delta_k}\\
  &\qquad\qquad \leq \tilde L \, \sup_{d \in B_{\Delta_k}(0)}
  \int_0^1 \|J'(S(u + \theta d), u + \theta d) - J'(S(u), u)\|\,d\theta
  \to 0,
 \end{aligned}
 \end{equation*}
 which implies the assertion.
\hfill\end{proof}

\begin{lemma}\label{lem:statmeas}
 Let $\psi$ denote the stationarity measure from \eqref{eq:normersatz}, i.e.,
 \begin{equation*}
  \psi(u, \Delta)  := - \min_{\|h\|\leq 1} \phi(u, \Delta; h),
 \end{equation*}
 and $(u_k, \Delta_k)\subset \R^n\times \R^+$ so that
 \begin{equation}\label{eq:psitonull}
  u_k \to u, \quad \Delta_k \to 0, \quad\text{and}\quad \psi(u_k, \Delta_k) \to 0.
 \end{equation}
 Then $0\in \partial f(u)$ holds true such that the model from \eqref{eq:modeloptctrl} satisfies
 Assumption~\ref{assu:model}(\ref{it:cstat}). Herein $f$ again denotes the reduced objective, i.e., $f(u) = J(S(u), u)$.
\end{lemma}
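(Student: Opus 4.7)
The plan is to argue by contradiction: suppose $0\notin\partial f(u)$, so that Assumption~\ref{assu:approxsubdiff} delivers the one-sided Hausdorff convergence $\dist(\GG(u_k,\Delta_k),\partial_B S(u))\to 0$. My aim is to combine this with $\psi(u_k,\Delta_k)\to 0$ in order to exhibit $0$ as an element of $\partial f(u)$ and obtain the desired contradiction.

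The first step is to rewrite the stationarity measure as a distance. With the shorthand $c(G,u):=G^{\top}\nabla_y J(S(u),u)+\nabla_u J(S(u),u)$, I set $C_k:=\{c(G,u_k):G\in\conv\GG(u_k,\Delta_k)\}$, which is convex and compact (the compactness of $\GG(u_k,\Delta_k)$ for large $k$ follows from \eqref{eq:Gapprox} together with compactness of $\partial_B S(u)$, itself a consequence of the local Lipschitzness of $S$). Since $c(\cdot,u_k)$ is affine in $G$, the supremum defining $\phi(u_k,\Delta_k;\cdot)$ is unchanged when $\GG(u_k,\Delta_k)$ is replaced by its convex hull. Letting $c_k^{*}$ denote the metric projection of $0$ onto $C_k$ and $d_k^{*}:=-c_k^{*}/\|c_k^{*}\|$ (the case $c_k^{*}=0$ being trivial), the variational inequality $\dual{c_k^{*}}{c-c_k^{*}}\geq 0$ for all $c\in C_k$ yields $\phi(u_k,\Delta_k;d_k^{*})\leq -\|c_k^{*}\|$, and hence $\psi(u_k,\Delta_k)\geq\dist(0,C_k)$. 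From $\psi(u_k,\Delta_k)\to 0$ I therefore extract $G_k\in\conv\GG(u_k,\Delta_k)$ with $c(G_k,u_k)\to 0$.

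The second step lifts this limit back to $\partial_B S(u)$. By Carath\'eodory I write $G_k=\sum_i\lambda_{k,i}G_{k,i}$ with $G_{k,i}\in\GG(u_k,\Delta_k)$; using \eqref{eq:Gapprox}, I pick $W_{k,i}\in\partial_B S(u)$ with $\|G_{k,i}-W_{k,i}\|\to 0$ and set $W_k:=\sum_i\lambda_{k,i}W_{k,i}\in\conv\partial_B S(u)$. Compactness of $\partial_B S(u)$ (and hence of its convex hull) allows me to pass to a subsequence with $W_k\to W\in\conv\partial_B S(u)$. Continuity of $\nabla J$ together with $S(u_k)\to S(u)$ then transports $c(G_k,u_k)\to 0$ into $c(W,u)=0$.

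The decisive step, which I expect to be the main obstacle, is to conclude $0\in\partial f(u)$ from $c(W,u)=0$ with $W\in\conv\partial_B S(u)$. The usual Clarke chain rule supplies only the \emph{opposite} inclusion $\partial f(u)\subseteq\clos{\conv(\{c(V,u):V\in\partial_B S(u)\})}$, which is useless here. Instead, I must prove the ``from below'' inclusion $\{c(V,u):V\in\partial_B S(u)\}\subseteq\partial_B f(u)$, which I obtain by restricting to points of classical differentiability: for any $V\in\partial_B S(u)$ there exist $u_n\to u$ with $u_n\in\DD_S$ and $S'(u_n)\to V$; since $J$ is $C^1$, such $u_n$ automatically belong to $\DD_f$ and $\nabla f(u_n)=S'(u_n)^{\top}\nabla_y J(S(u_n),u_n)+\nabla_u J(S(u_n),u_n)\to c(V,u)$, placing $c(V,u)$ in $\partial_B f(u)$. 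Taking convex hulls and closures preserves containment in $\partial f(u)=\clos{\conv(\partial_B f(u))}$, so $0=c(W,u)\in\partial f(u)$, contradicting the initial assumption.
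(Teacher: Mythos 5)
Correct, and essentially the same proof as the paper's: both rest on the projection/duality identity bounding $\psi(u_k,\Delta_k)$ from below by the distance from $0$ to the closed convex hull of the model gradients, on the inclusion $\{G^\top\nabla_y J(y,u)+\nabla_u J(y,u):G\in\partial_B S(u)\}\subseteq\partial f(u)$ obtained by passing through differentiability points of $S$, and on assumption \eqref{eq:Gapprox}. The only difference is cosmetic: the paper propagates $\dist(0,\partial f(u))\geq\varepsilon$ forward to the uniform bound $\psi(u_k,\Delta_k)\geq\varepsilon/2$, whereas you pull $\psi(u_k,\Delta_k)\to 0$ back through a Carath\'eodory/compactness extraction to exhibit $0$ as an element of $\partial f(u)$; both yield the same contradiction.
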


\begin{proof}
 We argue by contradiction and assume that there is $\varepsilon > 0$ so that
 \begin{equation}\label{eq:distsubdiff}
  \dist(0, \partial f(u)) \geq \varepsilon.
 \end{equation}
 Let us denote the set of points, where $S$ and $f$ are differentiable by $\DD_S$ and $\DD_f$, respectively.
 Since $J$ is continuously differentiable, the chain rule implies $\DD_S \subseteq \DD_f$
 and, by Rademacher's theorem, we have $\lambda^n(\DD_f\setminus \DD_S) = 0$.
 Therefore, \cite[Thm.~2.5.1]{clarke} and the continuous differentiability of $J$ imply
 \begin{equation*}
 \begin{aligned}
  & \{G^\top \nabla_y J(y, u) + \nabla_u J(y, u) : G \in \partial_B S(u)\} \\
  & \subset \clos{\conv\big(g\in \R^n : \, \exists\, (u_n)\subset \DD_S :
  u_n \to u, \, S'(u_n)^\top\nabla_y J(y_n, u_n) + \nabla_u J(y_n, u_n) \to g \big)}\\
  & = \partial f(u)
 \end{aligned}
 \end{equation*}
 Therefore, due to Assumption~\eqref{eq:Gapprox}, there exist $K\in \N$ such that, for all $k\geq K$, it holds
 \begin{equation*}
 \begin{aligned}
  & \{G^\top \nabla_y J(y_k, u_k) + \nabla_u J(y_k, u_k) : G \in \GG(u_k, \Delta_k)\} \\
  & \quad \subset
  \{G^\top \nabla_y J(y, u) + \nabla_u J(y, u) : G \in \partial_B S(u)\} + B_{\varepsilon/2}(0)
  \subset \partial f(u) + B_{\varepsilon/2}(0).
 \end{aligned}
 \end{equation*}
 Since $\partial f(u)$ is convex, this in combination with \eqref{eq:distsubdiff}
 implies $\dist(\CC_k, 0) \geq \varepsilon/2$, where we abbreviated
 \begin{equation*}
  \CC_k := \clos{\conv\big(\{G^\top \nabla_y J(y_k, u_k) + \nabla_u J(y_k, u_k) :
  G \in \GG(u_k, \Delta_k)\}\big)}.
 \end{equation*}
 Next, let us define $\bar g$ as unique solution of the following VI:
 \begin{equation*}
  \bar g\in \CC_k, \quad \dual{\bar g}{g - \bar g} \geq 0 \quad \forall g\in \CC_k.
 \end{equation*}
 Using this VI in combination with $\dist(\CC_k, 0) \geq \varepsilon/2$ results in
 \begin{equation*}
 \begin{aligned}
  \psi(u_k, \Delta_k)
  & = \max_{\|h\|\leq 1}\Big(\inf_{G\in \GG(u_k, \Delta_k)}
  \dual{G^\top \nabla_y J(y_k, u_k) + \nabla_u J(y_k, u_k)}{-h}\Big)\\
  & \geq \inf_{G\in \GG(u_k, \Delta_k)} \sdual{G^\top \nabla_y J(y_k, u_k) + \nabla_u J(y_k, u_k)}{\frac{\bar g}{\|\bar g\|}} \\
  & \geq \inf_{g\in \CC_k} \frac{\dual{g}{\bar g}}{\|\bar g\|} \geq  \|\bar g\|
  \geq \frac{\varepsilon}{2} \qquad \forall\, k \geq K.
 \end{aligned}
 \end{equation*}
 This however contradicts the last assumptions in \eqref{eq:psitonull}, which gives the claim.
\hfill\end{proof}

We collect the findings of this section in the following

\begin{corollary}\label{cor:convcomp}
 Under Assumptions~\ref{assu:approxsubdiff} and \ref{assu:derivS}, every accumulation point of the sequence of iterates
 generated by our non-smooth trust-region algorithm applied to \eqref{eq:optctrl} with the model function in \eqref{eq:modeloptctrl}
 is a C-stationary point of \eqref{eq:optctrl}.
\end{corollary}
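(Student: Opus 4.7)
The plan is to verify that the model function $\phi$ defined in \eqref{eq:modeloptctrl} fulfils all clauses of Assumption~\ref{assu:model} for the reduced objective $f(u) = J(S(u), u)$, and then to invoke Theorem~\ref{thm:TRconv}. Since the two heaviest conditions have already been discharged in Lemmas~\ref{lem:remainder} and \ref{lem:statmeas}, the corollary amounts to a packaging exercise.

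First, Assumption~\ref{assu:f} is clear: $f$ is locally Lipschitz as the composition of the $C^1$ map $J$ with the locally Lipschitz map $S$. For the subgradient computability required by Assumption~\ref{assu:model}(1), I would extract some $G \in \partial_B S(u)$ — an element that is in any case available through the construction of the approximation $\GG(u,\Delta)$ — and form
\begin{equation*}
 g := G^\top \nabla_y J(S(u),u) + \nabla_u J(S(u),u).
\end{equation*}
This $g$ lies in $\partial f(u)$ by the chain-rule inclusion
\begin{equation*}
 \{G^\top \nabla_y J(y,u) + \nabla_u J(y,u) : G\in \partial_B S(u)\} \subset \partial f(u),
\end{equation*}
which is derived from \cite[Thm.~2.5.1]{clarke} together with the continuous differentiability of $J$, exactly as at the beginning of the proof of Lemma~\ref{lem:statmeas}.

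For Assumption~\ref{assu:model}(\ref{it:semi}), the map $d \mapsto \phi(u,\Delta;d)$ is by its very definition in \eqref{eq:modeloptctrl} a pointwise supremum of linear functionals in $d$, hence positively homogeneous, convex, and in particular lower semicontinuous. The stationarity indicator property in Assumption~\ref{assu:model}(\ref{it:cstat}) is precisely Lemma~\ref{lem:statmeas}, and the remainder term property in Assumption~\ref{assu:model}(\ref{it:remainder}) is precisely Lemma~\ref{lem:remainder}. Thus all parts of Assumption~\ref{assu:model} are in place, and under the standing Hessian bound of Assumption~\ref{assu:hesse} the general convergence result Theorem~\ref{thm:TRconv} applies verbatim and delivers the claim.

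I do not anticipate any genuine obstacle here: the substantive analytical content sits in Lemmas~\ref{lem:remainder} and \ref{lem:statmeas}. The only care required is to align each hypothesis of the general algorithm with its verified counterpart in the composite setting and, in particular, to observe that a Clarke subgradient of $f$ can always be constructed from a single Bouligand subgradient of $S$ via the chain rule — which is consistent with the natural way the set $\GG(u,\Delta)$ is built in practice.
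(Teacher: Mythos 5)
Your proposal is correct and follows essentially the same route as the paper: the paper's proof likewise consists of observing that Lemmas~\ref{lem:remainder} and \ref{lem:statmeas} (together with the evident structure of the model in \eqref{eq:modeloptctrl}) verify Assumption~\ref{assu:model}, and then invoking Theorem~\ref{thm:TRconv}. Your additional remarks on the subgradient construction via the chain-rule inclusion and on the lower semicontinuity of the pointwise supremum are accurate and merely make explicit what the paper leaves implicit.
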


\begin{proof}
 As shown in the above lemmata, Assumption~\ref{assu:model} is fulfilled, provided that Assumptions \ref{assu:approxsubdiff} and \ref{assu:derivS} hold true.
 Therefore, Theorem~\ref{thm:TRconv} gives the assertion.
\hfill\end{proof}

\section{Optimization of Variational Inequalities of the Second Kind}\label{sec:optvi}

We now focus on the following class of nonsmooth optimization problems:
\begin{equation}\tag{P$_{\textup{VI}}$}\label{eq:piv}
 \left.
 \begin{aligned}
  \min_{y,u\in \R^n} & \quad J(y,u)\\
  \text{s.t.} & \quad \dual{A y}{v-y} + \|v\|_1 - \|y\|_1 \geq \dual{u}{v-y} \quad \forall\, v\in \R^n,
 \end{aligned}
 \qquad\right\}
\end{equation}
where $J: \R^n \times \R^n \to \R$ is smooth, $A\in \R^{n\times n}$ is symmetric and positive definite, and
$\|.\|_1$ denotes the 1-norm, i.e., $\|v\|_1 = \sum_{i=1}^n |v_i|$. Note that the constraints are given in form of a variational inequality of the second kind.

In the next proposition we summarize some known results about \eqref{eq:piv}. For more details on this, we refer to \cite{delosreyesmeyer}.

\begin{proposition}\label{prop:vi}
 Let $u\in \R^n$ be given. Then there holds:
 \begin{itemize}
  \item There exists a unique solution $y\in \R^n$ of the VI in \eqref{eq:piv}, i.e.,
  \begin{equation}\tag{VI}\label{eq:vi}
   \dual{A y}{v-y} + \|v\|_1 - \|y\|_1 \geq \dual{u}{v-y} \quad \forall\, v\in \R^n.
  \end{equation}
  \item $y$ is the solution of \eqref{eq:vi}, iff there exists a $q\in \R^n$ such that
  \begin{equation}\label{eq:compl}
   A y + q = u, \quad y_i\, q_i = |y_i|, \quad |q_i| \leq 1, \quad i = 1, ..., n
  \end{equation}
  \item The solution mapping $S: \R^n \ni u \mapsto y \in \R^n$ is globally Lipschitz continuous
  and directionally differentiable.
  Its directional derivative $\eta = S'(u;h)$ at $u$ in direction $h\in \R^n$ is given by
  the unique solution of
  \begin{equation}\label{eq:rablvi}
   \eta \in \KK(y), \quad \dual{A\eta}{v - \eta} \geq \dual{h}{v-\eta} \quad \forall\, v\in \KK(y),
  \end{equation}
  where
  \begin{equation}\label{eq:critcone}
   \KK(y) := \{v\in \R^n: v_i = 0, \text{ if } |q_i|<1, \; v_i\,q_i \geq 0, \text{ if } y_i = 0
   \wedge |q_i| = 1\}.
  \end{equation}
 \end{itemize}
\end{proposition}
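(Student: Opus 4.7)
The plan is to handle the three assertions in order, reducing (i) and (ii) to standard convex subdifferential calculus and concentrating the real work on the characterization of $S'(u;h)$ in (iii). For part (i), the VI \eqref{eq:vi} is the necessary and sufficient first-order optimality condition of the strongly convex unconstrained minimization problem $\min_{y \in \R^n} \tfrac{1}{2}\dual{Ay}{y} + \|y\|_1 - \dual{u}{y}$; symmetry and positive definiteness of $A$ together with continuity of $\|\cdot\|_1$ ensure the existence of a unique minimizer, which is then the unique solution of \eqref{eq:vi}. For part (ii), I would rewrite the VI equivalently as $u - Ay \in \partial\|\cdot\|_1(y)$ and compute this convex subdifferential componentwise; setting $q := u - Ay$, the membership $q \in \partial\|\cdot\|_1(y)$ reads exactly as $|q_i| \leq 1$ and $q_i y_i = |y_i|$ for every $i$, which is \eqref{eq:compl}, while the converse follows by reading the argument backwards.

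For part (iii), Lipschitz continuity is obtained by testing the VIs for two inputs $u^1, u^2$ with the solution corresponding to the other input; the $\|\cdot\|_1$-contributions cancel and coercivity of $A$ yields $\|S(u^1) - S(u^2)\| \leq \lambda_{\min}(A)^{-1}\|u^1 - u^2\|$. Setting $y := S(u)$, $q := u - Ay$, $y_t := S(u + th)$, $q_t := u + th - Ay_t$, and $\eta_t := (y_t - y)/t$, the same estimate gives $\|\eta_t\| \leq \lambda_{\min}(A)^{-1}\|h\|$, so along a suitable null sequence $t_n \downarrow 0$ we may assume $\eta_{t_n} \to \eta$ for some $\eta \in \R^n$. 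Continuity of $S$ together with \eqref{eq:compl} lets me verify $\eta \in \KK(y)$ index by index: if $|q_i| < 1$ then $y_i = 0$ and, for small $t$, $|q_{t,i}| < 1$ and hence $y_{t,i} = 0$, so $\eta_{t,i} = 0$ and $\eta_i = 0$; if $y_i = 0$ with $|q_i| = 1$, then $q_{t,i} \to q_i \in \{-1,1\}$ while $q_{t,i} y_{t,i} = |y_{t,i}| \geq 0$ force $q_i \eta_{t,i} \geq 0$ in the limit.

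The step I expect to require the most care is the passage to the limit in the VI to recover \eqref{eq:rablvi}. Given $v \in \KK(y)$, I would test the VI for $y_t$ with $w := y + tv$. The sign and sparsity conditions encoded in $\KK(y)$ imply the exact identity $\|y + tv\|_1 - \|y\|_1 = t\dual{q}{v}$ for all sufficiently small $t > 0$, while the VI for $y$ tested with $y_t$ provides the one-sided bound $\|y_t\|_1 - \|y\|_1 \geq t\dual{q}{\eta_t}$. Combining these two facts, eliminating $q$ via $q = u - Ay$, and dividing by $t$ produces the inequality $\dual{A\eta_t}{v - \eta_t} \geq \dual{h}{v - \eta_t}$, which passes to the limit to yield \eqref{eq:rablvi}. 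Strict monotonicity of $A$ on the closed convex cone $\KK(y)$ makes the solution of \eqref{eq:rablvi} unique, so the entire family $\{\eta_t\}_{t>0}$ must converge to $\eta$, which establishes directional differentiability and completes the proof.
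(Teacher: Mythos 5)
Your proposal is correct. Note, however, that the paper does not prove Proposition~\ref{prop:vi} at all: it explicitly presents these facts as known results and refers to \cite{delosreyesmeyer} for the details, so there is no in-paper argument to compare against. Your self-contained proof follows the classical route: parts (i) and (ii) via the strongly convex energy $\tfrac12\dual{Ay}{y}+\|y\|_1-\dual{u}{y}$ and the componentwise subdifferential of the $1$-norm, the Lipschitz estimate by cross-testing the two VIs (this is exactly the computation the paper later carries out in Lemma~\ref{lem:Slip}), and the conical derivative via a Mignot-type difference-quotient argument. I checked the delicate steps: the identity $\|y+tv\|_1-\|y\|_1=t\dual{q}{v}$ for $v\in\KK(y)$ and $t$ small (where ``small'' depends on $v$, which is harmless since $v$ is fixed) holds componentwise in all three index regimes; the one-sided bound $\|y_t\|_1-\|y\|_1\ge t\dual{q}{\eta_t}$ combines with it in the right direction to give $\dual{A\eta_t}{v-\eta_t}\ge\dual{h}{v-\eta_t}$; and uniqueness of the solution of \eqref{eq:rablvi} on the closed convex cone $\KK(y)$ upgrades subsequential convergence of $\eta_t$ to convergence of the whole family. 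The only cosmetic slip is the phrase ``$q_i\eta_{t,i}\ge 0$ in the limit'': what you actually have is $q_{t,i}\,\eta_{t,i}=|y_{t,i}|/t\ge 0$ for each $t$, from which $q_i\eta_i\ge 0$ follows by passing to the limit; this is easily repaired and does not affect the argument.
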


Thanks to these properties, we may formulate problem \eqref{eq:piv} in reduced form as
\begin{equation}
  \min_{u \in \R^n} \; f(u):=J(S(u),u),
\end{equation}
so that a problem of the form \eqref{eq:optctrl} is obtained. Our aim in the following is to verify the hypotheses on the
general problem \eqref{eq:optctrl}, i.e., Assumptions~\ref{assu:approxsubdiff} and \ref{assu:derivS}.
For this purpose, we first have to charactrize the Bouligand-subdifferential of $S$, which is addressed in the next subsection.

\subsection{Characterization of the Bouligand-Subdifferential}

Given $u\in \R^n$ with $y = S(u)$, we define the following sets
\begin{subequations}\label{eq:sets}
\begin{alignat}{2}
 \AA &:= \{i \in \{1, ..., n\} : y_i = 0\} & \quad & \text{(active set)} \label{eq:active}\\
 \AA_s &:= \{i \in \{1, ..., n\} : |q_i| < 1 \} & & \text{(stongly active set)} \\
 \II &:= \{i \in \{1, ..., n\} : y_i \neq 0\} & & \text{(inactive set)} \label{eq:inactive}\\
 \BB &:= \{i \in \{1, ..., n\} : y_i = 0 \wedge |q_i| = 1\} & & \text{(biactive set)}.
\end{alignat}
\end{subequations}
Note that these sets depend on $y$ and thus indirectly on $u$
so that it would be more appropriate to write $\AA(y)$ or $\AA(u)$ etc., but,
for the sake of readability, we suppress this dependency throughout this subsection.
This will be different in Section~\ref{sec:approxdS}, where we have to distinguish between the active sets in different points.
Note that, because of the complementarity like relation in \eqref{eq:compl}, one has
$\AA_s \subset \AA$, and therefore $\AA = \AA_s \cup \BB$.

\begin{lemma}\label{lem:Sglatt}
 $S$ is differentiable at $u$ iff $\KK(y) = \{v\in \R^n: v_i = 0, \text{ if } y_i = 0\}$.
\end{lemma}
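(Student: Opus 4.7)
The plan is to rephrase differentiability of $S$ at $u$ in terms of linearity of the directional derivative $h \mapsto S'(u;h)$, and then to identify precisely when this linearity holds. Since $S$ is locally Lipschitz and directionally differentiable, it is Bouligand differentiable by \cite[Thm.~3.1.2]{scholtes2012introduction}, so Fréchet differentiability at $u$ is equivalent to $h \mapsto S'(u;h)$ being linear. My first step would be the elementary set-theoretic observation that, since $\{i : y_i = 0\} = \AA = \AA_s \cup \BB$, direct inspection of \eqref{eq:critcone} yields
\begin{equation*}
 \KK(y) = \{v \in \R^n : v_i = 0 \text{ if } y_i = 0\} \quad\Longleftrightarrow\quad \BB = \emptyset.
\end{equation*}

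For the direction ``$\BB = \emptyset \Rightarrow S$ differentiable,'' the critical cone $\KK(y)$ equals the linear subspace $V := \{v : v_i = 0,\, i \in \AA\}$. Testing the VI \eqref{eq:rablvi} with $v = \eta \pm w$ for $w \in V$ converts it into the variational equation $\dual{A\eta - h}{w} = 0$ for all $w \in V$. Since $A$ is symmetric positive definite, the associated bilinear form restricted to $V$ is invertible, so $\eta$ depends linearly on $h$, giving Fréchet differentiability of $S$ at $u$.

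For the converse, assuming $S$ differentiable at $u$, I would exploit the fact that linearity of $h \mapsto \eta(h) := S'(u;h)$ forces $\KK(y)$ to be symmetric about the origin. Given any $\eta_0 \in \KK(y)$, the ansatz $h := A\eta_0$ makes $\eta = \eta_0$ a trivial solution of \eqref{eq:rablvi}, since then both sides of the inequality vanish; by uniqueness of the tangent VI, $\eta(A\eta_0) = \eta_0$. Linearity then yields $-\eta_0 = \eta(-A\eta_0) \in \KK(y)$, hence $\KK(y) = -\KK(y)$. A convex cone that coincides with its negative is a linear subspace, and by the opening observation this forces $\BB = \emptyset$ and hence the claimed identity for $\KK(y)$.

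The main obstacle is the converse implication: one must systematically probe every element of $\KK(y)$ by some admissible direction $h$. The choice $h = A\eta_0$ is the natural one because it annihilates the VI residual, so that any $\eta_0 \in \KK(y)$ is realized as a directional derivative; once this is in hand, the rest is routine bookkeeping with the active-set decomposition $\AA = \AA_s \cup \BB$ and the standard fact that a closed convex cone equal to its own negative is a linear subspace.
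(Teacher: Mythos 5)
Your argument is correct and follows essentially the same route as the paper: the forward direction uses that $\KK(y)$ becomes a linear subspace so that the tangent VI reduces to a linear variational equation, and the converse uses the ansatz $h = A\eta_0$ to show every $\eta_0 \in \KK(y)$ is attained as a directional derivative, whence linearity forces $\KK(y) = -\KK(y)$ and the sign condition on the biactive set collapses. The only cosmetic difference is that you route the conclusion through the intermediate equivalence with $\BB = \emptyset$, whereas the paper deduces $v_i = 0$ on $\BB$ directly for all $v \in \KK(y)$.
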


\begin{proof}
 It is clear that, if $\KK(y)$ takes the form stated in the Lemma, then $\KK(y)$ is a linear subspace
 and, as a convex projection on a linear subspace, $S'(u,.)$ is a linear mapping so that
 $S$ is differentiable at $u$.

 To show the converse assertion, we first show that
 \begin{equation}\label{eq:rablsurj}
  S'(u;\R^n) = \KK(y).
 \end{equation}
 By \eqref{eq:rablvi}, we already have $S'(u;\R^n) \subset \KK(y)$. To see the reverse inclusion,
 let $z\in \KK(y)$ be arbitrary and set $h = A\,z$.
 Then we trivially obtain $\dual{Az}{v-z} = \dual{h}{v-z}$ for all $v\in \KK(y)$ so that $z = S'(u;h)$,
 which shows \eqref{eq:rablsurj}.
 Moreover, if $S$ is differentiable so that  $h \mapsto S'(u;h)$ is linear, then $S'(u;\R^n)$ becomes
 a linear subspace and, by \eqref{eq:rablsurj}, so does $\KK(y)$.
 Therefore $v\in \KK(y)$ implies $-v\in \KK(y)$, which, due to \eqref{eq:critcone} yields
 \begin{equation*}
  0\leq v_i\,q_i \leq 0 \quad \forall\, i\in \BB = \{j \in \{1, ..., n\} : y_j = 0 \wedge |q_j| = 1\}.
 \end{equation*}
 Since $q_i \neq 0$ in $\BB$, this yields $v_i = 0$ in $\BB$, which, together with $v_i = 0$ in $\AA_s$,
 see \eqref{eq:critcone}, finally gives $v_i = 0$ in $\BB \cup \AA_s = \AA$ as claimed.
\hfill\end{proof}

Now, we are in the position to give a precise characterization of the Bouligand-subdifferential of $S$.
To this end, we introduce the following

\begin{definition}
 Let $\NN \subset \{1, ..., n\}$ be an index set.
 Then we define the matrices $A(\NN) \in \R^{n\times n}$ and $\chi(\NN)\in \R^{n\times n}$ by
 \begin{equation*}
  A(\NN)_{ij} :=
  \begin{cases}
   A_{ij}, & \text{if } i,j \in \{1, ..., n\}\setminus \NN,\\
   0, & \text{if } i \vee j \in \NN, i\neq j,\\
   1, & \text{if } i=j \in \NN,
  \end{cases}
   \quad
  \chi(\NN)_{ij} :=
  \begin{cases}
   1, & i=j \in \{1, ..., n\}\setminus \NN,\\
   0, & \text{otherwise}.
  \end{cases}
 \end{equation*}
\end{definition}

\begin{theorem}\label{thm:boulimatrix}
 Let $u\in \R^n$ be fixed, but arbitrary, and let $y = S(u)$. Then there holds
 \begin{equation}\label{eq:boulimatrix}
  \begin{aligned}
  \partial_B S(u)
  = \{ A(\AA_s\cup\BB_0)^{-1}\chi(\AA_s\cup\BB_0) :  \BB_0\subseteq \BB \}.
 \end{aligned}
 \end{equation}
\end{theorem}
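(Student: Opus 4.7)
The proof I would attempt proceeds by showing both inclusions in \eqref{eq:boulimatrix} after first deriving an explicit matrix representation of $S'(u)$ at any point of differentiability. By Lemma~\ref{lem:Sglatt}, $S$ is differentiable at $u$ precisely when $\BB = \emptyset$, in which case $\KK(y)$ is the linear subspace $\{v : v_i = 0 \text{ for } i\in \AA\} = \{v : v_i = 0 \text{ for } i\in \AA_s\}$. The VI \eqref{eq:rablvi} then reduces to the linear system $A_{\II\II}\,\eta_\II = h_\II$ together with $\eta_\AA = 0$, and a direct verification against the block structure of $A(\AA_s)$ (identity block on $\AA_s$, copy of $A_{\II\II}$ on $\II$) and $\chi(\AA_s)$ shows that $S'(u) = A(\AA_s)^{-1}\chi(\AA_s)$. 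This reduces the theorem to identifying all matrices obtainable as limits of $A(\AA_s(u_n))^{-1}\chi(\AA_s(u_n))$ along sequences $u_n\to u$ at which $S$ is differentiable.

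For the inclusion $\partial_B S(u) \subseteq \{A(\AA_s\cup\BB_0)^{-1}\chi(\AA_s\cup\BB_0) : \BB_0\subseteq \BB\}$, I would use that $S$ and $q = u - AS(u)$ are continuous in $u$, so $u_n \to u$ forces $y_n \to y$ and $q_n \to q$. The strict inequalities defining $\AA_s(u)$ and the nonvanishing defining $\II(u)$ are open conditions, so they persist: $\AA_s(u) \subseteq \AA_s(u_n)$ and $\II(u) \subseteq \II(u_n)$ for $n$ large. Conversely, indices of $\AA_s(u_n)$ satisfy $(y_n)_i=0$, which in the limit gives $y_i = 0$, i.e.\ membership in $\AA_s(u)\cup\BB$. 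Since $S$ is differentiable at $u_n$, Lemma~\ref{lem:Sglatt} gives $\BB(u_n) = \emptyset$, so $\AA_s(u_n) = \AA_s\cup \BB_0^{(n)}$ for some $\BB_0^{(n)}\subseteq \BB$. As there are only finitely many such subsets, a subsequence has $\BB_0^{(n)}$ constant, and the limit matrix has the claimed form.

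For the reverse inclusion, given $\BB_0\subseteq \BB$, I would construct an explicit approximating sequence by defining $(y_n, q_n)$ componentwise: keep $(y_n)_i = y_i$, $(q_n)_i = q_i$ for $i\in \II\cup\AA_s$; set $(y_n)_i = 0$ and $(q_n)_i = (1 - 1/n)\, q_i$ for $i\in \BB_0$; and set $(y_n)_i = q_i/n$, $(q_n)_i = q_i$ for $i\in \BB\setminus\BB_0$. Putting $u_n := A y_n + q_n$, one checks directly that the complementarity system \eqref{eq:compl} holds with these values (using $|q_i| = 1$ on $\BB$), so uniqueness in Proposition~\ref{prop:vi} yields $y_n = S(u_n)$. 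By construction $u_n \to u$ and the sets at $u_n$ are $\AA_s(u_n) = \AA_s\cup\BB_0$, $\BB(u_n) = \emptyset$, so Lemma~\ref{lem:Sglatt} gives differentiability and the step~1 formula delivers the constant sequence $S'(u_n) = A(\AA_s\cup\BB_0)^{-1}\chi(\AA_s\cup\BB_0)$, whose limit is the desired matrix.

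The main technical obstacle is the $\subseteq$ inclusion, specifically the persistence argument for the active sets under the perturbation $u_n\to u$; this is where continuity of $S$ (hence of $q$) is essential and where one must carefully exploit that $\AA_s$ is defined by a strict inequality while $\BB$ is the degenerate boundary case. Once persistence is established, the finiteness of $2^{|\BB|}$ disposes of the sequence issue cleanly, and the construction in the $\supseteq$ direction is routine by direct verification of \eqref{eq:compl}.
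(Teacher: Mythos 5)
Your proposal is correct and follows essentially the same route as the paper's proof: both directions are handled by the persistence of the strictly active/inactive index sets under $u_n \to u$ (giving $\AA_s \subseteq \AA_s(u_n) \subseteq \AA_s \cup \BB$ for large $n$) for the inclusion $\subseteq$, and by an explicit perturbation of $(y,q)$ on $\BB_0$ and $\BB\setminus\BB_0$ verified against the complementarity system \eqref{eq:compl} for the inclusion $\supseteq$. The only (harmless) difference is organizational: you first pin down the matrix formula $S'(u_n)=A(\AA_s(u_n))^{-1}\chi(\AA_s(u_n))$ at differentiability points and then apply a pigeonhole argument over the finitely many subsets of $\BB$, whereas the paper works with the directional derivatives $\eta_n=S'(u_n)h$ for fixed $h$ and extracts the set $\BB_0$ via subsequences, which makes your write-up of the reverse inclusion slightly more direct (the derivative sequence is constant by construction, so no compactness argument is needed).
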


\begin{remark}
 Note that $A(\NN)$ is indeed invertible for every index set $\NN \subset \{1, ..., n\}$, since it is positive definite:
 For an arbitrary $v\in \R^n$, we obtain
 \begin{equation*}
 \begin{aligned}
  v^\top A(\NN) v
  &= \sum_{i,j\notin \NN} A_{ij} v_i v_j + \sum_{i\in \NN} v_i^2\\
  &= [(I - \chi(\NN))v]^\top A\, (I - \chi(\NN))v + \sum_{i\in \NN} v_i^2
  \geq \min\{\lambda_{\min}, 1\} \|v\|^2,
 \end{aligned}
 \end{equation*}
 where $\lambda_{\min} > 0$ denotes the minimal eigenvalue of $A$.
\end{remark}

\begin{remark}
 We could equivalently replace the last line in the definition of $\AA(\NN)$ by
 \begin{equation*}
  A(\NN)_{ij} := c, \quad \text{if } i=j\in \NN
 \end{equation*}
 with some $c\neq 0$, since no matter, which value is chosen for $c\neq 0$, the matrix $A(\NN)^{-1}\chi(\NN)$ is always the same, as
 \begin{equation}\label{eq:Beq}
  A(\NN) \tilde\eta = \chi(\NN) h\quad
  \Longleftrightarrow \quad
  \left\{
  \begin{aligned}
   \tilde\eta_i &= 0 & & \forall \, i\in \NN,\\
   \sum_{j \notin \NN} A_{ij}\tilde \eta_j &= h_i & & \forall\, i\in \{1, ..., n\} \setminus\NN,
  \end{aligned}
  \right.
 \end{equation}
 and there is no more $c$ appearing on the right hand side of this equivalence.
\end{remark}

\begin{proofof}{Theorem \ref{thm:boulimatrix}}
 Recall that
 \begin{equation}\label{eq:subdiffRn}
  \partial_B S(u) = \big\{B\in \R^{n\times n} :
  \exists \, \{u_n\}\subset \DD_S \text{ with } u_n \to u,\, S'(u_n) \to B \big\},
 \end{equation}
 where $\DD_S$ again denotes the (dense) set of points, where $S$ is differentiable.
 Now consider an arbitrary $B\in \partial_B S(u)$ so that there is a sequence in $\DD_S$ with
 \begin{equation}\label{eq:convjacobi}
  u_n \to u \quad \text{and} \quad S'(u_n) \to B.
 \end{equation}
 The Lipschitz continuity of $S$ implies
 \begin{equation}\label{eq:convstateslack}
  y_n := S(u_n) \to S(u) =: y \quad \Longrightarrow \quad
  q_n = u_n - A y_n \to u - A y = q.
 \end{equation}
 Let us denote the active set associated with $y_n$ by $\AA^n$ and analogously for $\II^n$ etc.
 Then, from \eqref{eq:convstateslack}, we deduce the existence of an $N\in \N$ such that
 \begin{equation}\label{eq:convset}
  \II \subset \II^n \quad \text{and}\quad \AA_s \subset \AA_s^n \quad
  \quad \forall\, n\geq N.
 \end{equation}
 Next let $h\in \R^n$ be fixed, but arbitrary.
 Since $u_n\in \DD_S$, we know from Lemma \ref{lem:Sglatt} that $\eta_n := S'(u_n)h$ solves
 \begin{equation}\label{eq:jacobieq}
  \eta_i^n = 0 \quad \forall\, i\in \AA^n, \quad
  \sum_{j=1}^n A_{ij} \eta_j^n = h_i \quad\forall\, i\in \II^n.
 \end{equation}
 By \eqref{eq:convjacobi} we obtain that
 \begin{equation}\label{eq:conveta}
  \tilde\eta:= B\,h = \lim_{n\to \infty} \eta_n.
 \end{equation}
 Therefore, from \eqref{eq:convset}--\eqref{eq:conveta} it follows that
 \begin{equation}
  \tilde\eta_i = 0 \quad \forall\, i\in \AA_s, \quad
  \sum_{j=1}^n A_{ij} \tilde\eta_j = h_i \quad\forall\, i\in \II.
 \end{equation}
 It remains to investigate what happens on the biactive set $\BB = \AA\setminus \AA_s$.
 For this purpose we introduce
 \begin{equation*}
  \BB_0 := \{i\in \BB : \exists \text{ a subsequence } \{n_k\} \text{ such that }
  y_i^{n_k} = 0 \;\forall\, k\in \N\}
 \end{equation*}
 so that, for all $i\in \BB\setminus\BB_0$, it holds that $y^n_i \neq 0$ for all $n\in \N$ sufficiently large.
 Then we deduce from \eqref{eq:jacobieq} that $\eta^{n_k}_i = 0$ for all $i \in \BB_0$ and all $k\in \N$ and that
 $\sum_{j=1}^n A_{ij} \eta^n_j = h_i$ for all $i\in \BB\setminus\BB_0$, provided that $n\in \N$ is sufficiently large.
 Since $\eta_n \to \tilde\eta$, we obtain in this way
 \begin{equation}\label{eq:tildeeta}
  \tilde\eta_i = 0 \quad \forall\, i\in \AA_s \cup \BB_0, \quad
  \sum_{j\notin \AA_s \cup \BB_0} A_{ij} \tilde\eta_j = h_i
  \quad\forall\, i\in \{1, ..., n\}\setminus (\AA_s \cup \BB_0).
 \end{equation}
 Thus, in view of \eqref{eq:Beq} and since $h$ was arbitrary, we observe that
 \begin{equation}\label{eq:subdiffmatrix}
  B = A(\AA_S\cup\BB_0)^{-1}\chi(\AA_s\cup\BB_0).
 \end{equation}
 Hence, $B$ has indeed the form stated in the theorem.

 To complete the proof, we need to show that, for every subset $\BB_0\subset \BB$ the corresponding
 matrix $B$ given by \eqref{eq:boulimatrix} is an element of $\partial_B S(u)$.
 To this end, let $\BB_0\subset \BB$ be arbitrary, but fixed and let us abbreviate
 $\BB_1 := \BB\setminus\BB_0$. In the following, we show that there exist a sequence $\{u_n\}$ satisfying
 \begin{equation}\label{eq:sequn}
 \begin{gathered}
  u_n \in D_S,\quad y^n_i = 0\quad \forall\, i \in \BB_0, \quad y^n_i \neq 0\quad \forall\, i \in \BB_1,
  \quad \forall\, n\in \N,\\
  \text{and} \quad u_n \to u, \quad S'(u_n)\to B \quad \text{as } n\to\infty,
 \end{gathered}
 \end{equation}
 which, according to \eqref{eq:subdiffRn} implies $B\in \partial_S S(u)$.
 To verify the existence of such a sequence, let $\varepsilon > 0$ and define
 \begin{equation*}
  y^\varepsilon := y + \sum_{k\in \BB_1} \varepsilon \, \sgn(q_{k})\,\mathrm{e}_{k},
 \end{equation*}
 where $\mathrm{e}_i$ denotes the $i$-the Euclidian unit vector. By construction we obtain
 for the inactive and active set associated with $y^\varepsilon$ that
 \begin{equation}\label{eq:Ieps}
  \II^\varepsilon = \II \cup \BB_1 \quad \text{and}\quad \AA^\varepsilon = \AA\setminus\BB_1.
 \end{equation}
 Moreover, we set
 \begin{equation*}
  q^\varepsilon = q - \sum_{k\in \BB_0} \varepsilon \,\sgn(q_k) \,\mathrm{e}_k.
 \end{equation*}
 Thus, for $\varepsilon \in (0,1]$,
 we obtain $|q^\varepsilon_i| \leq 1$ for all $i = 1, ..., n$. Moreover, the above construction leads to
 \begin{equation}\label{eq:Aseps}
  \AA_s^\varepsilon = \AA_s \cup \BB_0 = \AA \setminus \BB_1,
 \end{equation}
 which, together with \eqref{eq:Ieps}, shows that
 \begin{equation}\label{eq:nobiactive}
  \BB^\varepsilon = \AA^\varepsilon \setminus\AA_s^\varepsilon = \emptyset,
 \end{equation}
 i.e., the biactive set associated with $y_\varepsilon$ is empty.
 Furthermore, if we define
 \begin{equation}\label{eq:ueps}
  u^\varepsilon := u + \varepsilon \sum_{k\in \BB_0} \sgn(q_k)\,A\,\mathrm{e}_k
  + \varepsilon \sum_{k\in \BB_1} \sgn(q_k) \,\mathrm{e}_k,
 \end{equation}
 then we obtain by construction that
 \begin{equation*}
  A\, y^\varepsilon + q^\varepsilon = u^\varepsilon,
  \quad y_i^\varepsilon\, q_i^\varepsilon = |y_i^\varepsilon|,
  \quad |q_i^\varepsilon| \leq 1, \quad i = 1, ..., n,
 \end{equation*}
 which, due to \eqref{eq:compl}, implies $y^\varepsilon = S(u^\varepsilon)$.
 Because of \eqref{eq:nobiactive}, we further have
 $\KK(y^\varepsilon) = \{v\in \R^n: v_i = 0, \text{ if } y^\varepsilon_i = 0\}$, which, thanks to
 Lemma \ref{lem:Sglatt}, in turn implies $u^\varepsilon \in D_s$. In addition,
 \eqref{eq:ueps} immediately gives $u_\varepsilon \to u$ as $\varepsilon\searrow 0$.
 Because of \eqref{eq:Ieps}, we moreover have $y^\varepsilon_i \neq 0$
 for all $i\in \BB_1$ and, due to complementarity and \eqref{eq:Aseps}, $y^\varepsilon_i = 0$
 for all $i\in \BB_0$. Therefore, the sequence $\{u^\varepsilon\}_{\varepsilon>0}$ almost satisfies
 all conditions required in \eqref{eq:sequn}, except $S'(u^\varepsilon) \to B$.
 To establish this, let $\{\varepsilon_n\}_{n\in \N}$ be a sequence tending to zero and denote the
 associated $u^{\varepsilon_n}$ simply by $u_n$. The global Lipschitz continuity of $S$ yields that
 \begin{equation*}
  \|S'(u_n)\|_{\R^{n\times n}} \leq L \quad \forall\, n\in \N,
 \end{equation*}
 where $L>0$ is the Lipschitz constant of $S$. Thus there exists a convergent subsequence, i.e.,
 \begin{equation*}
  S'(u_{n_k}) \to \tilde B \quad\text{as } k \to \infty.
 \end{equation*}
 Since $y_i^{n_k} = 0$ for all $i \in \BB_0$ and $y_i^{n_k} \neq 0$ for all $i\in \BB_1$,
 we can argue completely analogously to the first part of the proof to show
 \begin{equation*}
  \tilde B = A(\AA_S\cup\BB_0)^{-1}\chi(\AA_s\cup\BB_0) = B,
 \end{equation*}
 which finally establishes the claim.
\hfill\end{proofof}

\begin{lemma}\label{lem:Srabl}
 For all $u,h \in \R^n$, there exists $G\in \partial_B S(u)$ such that $S'(u;h) = G\,h$.
 Hence Assumption~\ref{assu:derivS} is fulfilled by the control-to-state map of \eqref{eq:piv}.
\end{lemma}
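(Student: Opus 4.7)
The plan is to produce the desired $G\in\partial_B S(u)$ explicitly by reading off an appropriate subset $\BB_0\subseteq\BB$ from the VI-characterization of $\eta:=S'(u;h)$ and then invoking Theorem~\ref{thm:boulimatrix}.

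First I would recall from Proposition~\ref{prop:vi} that $\eta=S'(u;h)$ is the unique element of $\KK(y)$ with $\dual{A\eta}{v-\eta}\geq\dual{h}{v-\eta}$ for all $v\in \KK(y)$. Using the sets from \eqref{eq:sets}, I define
\begin{equation*}
 \BB_0 := \{i \in \BB : \eta_i = 0\}, \qquad \BB_1 := \BB\setminus\BB_0 = \{i\in\BB : \eta_i\neq 0\},
\end{equation*}
and set $G := A(\AA_s\cup\BB_0)^{-1}\chi(\AA_s\cup\BB_0)$. By Theorem~\ref{thm:boulimatrix}, $G\in\partial_B S(u)$, so it only remains to show that $Gh=\eta$. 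Thanks to the equivalence \eqref{eq:Beq}, this amounts to verifying
\begin{equation*}
 \eta_i = 0 \quad\forall\, i\in \AA_s\cup\BB_0, \qquad
 \sum_{j\notin \AA_s\cup\BB_0} A_{ij}\eta_j = h_i \quad\forall\, i\notin \AA_s\cup\BB_0.
\end{equation*}

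The first set of equalities is immediate: for $i\in\AA_s$ we have $|q_i|<1$, hence $v_i=0$ in $\KK(y)$ so in particular $\eta_i=0$; for $i\in\BB_0$ this is just the definition of $\BB_0$. Since $\eta_j=0$ on $\AA_s\cup\BB_0$, the second equality is equivalent to $(A\eta)_i=h_i$ for $i\in\II\cup\BB_1$. To extract this from the variational inequality, I would look at feasible perturbations in $\KK(y)$. For $i\in\II$, the cone $\KK(y)$ imposes no restriction on the $i$-th component, so $v=\eta\pm t\,\mathrm{e}_i\in\KK(y)$ for every $t\in\R$, which inserted into the VI forces $(A\eta)_i=h_i$. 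For $i\in\BB_1$, we have $|q_i|=1$ and $\eta_iq_i\geq 0$ with $\eta_i\neq 0$, so $\eta_iq_i>0$ strictly; therefore $v=\eta\pm t\,\mathrm{e}_i$ still satisfies $v_iq_i\geq 0$ for all sufficiently small $|t|$, which again yields $(A\eta)_i=h_i$. Combining both cases gives the second block of equations, so $\eta$ solves the system characterizing $Gh$, and hence $S'(u;h)=\eta=Gh$.

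The only subtle point—where I expect a careful argument—is the verification that for $i\in\BB_1$ bidirectional perturbations of $\eta_i$ remain in $\KK(y)$; this relies crucially on the strict inequality $\eta_iq_i>0$, which in turn uses $|q_i|=1$ on $\BB$ together with $\eta_i\neq 0$. Everything else is bookkeeping against the block structure of $A(\AA_s\cup\BB_0)^{-1}\chi(\AA_s\cup\BB_0)$ made explicit in \eqref{eq:Beq}.
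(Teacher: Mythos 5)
Your proof is correct, and it follows the same overall architecture as the paper's: identify a suitable $\BB_0\subseteq\BB$, invoke Theorem~\ref{thm:boulimatrix} to see that $G=A(\AA_s\cup\BB_0)^{-1}\chi(\AA_s\cup\BB_0)\in\partial_B S(u)$, and then check via \eqref{eq:Beq} that $\eta=S'(u;h)$ solves the corresponding linear system. The execution differs in two respects. First, the paper defines $\BB_0$ not by $\eta_i=0$ but by the sign condition $q_i((I-A)\eta+h)_i<0$, reading it off from a projection/fixed-point reformulation \eqref{eq:etaequiv} of the directional-derivative VI; your $\BB_0=\{i\in\BB:\eta_i=0\}$ can differ from the paper's on degenerate biactive indices where $\eta_i=0$ and $(A\eta)_i=h_i$ hold simultaneously, but in that case either placement of the index is consistent with \eqref{eq:Beq}, so both choices produce a valid $G$ (the representing matrix need not be unique). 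Second, where the paper verifies the linear system by comparing \eqref{eq:etaequiv} with \eqref{eq:Beq}, you verify it directly from the variational inequality by testing with $v=\eta\pm t\,\mathrm{e}_i$: for $i\in\II$ the cone is unconstrained in that component (since $y_i\neq0$ forces $|q_i|=1$), and for $i\in\BB_1$ the strict inequality $\eta_iq_i>0$ makes small bidirectional perturbations feasible. This test-vector argument is somewhat more elementary and self-contained, at the cost of re-deriving pointwise optimality conditions that the paper's formula \eqref{eq:etaequiv} packages in one step; both routes are sound.
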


\begin{proof}
 Let $u, h \in \R^n$ be arbitrary and again set $y = S(u)$. As above we denote by $\AA_s$, $\II$,
 and $\BB$ the active, inactive, and bi-active sets associated with $y$.
 Recall that the directional derivative of the solution mapping
 in direction $h$ is given by the unique solution of
 \begin{equation} \label{eq:derivative VI}
  \eta \in \KK(y), \quad
  \dual{A \eta}{v-\eta} \geq \dual{h}{v-\eta} \quad \forall\, v\in \KK(y),
 \end{equation}
 with $\KK(y)$ as defined in \eqref{eq:critcone}.
 This cone can equivalently be expressed as
 \begin{equation*}
  \KK(y)=\left\{ v \in \R^n: v_i=0, \text{ if }|q_i|<1, \; v_i
  \begin{cases}
   \geq 0, & \text{if } y_i=0,\, q_i=1 \\
   \leq 0, & \text{if } y_i=0,\, q_i=-1
  \end{cases}
  \right\}\,,
 \end{equation*}
 which in turn leads to the following equivalent expression for $\eta$
 \begin{equation}\label{eq:etaequiv}
  \eta_i =
  \begin{cases}
   \max\{0,(I-A) \eta + h\}_i, & \text{ if } y_i=0,\, q_i=1\\
   0, & \text{ if } |q_i|<1\\
   \min\{0,(I-A) \eta + h\}_i, & \text{ if } y_i=0,\, q_i=-1\\
   ((I-A) \eta +h)_i, & \text{ elsewhere}.
  \end{cases}
 \end{equation}
 So, if we define
 \begin{equation*}
  \BB_0 := \{ i\in \{1, ..., n\} :
  y_i=0,\, |q_i| = 1,\, q_i ((I-A) \eta + h)_i < 0\} \subseteq \BB,
 \end{equation*}
 then a comparison of \eqref{eq:etaequiv} with \eqref{eq:Beq} shows that
 \begin{equation*}
  \eta = A(\AA_s \cup \BB_0)^{-1}\chi(\AA_s \cup \BB_0) h.
 \end{equation*}
 Since the matrix on the right hand side is an element of $\partial_B S(u)$, this
 establishes the assertion.
\hfill\end{proof}

\subsection{Approximation of the Bouligand-Subdifferential}\label{sec:approxdS}

The aim of the upcoming section is to construct a \emph{computable} approximation of $\partial_B S$, that satisfies the
conditions in Assumption~\ref{assu:approxsubdiff} so that the model function given by \eqref{eq:modeloptctrl}
fulfills Assumption~\ref{assu:model} for the convergence result in Theorem~\ref{thm:TRconv}.
For this purpose, we need a sharpened Lipschitz continuity result for the solution operator $S$ associated with \eqref{eq:vi}:

\begin{lemma}\label{lem:Slip}
 For all $u_1, u_2 \in \R^n$, there holds
 \begin{equation*}
 \begin{aligned}
  \|y_1 - y_2\|_\infty &\leq L_y\, \|u_1 - u_2 \|  & & \text{with }  L_y = \frac{1}{\lambda_{\min}},\\
  \|q_1 - q_2\|_\infty &\leq L_q\, \|u_1 - u_2 \|  & & \text{with }  L_q = \frac{\lambda_{\max}}{\lambda_{\min}} + 1,
 \end{aligned}
 \end{equation*}
 where $y_i, q_i \in \R^n$, $i=1,2$, are the solutions of \eqref{eq:compl} associated with $u_i$, and
 $\lambda_{\min}$ and $\lambda_{\max}$ denote the minimal and maximal eigenvalue of $A$, respectively.
\end{lemma}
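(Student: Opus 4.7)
The plan is to derive both bounds from the classical energy estimate for the VI and then exploit the trivial inequality $\|\cdot\|_\infty \leq \|\cdot\|_2$ to pass from the Euclidean to the maximum norm; the complementarity characterization from Proposition~\ref{prop:vi} will do the rest of the work for $q$.

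First, I would test the VI satisfied by $y_1 = S(u_1)$ with $v = y_2$ and the VI satisfied by $y_2 = S(u_2)$ with $v = y_1$. Since the $\|\cdot\|_1$-terms in the two inequalities cancel pairwise when added, one is left with
\begin{equation*}
 \dual{A(y_1-y_2)}{y_1-y_2} \leq \dual{u_1-u_2}{y_1-y_2}.
\end{equation*}
The positive definiteness of $A$ and the Cauchy--Schwarz inequality then give $\lambda_{\min}\|y_1-y_2\|^2 \leq \|u_1-u_2\|\,\|y_1-y_2\|$, hence $\|y_1-y_2\| \leq \|u_1-u_2\|/\lambda_{\min}$. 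Because $\|\cdot\|_\infty \leq \|\cdot\|_2$ in $\R^n$, this immediately yields the first claim with $L_y = 1/\lambda_{\min}$.

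For the bound on $q$, I would invoke the equivalent complementarity reformulation from Proposition~\ref{prop:vi}, which gives $q_i = u_i - A y_i$ for $i=1,2$, and in particular
\begin{equation*}
 q_1 - q_2 = (u_1 - u_2) - A(y_1 - y_2).
\end{equation*}
Taking the $\infty$-norm, using $\|\cdot\|_\infty \leq \|\cdot\|_2$ on both terms, bounding $\|A(y_1-y_2)\|_2 \leq \lambda_{\max}\|y_1-y_2\|_2$, and substituting the just-derived estimate for $y_1-y_2$, one obtains
\begin{equation*}
 \|q_1 - q_2\|_\infty \leq \|u_1-u_2\| + \frac{\lambda_{\max}}{\lambda_{\min}}\,\|u_1-u_2\|,
\end{equation*}
which is exactly the second claim with $L_q = \lambda_{\max}/\lambda_{\min} + 1$.

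No step should present a real obstacle here; the only point worth double-checking is that the $\|\cdot\|_1$-terms indeed drop out when adding the two VIs (they do, since each appears once with a plus and once with a minus sign), after which the sharpened bound in the maximum norm is essentially a free consequence of standard monotone-operator arguments combined with the pointwise identity $q = u - Ay$.
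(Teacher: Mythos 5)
Your proof is correct and follows essentially the same route as the paper's: testing each VI with the other solution, adding to cancel the $\|\cdot\|_1$-terms and obtain $\lambda_{\min}\|y_1-y_2\|\leq\|u_1-u_2\|$, and then using $q=u-Ay$ together with $\|A\|=\lambda_{\max}$ for the second bound. Your explicit remark that $\|\cdot\|_\infty\leq\|\cdot\|_2$ closes the passage to the maximum norm, a step the paper leaves implicit.
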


\begin{proof}
 By testing the VI for $y_1$ with $y_2$ and vice versa and adding the arising inequalities, we obtain
 \begin{equation}\label{eq:ylip}
  \lambda_{\min} \|y_1 - y_2\| \leq \|u_1 - u_2\|,
 \end{equation}
 which immediately gives the first assertion. The second directly follow from the first equation in \eqref{eq:compl}, which yields
 \begin{equation*}
  \|q_1 - q_2\| \leq \|A\|_{\R^{n\times n}} \|y_1 - y_2\| + \|u_1 - u_2\|.
 \end{equation*}
 Inserting \eqref{eq:ylip} then implies the second estimate in the statement of the lemma.
\hfill\end{proof}

As the active, inactive, and biactive sets at multiple points will occur in what follows, we denote these sets
for a given $u\in \R^n$ by $\AA_s(u)$, $\II(u)$, and $\BB(u)$. (Note that these sets are determined by $y$ and $q$,
which in turn uniquely depend on $u$.)

\begin{definition}\label{def:possbiact}
 Let $u\in \R^n$ and $\Delta > 0$ be given and set $y = S(u)$. Then we define the set of \emph{possibly biactive indices} by
 \begin{equation*}
 \begin{aligned}
  \PP(u, \Delta) := \{i \in \{1, ..., n\} :  |y_i| < L_y \,\Delta \; \wedge \;
  \big||q_i| - 1\big| < L_q \,\Delta\}.
 \end{aligned}
 \end{equation*}
\end{definition}

In view of Lemma~\ref{lem:Slip}, it is clear that
\begin{equation}\label{eq:subdiffinkl}
 \bigcup_{\xi \in B_\Delta(u)} \BB(\xi) \subset \PP(u, \Delta),
\end{equation}
which is essential for the upcoming analysis. Given the set of possibly active indices, we construct our approximation of
the Bouligand-subdifferential as follows:
\begin{equation}\label{eq:subdiffapprox}
 \GG(u, \Delta) := \{ A(\AA_s(u)\cup \BB_0)^{-1}\chi(\AA_s(u) \cup \BB_0) : \BB_0 \subseteq \PP(u, \Delta) \}.
\end{equation}
As an immediate consequence of \eqref{eq:subdiffinkl} and Theorem~\ref{thm:boulimatrix}, we obtain

\begin{lemma}\label{lem:obermenge}
 The approximation of the Bouligand-subdifferential in \eqref{eq:subdiffapprox} satisfies condition \eqref{eq:obermenge}.
\end{lemma}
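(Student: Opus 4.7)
The plan is to check the inclusion element-by-element. Fix $\xi \in B_\Delta(u)$ and $B \in \partial_B S(\xi)$. Theorem~\ref{thm:boulimatrix} applied at $\xi$ furnishes a subset $\BB_0 \subseteq \BB(\xi)$ such that
\begin{equation*}
 B = A(\AA_s(\xi)\cup\BB_0)^{-1}\,\chi(\AA_s(\xi)\cup\BB_0),
\end{equation*}
and the inclusion \eqref{eq:subdiffinkl} immediately delivers $\BB_0\subseteq\BB(\xi)\subseteq\PP(u,\Delta)$, so the bi-active contribution to the index set is already correctly placed. The remaining task is to re-express the full index set $\NN:=\AA_s(\xi)\cup\BB_0$ in the form $\AA_s(u)\cup\BB_0'$ required by the definition \eqref{eq:subdiffapprox} of $\GG(u,\Delta)$, with $\BB_0'\subseteq\PP(u,\Delta)$.

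For this I would combine the sharpened Lipschitz estimates of Lemma~\ref{lem:Slip} with the complementarity relations \eqref{eq:compl} to establish the symmetric-difference bound
\begin{equation*}
 \AA_s(u)\,\triangle\,\AA_s(\xi)\;\subseteq\;\PP(u,\Delta):
\end{equation*}
if $i$ is an index on which the two strongly active sets disagree, then $y_i$ must cross between zero and non-zero, and $|q_i|$ must cross between strictly below $1$ and equal to $1$, as one moves from $u$ to $\xi$. The Lipschitz bounds $|y_i(u)-y_i(\xi)|\leq L_y\Delta$ and $||q_i(u)|-|q_i(\xi)||\leq L_q\Delta$, together with the constraint $|q_i|\leq 1$, then force $|y_i(u)|\leq L_y\Delta$ and $\bigl||q_i(u)|-1\bigr|\leq L_q\Delta$, which is precisely the membership condition in $\PP(u,\Delta)$. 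Once this containment is in hand, taking $\BB_0':=\NN\setminus\AA_s(u)$ yields both $\BB_0'\subseteq\PP(u,\Delta)$ (since $\BB_0\subseteq\PP(u,\Delta)$ and $\AA_s(\xi)\setminus\AA_s(u)\subseteq\PP(u,\Delta)$) and $\AA_s(u)\cup\BB_0'=\NN$, so $B\in\GG(u,\Delta)$ as required.

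The single genuinely technical step is the verification of the symmetric-difference bound above, which proceeds by distinguishing which of the sets $\AA_s,\II,\BB$ contains the index at $u$ and at $\xi$ and using the bound $|q_i|\leq 1$ to exclude the case $|q_i|\geq 1+L_q\Delta$. The strict-versus-non-strict form of the inequalities in Definition~\ref{def:possbiact} needs a little care in the boundary cases $\|u-\xi\|=\Delta$, but this is absorbed by the usual Lipschitz slack. Everything else in the argument is a direct translation of Theorem~\ref{thm:boulimatrix} and \eqref{eq:subdiffinkl}, consistent with the author's remark that the lemma is an immediate consequence of these two ingredients.
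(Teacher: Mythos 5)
Your write-up is more candid than the paper's own treatment (the paper dismisses the lemma as an ``immediate consequence'' of \eqref{eq:subdiffinkl} and Theorem~\ref{thm:boulimatrix} and offers no further argument), and you have correctly isolated the one point that actually needs work: an element of $\partial_B S(\xi)$ carries the index set $\NN=\AA_s(\xi)\cup\BB_0$, whereas every element of $\GG(u,\Delta)$ carries an index set of the form $\AA_s(u)\cup\BB_0'$. However, your resolution of this point fails. With $\BB_0':=\NN\setminus\AA_s(u)$ you get $\AA_s(u)\cup\BB_0'=\AA_s(u)\cup\NN$, and this equals $\NN$ only if $\AA_s(u)\subseteq\NN=\AA_s(\xi)\cup\BB_0$. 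The symmetric-difference bound $\AA_s(u)\,\triangle\,\AA_s(\xi)\subseteq\PP(u,\Delta)$ does not deliver this: it places the indices of $\AA_s(u)\setminus\NN$ inside $\PP(u,\Delta)$, but the definition \eqref{eq:subdiffapprox} only permits \emph{adding} indices from $\PP(u,\Delta)$ to $\AA_s(u)$, never removing indices from $\AA_s(u)$. The claimed identity of index sets is therefore not established.

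The gap is not cosmetic. For $i\in\AA_s(u)$ one only knows $|q_i|<1$ at $u$; once $L_q\Delta$ exceeds $1-|q_i|$, the index $i$ may lie in $\II(\xi)$ for some $\xi\in B_\Delta(u)$, so that $i\notin\AA_s(\xi)\cup\BB(\xi)\supseteq\NN$ and no admissible $\BB_0'$ exists. Concretely, take $n=1$, $A=2$, $u=0$ (so $y=q=0$, $\AA_s(u)=\{1\}$, and every matrix in $\GG(u,\Delta)$ equals $0$) and $\Delta=10$: the point $\xi=5$ gives $y(\xi)=2$ and $S'(\xi)=1/2\in\partial_B S(\xi)$, which is not in $\GG(u,\Delta)=\{0\}$. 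Hence the inclusion \eqref{eq:obermenge} itself fails for large $\Delta$, and no elaboration of your argument can rescue the statement in the generality in which it is posed; it does hold once $\Delta$ is small enough relative to $\min_{i\in\AA_s(u)}(1-|q_i|)$ and $\min_{i\in\II(u)}|y_i|$ that $\AA_s(u)\subseteq\AA_s(\xi)$ and $\II(u)\subseteq\II(\xi)$ for all $\xi\in B_\Delta(u)$, which is the regime the paper implicitly has in mind (and the only one used in the convergence analysis, where $\Delta_k\to0$). Your remark about the strict versus non-strict inequalities in Definition~\ref{def:possbiact} identifies a further, but secondary, boundary issue.
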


On the other hand, we find the following:

\begin{lemma}\label{lem:Gapprox}
 Let $(u_k, \Delta_k) \subset \R^n \times \R^+$ be a sequence with
 $(u_k, \Delta_k) \to (u,0)$. Then, there exists an index $K\in \N$
 (depending on $u$) so that $\PP(u_k, \Delta_k) \subseteq \BB(u)$
 for all $k\geq K$. Therefore, for all $k\geq K$, there holds $\GG(u_k, \Delta_k) \subseteq \partial_B S(u)$
 such that condition \eqref{eq:Gapprox} is fulfilled, too.
\end{lemma}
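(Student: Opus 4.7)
The plan is to use Lemma~\ref{lem:Slip} to transfer convergence $u_k \to u$ into the componentwise convergence $y_k \to y$ and $q_k \to q$, and then exploit the finiteness of the index set $\{1,\ldots,n\}$ together with the explicit formula for $\partial_B S(u)$ furnished by Theorem~\ref{thm:boulimatrix}. The whole argument is essentially a continuity/pigeonhole argument, and condition \eqref{eq:Gapprox} will in fact be satisfied with the stronger conclusion that the distance is identically zero for all $k$ large enough.

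First I would establish the set inclusion $\PP(u_k,\Delta_k)\subseteq \BB(u)$ for large $k$. Fix any index $i\notin\BB(u)$, i.e.\ either $y_i\neq 0$ or $|q_i|<1$. In the first case $|y_k^i|\to |y_i|>0$ while $L_y\Delta_k\to 0$, so the first defining inequality of $\PP(u_k,\Delta_k)$ fails eventually; in the second case $\bigl||q_k^i|-1\bigr|\to 1-|q_i|>0$ while $L_q\Delta_k\to 0$, so the second defining inequality fails eventually. Hence there exists $K_i\in\N$ with $i\notin\PP(u_k,\Delta_k)$ for every $k\geq K_i$, and because $\{1,\ldots,n\}\setminus\BB(u)$ is finite, taking $K:=\max_{i\notin \BB(u)}K_i$ yields the first assertion.

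Second, I would show the sandwich $\AA_s(u)\subseteq \AA_s(u_k)\subseteq \AA_s(u)\cup\BB(u)$ for all $k$ sufficiently large (possibly enlarging $K$). The left inclusion follows from $|q_i|<1\Rightarrow |q_k^i|<1$ for $k$ large, again by continuity and finiteness of the index set. For the right inclusion, note that $\AA_s(u_k)\subseteq \AA(u_k)$, so $i\in\AA_s(u_k)$ forces $y_k^i=0$; if one had $i\in\II(u)$ then $y_k^i\to y_i\neq 0$ would contradict $y_k^i=0$ for large $k$. Since $\II(u)$ is finite, a single $K$ works for all such $i$ simultaneously.

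Combining the two steps, for any $\BB_0\subseteq\PP(u_k,\Delta_k)$ and $k\geq K$ one has
\begin{equation*}
 \AA_s(u_k)\cup\BB_0 \;=\; \AA_s(u)\cup \BB_0',\qquad
 \BB_0' := \bigl(\AA_s(u_k)\setminus\AA_s(u)\bigr)\cup\BB_0 \;\subseteq\; \BB(u),
\end{equation*}
since both $\AA_s(u_k)\setminus\AA_s(u)\subseteq\BB(u)$ and $\BB_0\subseteq\PP(u_k,\Delta_k)\subseteq\BB(u)$. By Theorem~\ref{thm:boulimatrix} the matrix $A(\AA_s(u_k)\cup\BB_0)^{-1}\chi(\AA_s(u_k)\cup\BB_0) = A(\AA_s(u)\cup\BB_0')^{-1}\chi(\AA_s(u)\cup\BB_0')$ therefore lies in $\partial_B S(u)$, and since the choice of $\BB_0$ was arbitrary this proves $\GG(u_k,\Delta_k)\subseteq \partial_B S(u)$ for all $k\geq K$. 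Condition \eqref{eq:Gapprox} is then immediate: for every $G\in\GG(u_k,\Delta_k)$ one may choose $W=G\in\partial_B S(u)$ to make $\|G-W\|_{\R^{n\times n}}=0$, so the supremum defining the distance vanishes identically for $k\geq K$.

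There is no real obstacle; the only points requiring care are keeping the direction of the two inclusions in step two straight (which relies on the fact that $\AA_s$ is defined via $|q_i|<1$, not via $y_i=0$, together with the complementarity forcing $\AA_s\subseteq\AA$) and the bookkeeping in the rewriting of $\AA_s(u_k)\cup\BB_0$ so that the new biactive selector $\BB_0'$ still sits inside $\BB(u)$.
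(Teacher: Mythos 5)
Your proof is correct and follows essentially the same route as the paper: continuity of $y$ and $q$ with respect to $u$ (Lemma~\ref{lem:Slip}) combined with the finiteness of the index set gives $\PP(u_k,\Delta_k)\subseteq\BB(u)$ for large $k$, and Theorem~\ref{thm:boulimatrix} then yields $\GG(u_k,\Delta_k)\subseteq\partial_B S(u)$ and hence the vanishing of the distance in \eqref{eq:Gapprox}. The one place you go beyond the paper is your second step: the paper declares the passage from $\PP(u_k,\Delta_k)\subseteq\BB(u)$ to $\GG(u_k,\Delta_k)\subseteq\partial_B S(u)$ to be immediate, whereas your sandwich $\AA_s(u)\subseteq\AA_s(u_k)\subseteq\AA_s(u)\cup\BB(u)$ and the rewriting $\AA_s(u_k)\cup\BB_0=\AA_s(u)\cup\BB_0'$ with $\BB_0'\subseteq\BB(u)$ supply exactly the bookkeeping needed to reconcile the fact that $\GG(u_k,\Delta_k)$ is built from $\AA_s(u_k)$ while $\partial_B S(u)$ is parametrized over subsets of $\BB(u)$ relative to $\AA_s(u)$ --- a detail worth making explicit.
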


\begin{proof}
 Again, we denote the state associated with the limit $u$ by $y = S(u)$. Moreover, we define
 \begin{equation*}
  \delta_y := \min_{i\in \II(u)} \, |y_i| > 0
  \quad \text{and}\quad \delta_q := \min_{i\in \AA_s(u)} \big||q_i| - 1\big| > 0.
 \end{equation*}
 Since $u_k \to u$ and $S$ is globally Lipschitz, there exists $K_1 \in \N$ so that
 \begin{equation*}
  \min_{i\in \II(u)} \, |y_i^k| \geq \frac{\delta_y}{2}
  \quad \text{and} \quad
  \min_{i\in \AA_s(u)} \, \big||q_i^k| - 1\big| \geq \frac{\delta_q}{2} \quad \forall\, k\geq K_1.
 \end{equation*}
 Moreover, as $\Delta_k \to 0$, we can find another index $K_2\in \N$ so that
 \begin{equation*}
  \Delta_k < \min\Big\{\frac{\delta_y}{2 L_y}, \frac{\delta_q}{2 L_q} \Big\}
  \quad \forall\, k\geq K_2.
 \end{equation*}
 Consequently, we obtain for all $i\in \II(u)$ that
 \begin{equation*}
  |y^k_i| \geq \frac{\delta_y}{2} > L_y\,\Delta_k
  \quad \Longrightarrow\quad i \notin \PP(u_k, \Delta_k)
  \quad \forall\, k\geq K :=\max\{K_1, K_2\}.
 \end{equation*}
 Analogously, for all $i\in \AA_s(u)$, we have
 \begin{equation*}
  \big| |q^k_i|-1 \big| > L_q\,\Delta_k
  \quad \Longrightarrow\quad i \notin \PP(u_k, \Delta_k)
  \quad \forall\, k\geq K :=\max\{K_1, K_2\},
 \end{equation*}
 and therefore, since $\BB(u) = \{1, ..., n\} \setminus (\AA_s(u) \cup \II(u))$, it follows
 $\PP(u_k, \Delta_k) \subset \BB(u)$ as claimed. The second assertion of the lemma then
 immediately follows from Theorem~\ref{thm:boulimatrix} and the construction of our approximation in
 \eqref{eq:subdiffapprox}.
\hfill\end{proof}

For convenience of the reader, we next state the precise algorithm that arises, when applying
Algorithm~\ref{alg:TR} to \eqref{eq:piv}. We again use the reduced objective function $f(\cdot) = J(S(\cdot), \cdot)$.

\begin{algorithm}[Trust-Region Algorithm for the solution of \eqref{eq:piv}] \label{alg:trvi}
 \begin{algorithmic}[1]
  \STATE Initialization:\\
  Choose constants
  \begin{equation*}
   \Delta_{\min} > 0, \quad 0 < \eta_1 < \eta_2 < 1, \quad 0 < \beta_1 < 1 < \beta_2, \quad 0 < \mu \leq 1
  \end{equation*}
  an initial value $u_0 \in \R^n$, and an initial TR-radius $\Delta_0 > \Delta_{\min}$. Set $k=0$.
  \FOR{k= 0, 1, 2, ...}
   \STATE Solve the variational inequality \eqref{eq:vi} to compute the state $y_k$ associated with the control $u_k$.
   \STATE Choose a subset $\BB_k \subseteq \BB(u_k)$, solve the \emph{adjoint equation}
   \begin{equation*}
    A(\AA_s(u_k) \cup \BB_k) p_k = \nabla_y J(y_k, u_k),
   \end{equation*}
   and set $g_k = p_k + \nabla_u J(y_k, u_k)$.
   \STATE Choose a matrix $H_k \in \R^{n\times n}_{\textup{sym}}$, e.g.\ via a BFGS-update using $g_k$.
   \IF{$g_k = 0$}
    \STATE STOP the iteration, $0 \in \partial_B f(u_k)$.
   \ELSE
    \IF{$\Delta_k > \Delta_{\min}$}
     \STATE Compute an inexact solution $d_k$ of the \emph{trust-region subproblem}
     \begin{equation}\tag{Q$_{k}$}
      \left.
      \begin{aligned}
       \min_{d\in \R^n} & \quad q_k(d) := f(u_k) + \dual{g_k}{d} + \frac{1}{2}\, d^\top H_k d\\
       \text{s.t.} & \quad \|d\| \leq \Delta_k,
      \end{aligned}
      \quad\right\}
     \end{equation}
     that satisfies the \emph{generalized Cauchy-decrease condition}
     \begin{equation*}
      f(u_k) - q_k(d_k) \geq
      \frac{\mu}{2}\,\|g_k\|\,\min\Big\{ \Delta_k, \frac{\|g_k\|}{\|H_k\|} \Big\}.
     \end{equation*}
     \STATE Compute the quality indicator
     \begin{equation*}
      \rho_k := \frac{f(u_k) - f(u_k + d_k)}{f(u_k) - q_k(d_k)}.
     \end{equation*}
    \ELSE
     \STATE Identify the possibly biactive indices and denote the elements of the
     powerset of $\PP(u_k, \Delta_k)$ by $\BB^k_1, ..., \BB^k_{m_k}$ with $m_k = 2^{|\PP(u_k, \Delta_k)|}$.
     \FOR{$j = 1, ..., m_k$}
      \STATE Solve the \emph{adjoint equation}
      \begin{equation*}
       A(\AA_s(u_k) \cup \BB^k_j) p^k_j = \nabla_y J(y_k, u_k),
      \end{equation*}
      and set $g^k_j = p^k_j + \nabla_u J(y_k, u_k)$.
     \ENDFOR
      \STATE Compute an inexact, but feasible solution $d_k$ of the \emph{modified trust-region subproblem}
      \begin{equation}\tag{$\mathfrak{Q}_k$}\label{eq:qkvimod}
       \left.
       \begin{aligned}
        \min_{\zeta \in \R, d\in \R^n}  \quad & \mathfrak{q}_k(d,\zeta)
        := f(u_k) + \zeta + \frac{1}{2}\, d^\top H_k d\\
        \text{\textup{s.t.}}  \quad & \|d\| \leq \Delta_k, \\
        & \dual{g^k_j}{d} \leq \zeta \quad\forall\,  j = 1, ..., m_k .
       \end{aligned}
       \quad\right\}
      \end{equation}
      that satisfies the \emph{modified Cauchy-decrease condition}
     \begin{equation}\label{eq:cauchyvi}
      f(x_k) - \mathfrak{q}_k(d_k, \zeta_k) \geq
      \frac{\mu}{2}\,\psi(u_k, \Delta_k) \,\min\Big\{ \Delta_k, \frac{\psi(u_k, \Delta_k)}{\|H_k\|} \Big\}.
     \end{equation}
     \STATE Compute the stationarity measure $\psi(u_k, \Delta_k)$ as solution of
     \begin{equation}\label{eq:statmeas}
      \psi(u_k, \Delta_k)
      = - \min_{\xi \in \R, d\in \R^n}\{ \xi : \|d\|\leq 1, \; \dual{g^k_j}{d} \leq \xi \quad\forall\,  j = 1, ..., m_k \}.
     \end{equation}
     \STATE Compute the modified quality indicator
     \begin{equation*}
      \rho_k :=
      \begin{cases}
       \displaystyle{\frac{f(u_k) - f(u_k + d_k)}{f(u_k) - \mathfrak{q}_k(d_k, \zeta_k)}},
       & \text{if }  \psi(u_k, \Delta_k) > \|g_k\|\, \Delta_k \\
       0, & \text{if }  \psi(u_k, \Delta_k) \leq \|g_k\|\, \Delta_k.
      \end{cases}
     \end{equation*}
    \ENDIF
    \STATE Update: Set
    \begin{align*}
     u_{k+1} & :=
     \begin{cases}
      u_k, & \text{if } \rho_k \leq \eta_1 \quad \text{(null step)},\\
      u_k + d_k, & \text{otherwise} \quad \text{(successful step)},
     \end{cases} \\
     \Delta_{k+1} &:=
     \begin{cases}
      \beta_1\,\Delta_k, & \text{if } \rho_k \leq \eta_1,\\
      \max\{\Delta_{\min}, \Delta_k\}, & \text{if } \eta_1 < \rho_k \leq \eta_2,\\
      \max\{\Delta_{\min}, \beta_2 \Delta_k\}, & \text{if } \rho_k > \eta_2.
     \end{cases}
    \end{align*}
    Set $k = k+1$.
   \ENDIF
  \ENDFOR
 \end{algorithmic}
\end{algorithm}

\begin{remark}
 Completely analogously to Lemma~\ref{lem:reform}, one shows that the minimum on the right hand side of \eqref{eq:statmeas}
 equals $-\min_{\|d\|\leq 1} \phi(u_k, \Delta_k;d)$, which is the stationarity measure from Assumption~\ref{assu:model}(\ref{it:cstat}).
\end{remark}

\begin{theorem}
 Assume that Algorithm~\ref{alg:trvi} does not terminate in finitely many steps
 and that Assumption~\ref{assu:hesse} is satisfied for all $k\in \N$.
 Then every accumulation point of the sequence of iterates is C-stationary.
\end{theorem}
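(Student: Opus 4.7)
The plan is to observe that Algorithm~\ref{alg:trvi} is merely the specialization of the general non-smooth trust-region method from Algorithm~\ref{alg:TR} to the reduced optimal-control problem $\min_u f(u) = J(S(u), u)$ associated with~\eqref{eq:piv}, using the concrete Bouligand-subdifferential approximation $\GG(u,\Delta)$ defined in~\eqref{eq:subdiffapprox} and the model function $\phi$ from~\eqref{eq:modeloptctrl}. Once this identification is made, the statement is an immediate consequence of Corollary~\ref{cor:convcomp}, provided its two structural hypotheses, Assumptions~\ref{assu:approxsubdiff} and~\ref{assu:derivS}, are satisfied in the present setting.

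Both of these hypotheses have already been verified piece by piece above. Assumption~\ref{assu:derivS} is exactly the content of Lemma~\ref{lem:Srabl}. For Assumption~\ref{assu:approxsubdiff}, the set-valued inclusion~\eqref{eq:obermenge} is proved in Lemma~\ref{lem:obermenge}, and the limit property~\eqref{eq:Gapprox} follows from Lemma~\ref{lem:Gapprox}, which in fact shows the stronger statement $\GG(u_k, \Delta_k) \subseteq \partial_B S(u)$ for all sufficiently large $k$, so that $\dist(\GG(u_k, \Delta_k), \partial_B S(u))$ is eventually zero. In particular, the qualification $0 \notin \partial f(u)$ from Assumption~\ref{assu:approxsubdiff} is not needed for our choice of approximation.

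It then remains to check that the computational steps of Algorithm~\ref{alg:trvi} really implement the abstract subproblem~\eqref{eq:tildeqk} and its modified Cauchy-decrease condition. Since $\PP(u_k, \Delta_k)$ is finite, enumerating its powerset $\{\BB_1^k, \dots, \BB_{m_k}^k\}$ produces precisely the matrices in $\GG(u_k, \Delta_k)$, so the finitely many linear constraints $\dual{g_j^k}{d} \leq \zeta$ in~\eqref{eq:qkvimod} encode exactly the condition $\phi(u_k, \Delta_k; d) \leq \zeta$ — provided that the adjoint-equation construction of the $g_j^k$ reproduces $G^\top \nabla_y J(y_k, u_k) + \nabla_u J(y_k, u_k)$ for $G \in \GG(u_k, \Delta_k)$, which is the only genuine bookkeeping step in the argument and can be settled by exploiting the symmetry $A(\NN)^\top = A(\NN)$ together with the structure of $\chi(\NN)$. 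Lemma~\ref{lem:reform} then ensures that any $(d_k, \zeta_k)$ feasible for~\eqref{eq:qkvimod} and satisfying~\eqref{eq:cauchyvi} fulfills the modified Cauchy-decrease condition~\eqref{eq:tildecauchy}, and the reformulation~\eqref{eq:statmeas} analogously realizes the stationarity measure~\eqref{eq:normersatz}. Since all the analytical weight of the argument — namely the characterization of $\partial_B S$, its computable over-approximation, and the directional-derivative representation — has already been borne by Theorem~\ref{thm:boulimatrix} and Lemmas~\ref{lem:Srabl}, \ref{lem:obermenge}, and~\ref{lem:Gapprox}, no substantive obstacle remains: invoking Corollary~\ref{cor:convcomp} concludes the proof.
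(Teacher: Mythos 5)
Your proposal is correct and follows essentially the same route as the paper: it transfers the modified Cauchy-decrease condition via Lemma~\ref{lem:reform}, verifies Assumptions~\ref{assu:approxsubdiff} and~\ref{assu:derivS} through Lemmas~\ref{lem:Srabl}, \ref{lem:obermenge}, and~\ref{lem:Gapprox}, and concludes by invoking Corollary~\ref{cor:convcomp}. The extra observations you add (that Lemma~\ref{lem:Gapprox} makes the distance in \eqref{eq:Gapprox} eventually zero, and that the adjoint-equation bookkeeping matches $G^\top\nabla_y J + \nabla_u J$ by symmetry of $A(\NN)$) are sound refinements of the same argument rather than a different one.
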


\begin{proof}
 As seen in Lemma~\ref{lem:reform}, since the inexact, but feasible solution $(d_k, \zeta_k)$ of \eqref{eq:qkvimod} satisfies
 \eqref{eq:cauchyvi}, $d_k$ also fulfills the modified Cauchy-decrease condition in \eqref{eq:tildecauchy}.
 Therefore, we can apply the results for our general Algorithm~\ref{alg:TR}.
 As Lemmata~\ref{lem:Srabl}, \ref{lem:obermenge}, and \ref{lem:Gapprox} show, the conditions in Assumptions~\ref{assu:approxsubdiff}
 and \ref{assu:derivS}, that guarantee the convergence results for our trust-region algorithm applied to problems with composite functions
 as in \eqref{eq:optctrl}, are fulfilled in this concrete setting. Thus, Corollary~\ref{cor:convcomp} yields the claim.
\hfill\end{proof}


\section{Numerical results} \label{section: experiments}
In this section we verify the convergence properties of the proposed trust-region algorithm by means of two different examples. The first one is a toy problem in $\mathbb R^2$, for which the solution can be explicitely obtained and, moreover, the convergence hypotheses of the method analytically proved. Our purpose for this first experiment is to verify how the algorithm behaves in nondifferentiable points, i.e., biactive points, either when an optimal solution is biactive or when the algorithm has to move from such a point to further decrease the cost function.

The second example is concerned with the optimization of a variational inequality of the second kind arising from the discretization of an optimal control problem. The nondifferentiability in the variational inequality consists of the discretized $L^1$ norm of the state variable. The design variable (control) is the right hand side of the inequality, and represents a distributed control force on the whole geometric domain (see \cite{delosreyesmeyer} for further details).

Unless otherwise specified, the used trust-region parameters are: $\eta_1=0.25$, $\eta_2=0.75, \beta_1=0.5, \beta_2=1.1$  and the initial radius for the algorithm was set to $\Delta_0=1$. We use a positive definite BFGS second order matrix $H_k$, which is updated in every successful trust-region step. For the fraction of Cauchy decrease, we consider the parameter $\mu=0.8$. The algorithm stops whenever $\frac{|u_{k+1}-u_k|}{|u_0|}$ and the trust-region radius are smaller than a given tolerance. To accelerate the method we also compute the quasi Newton-step $-H_k^{-1}g$ and apply a dogleg strategy (see, e.g., \cite{kelley}).

\subsection*{Experiment 1}
We consider here the numerical solution of a toy example in $\mathbb R^2$ to illustrate the main problem and algorithm features. Let us consider the simplified variational inequality:
\begin{equation}
2y(v-y)+|v|-|y| \geq u(v-y), \quad \forall v \in \mathbb R,
\end{equation}
whose solution can be obtained using the soft thresholding operator and is given in closed form by:
\begin{equation}
y=
\begin{cases}
1/2(u-1) & \text{if }u \geq 1,\\
0 & \text{if }u \in [-1,1],\\
1/2(u+1) & \text{if }u \leq -1.
\end{cases}
\end{equation}
The solution mapping is clearly globally Lipschitz continuous and directionally differentiable. The directional derivative at $u$ in direction $h$ is given by $\eta \in \KK(y)$ solution of
\begin{equation}
2 \eta (v-\eta) \geq h(v-\eta), \quad \forall v \in \KK(y),
\end{equation}
where $\KK(y)$ is the convex cone defined by
\begin{equation}
\KK(y):=\{ v \in \mathbb R: v=0 \text{ if }|q|<1; vq \geq 0 \text{ if }y=0 \text{ and }|q|=1 \}.
\end{equation}
Since for this simplified case the biactive set corresponds to the cases $u \in \{-1,1\}$ and the set where $|q| <1$ is the same as $u \in (-1,1)$, the cone may also be written as
\begin{align}
\KK(y)&=\{ v \in \mathbb R: v=0 \text{ if }u \in (-1,1); v \geq 0 \text{ if }u=1, v \leq 0 \text{ if }u=-1 \}\\
&= \left\{ v \in \mathbb R: v \begin{cases}
\geq 0 & \text{if }u = 1,\\
= 0 & \text{if }u \in (-1,1),\\
\leq 0 & \text{if }u = -1.
\end{cases} \right\}
\end{align}
From the projection formula on convex sets it then follows that
\begin{equation}\label{eq: projection formula simplified VI}
\eta=\mathcal P_{\KK(y)} (\eta-c(2 \eta -h))=\mathcal P_{\KK(y)} ((1-2c) \eta +ch)), \quad \forall c>0.
\end{equation}

Considering the quadratic cost function
\begin{equation}
J(y,u)= \frac{1}{2} |y-z_d|^2+ \frac{\alpha}{2} |u|^2,
\end{equation}
we may define the reduced objective $f(u):=J(y(u),u)$, which, thanks to the Lipschitz continuity of the solution mapping, is a locally Lipschitz function. The directional derivative is then given by
\begin{equation}
f'(u)h = (y-z_d)^T \eta+ \alpha u^T h
\end{equation}
Taking the particular value $c=1/2$ in the projection formula \eqref{eq: projection formula simplified VI} we get that
\begin{equation}
\eta=  \mathcal P_{\KK(y)} \left( \frac{h}{2} \right)=
\begin{cases}
\frac{1}{2} \max(0,h) & \text{if }u = 1,\\
0 & \text{if }u \in (-1,1),\\
\frac{1}{2} \min(0,h) & \text{if }u = -1,\\
\frac{1}{2}h & \text{elsewhere.}
\end{cases}
\end{equation}
Consequently,
\begin{equation} \label{eq:directional derivative toy example}
f'(u)h=
\begin{cases}
\frac{1}{2} (y-z_d) \max(0,h)+ \alpha h & \text{if }u = 1,\\
\alpha u h & \text{if }u \in (-1,1),\\
\frac{1}{2} (y-z_d) \min(0,h) - \alpha h & \text{if }u = -1,\\
\frac{1}{2}(y-z_d)h+\alpha u h & \text{elsewhere.}
\end{cases}
\end{equation}

For the solution of this particular instance, we consider the Algorithm \ref{alg:trvi} with the direction $g \in \partial_Bf(u)$ given by $g=-(p+\alpha u)$, where the adjoint state $p$ is computed through
\begin{equation}
p_i=\begin{cases}
0 &\text{ if }i \not \in \II\\
(y-z_d)_i &\text{ if }i \in \II.
\end{cases}
\end{equation}
Since in this case we are working with a single real variable, the direction can be explicitely written as
\begin{equation}
g= \begin{cases}
-\alpha u  & \text{if }u \in [-1,1],\\
-\frac{1}{2}(y-z_d)-\alpha u & \text{elsewhere.}
\end{cases}
\end{equation}

Using \eqref{eq:directional derivative toy example} we may compute the directional derivative along the Bouligand direction $g$ yielding
\begin{align}
f'(u)g & =
\begin{cases}
\frac{1}{2} (y-z_d) \max(0,-\alpha u)- \alpha^2 u^2 & \text{if }u = 1,\\
-\alpha^2 u^2 & \text{if }u \in (-1,1),\\
\frac{1}{2} (y-z_d) \min(0,-\alpha u) - \alpha^2 u^2 & \text{if }u = -1,\\
- \left[ \frac{1}{2}(y-z_d)+\alpha u \right]^2 & \text{elsewhere.}
\end{cases}\\
& =
\begin{cases}
-\alpha^2 u^2 & \text{if }u \in [-1,1],\\
- \left[ \frac{1}{2}(y-z_d)+\alpha u \right]^2 & \text{elsewhere.}
\end{cases}
\end{align}
Consequently, $f'(u)g = -|g|^2$ for the direction considered, i.e., the element in Assumption~\ref{assu:derivS} is explicitely given.

If the trust-region radius becomes smaller than $\Delta_{\min}=1e-2$, problem \eqref{eq:qkvimod} is solved for all possible variations of the possibly biactive set according to Definition~\ref{def:possbiact}. For the current toy problem the latter reduces to solve two auxiliary QP problems, and one extra for determining $\psi(u_k,\Delta_k)$. The auxiliary problems are solved using a Sequential Least Squares Programming (SLSQP) algorithm available in SciPy's Optimization library.


Since in this case the solution can be computed analytically, the different properties of the algorithm can also be easily verified. This involves in particular the attainability of minima, mainly when these points are biactive, or the escape from biactive points when they are reached along the iterative process.

In Figure \ref{figire: toy example functions} the solution operator and the composite cost function are plotted. As it can be oberved, the resulting objective function is piecewise differentiable with two local minima at $\bar u_1=-1$ and $\bar u_2=(4 \alpha u_d+2 z_d+1)/(4 \alpha +1)$. The points $u=-1$ and $u=1$ are biactive, and only one of them corresponds to a local minimum for the problem.
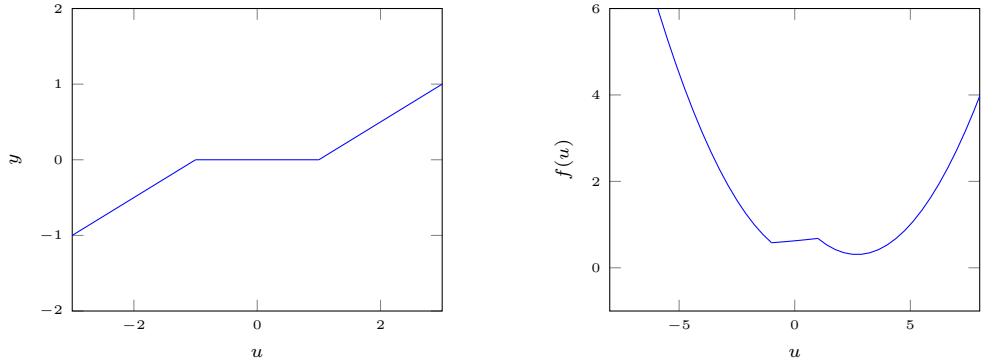
\begin{figure}[H]
  \begin{tikzpicture}
  	\begin{axis}[
      xmin=-3,   xmax=3,
ymin=-2,   ymax=2,
xlabel={$u$},
ylabel={$y$},
ylabel style = {font=\scriptsize},
xlabel style = {font=\scriptsize},
yticklabel style = {font=\tiny},
xticklabel style = {font=\tiny}]
  	]
  	\addplot[blue,domain=-3:-1,mark=] {0.5*(x +1)};
    \addplot[blue,domain=-1:1,mark=] {0*x};
    \addplot[blue,domain=1:3,mark=] {0.5*(x -1)};
  	\end{axis}
  \end{tikzpicture}
  \hfill
  \begin{tikzpicture}
  	\begin{axis}[
      xmin=-8,   xmax=8,
ymin=-1,   ymax=6,
xlabel={$u$},
ylabel={$f(u)$},
ylabel style = {font=\scriptsize},
xlabel style = {font=\scriptsize},
yticklabel style = {font=\tiny},
xticklabel style = {font=\tiny}]
  	]
  	\addplot[blue,domain=-10:-1,mark=] {0.5*(0.5*(x+1)-1)^2+0.01/2*(x+5)^2};
    \addplot[blue,domain=-1:1,mark=] {0.5+0.01/2*(x+5)^2};
    \addplot[blue,domain=1:10,mark=] {0.5*(0.5*(x-1)-1)^2+0.01/2*(x+5)^2};
  	\end{axis}
  \end{tikzpicture}
  \caption{Solution operator (left) and cost function (right). Parameters: $\alpha =0,01$, $z_d=1$, $u_d=-5$.} \label{figire: toy example functions}
\end{figure}

If the initial point of Algorithm~\ref{alg:trvi} is chosen below or equal than one, then the method converges towards the local minimum $\bar u_1$, while it converges to $\bar u_2$ if the starting iterate is chosen greater than one. In Figure \ref{fig: iterations toy} the number of trust-region iterations for different initial points and $\alpha$ values is depicted. It can be easily observed that reaching the nonsmooth minimum requires significantly more iterations than reaching the smooth one, which is intuitively expected. Despite of that, the total number of trust-region iterations stays below 20 in all cases.

\begin{figure}
\centering
\includegraphics[height=7.5cm,width=10cm]{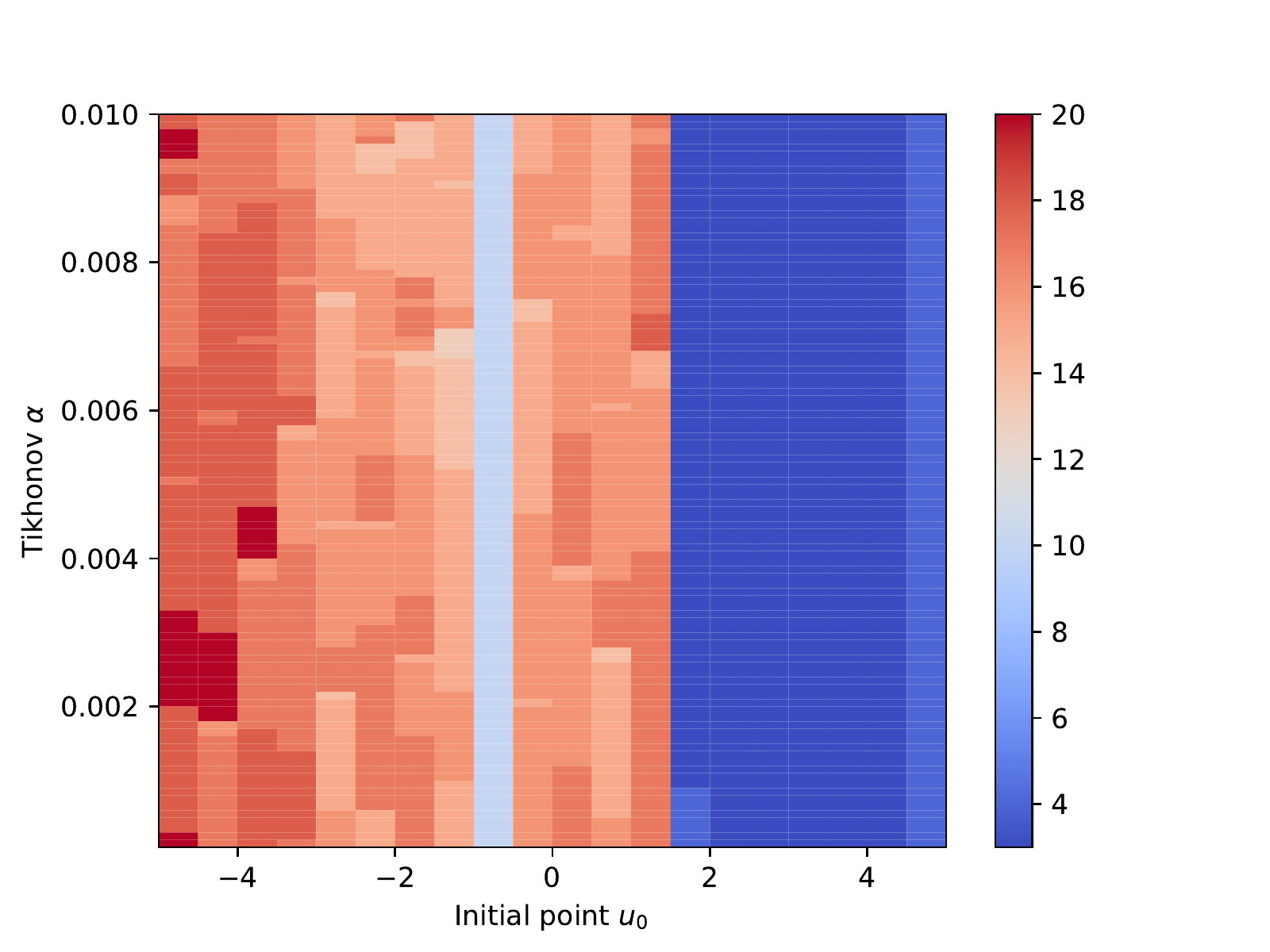}
\caption{Number of trust-region iterations, depending on the initial guess $u_0 \in [-5,5]$ and the Tikhonov parameter $\alpha \in [1e-4,1e-2]$.} \label{fig: iterations toy}
\end{figure}

\subsection*{Experiment 2}
Let us start this experiment by recalling problem \eqref{eq:piv}:
\begin{equation}
 \left.
 \begin{aligned}
  \min_{y,u\in \R^n} & \quad J(y,u)\\
  \text{s.t.} & \quad \dual{\nu^{-1}A y}{v-y} + |v|_1 - |y|_1 \geq \dual{\nu^{-1} u}{v-y}, \quad \forall\, v\in \R^n,
 \end{aligned}
 \qquad\right\}
\end{equation}
where $|.|_1$ denotes the 1-norm, i.e., $|v|_1 = \sum_{i=1}^n |v_i|$. We consider the matrix $A$ as the homogeneous finite differences discretization matrix of the negative two-dimensional Laplace operator with homogeneous Dirichlet boundary conditions on the domain $\Omega=(0,1)^2$. The 1-norm, together with the weight $\nu$, is responsible for the level of sparsity in the computed state. When $\nu$ increases, the state becomes sparser up to a certain threshold where the state is identically zero.

As cost function we choose the classical tracking--type objective
\begin{equation*}
  J(y,u): = \frac{1}{2} \|y- z_d\|^2+ \frac{\alpha}{2} \|u\|^2,
\end{equation*}
where $\alpha>0$ is the Tikhonov weight and $z_d=\chi_{x_1 \geq 1}$. In Figure \ref{figure: states for different thresholds} the controlled states for different values of $\nu$ are shown. As expected, it can be observed that as $\nu$ becomes larger, the region where the state has zero value also increases. This is something expected from the structure of the variational inequality constraint. Concerning the behaviour of the biactive sets, in Figure \ref{figure: biactive sets for different thresholds} these sets are plotted for different $\nu$ values. It can be realized that the biactive region is not dismissible in practice and should be carefully handled.
\begin{figure}[h]
  \begin{center}
\includegraphics[height=4cm,width=6cm]{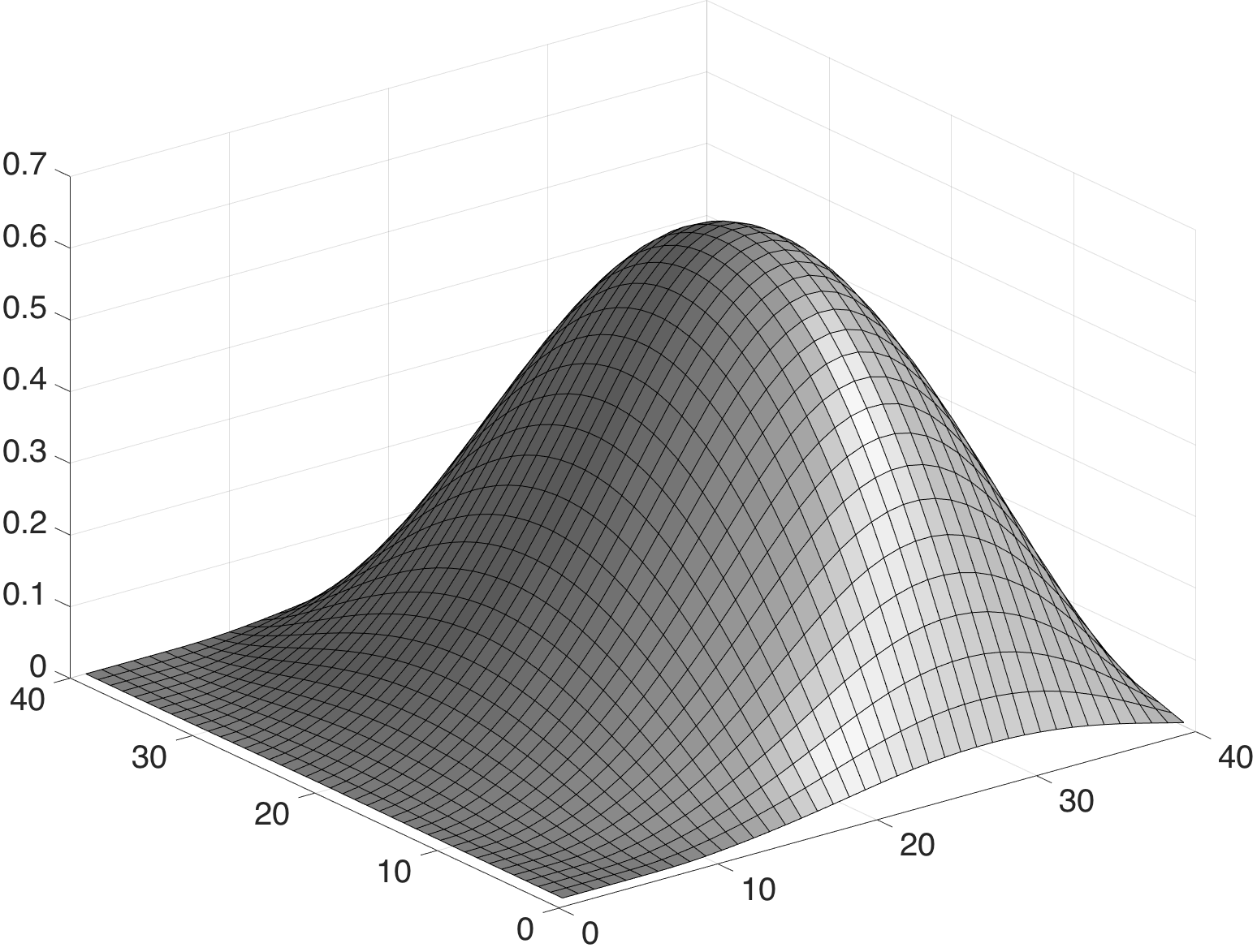} \hfill
\includegraphics[height=4cm,width=6cm]{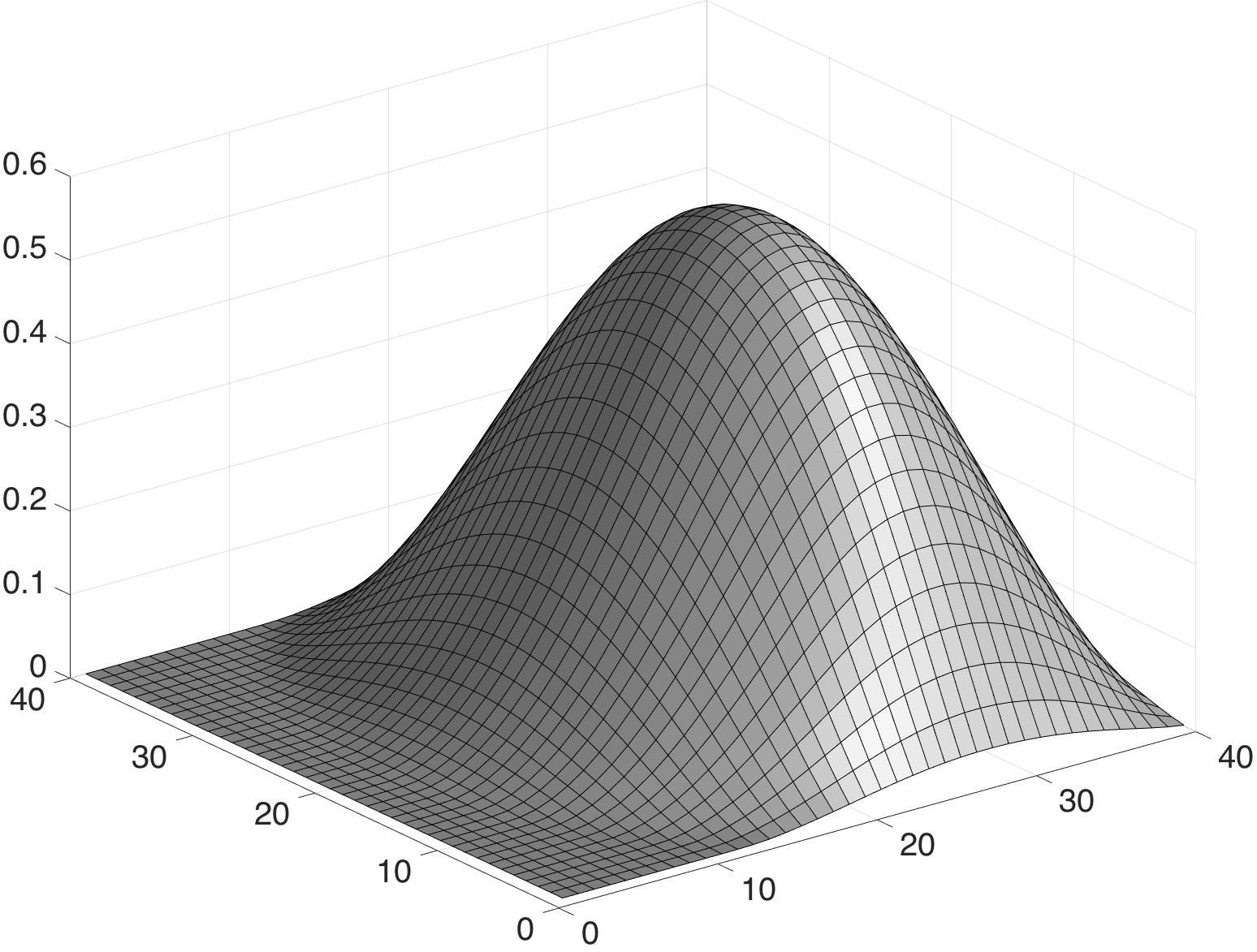}\\
\includegraphics[height=4cm,width=6cm]{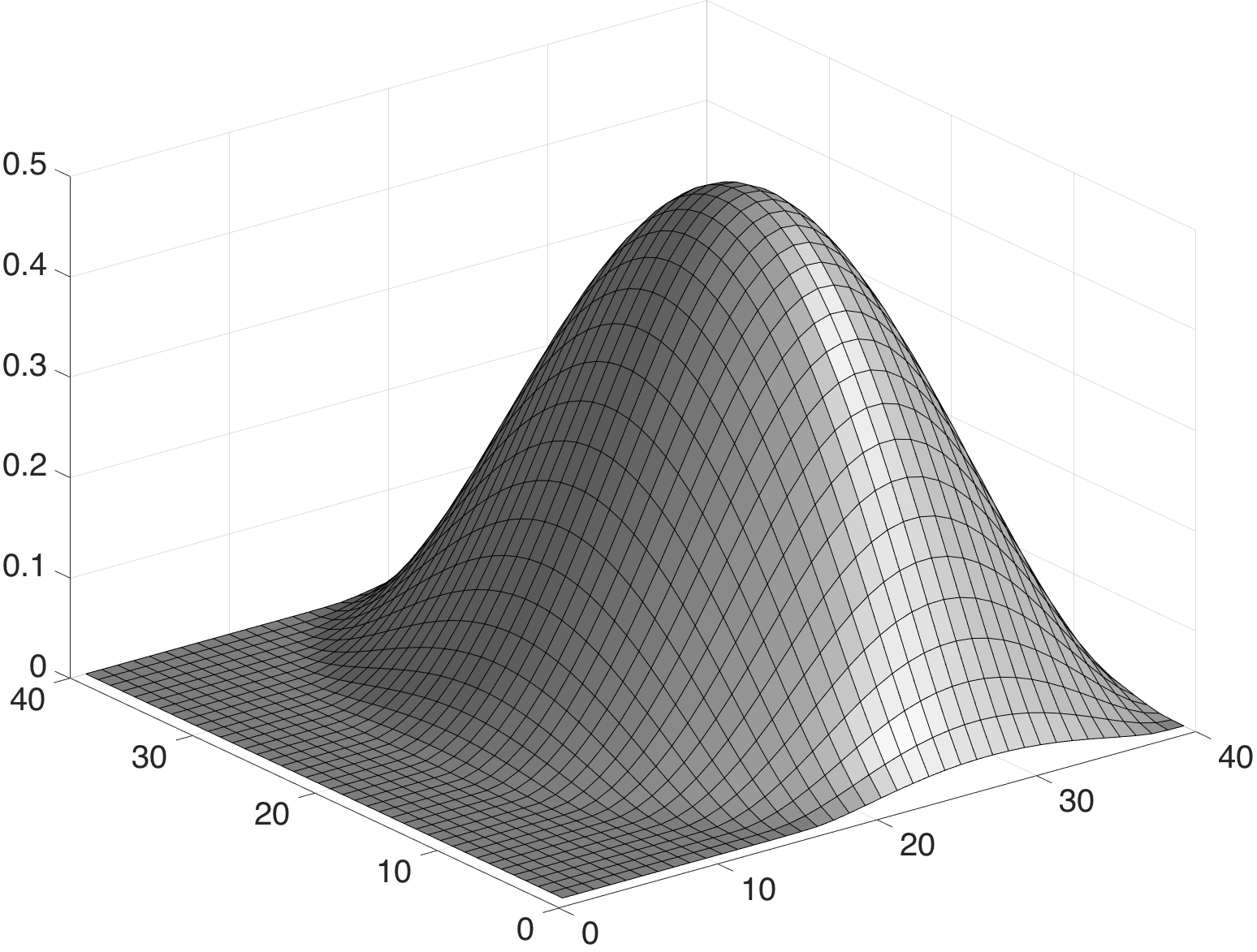} \hfill
\includegraphics[height=4cm,width=6cm]{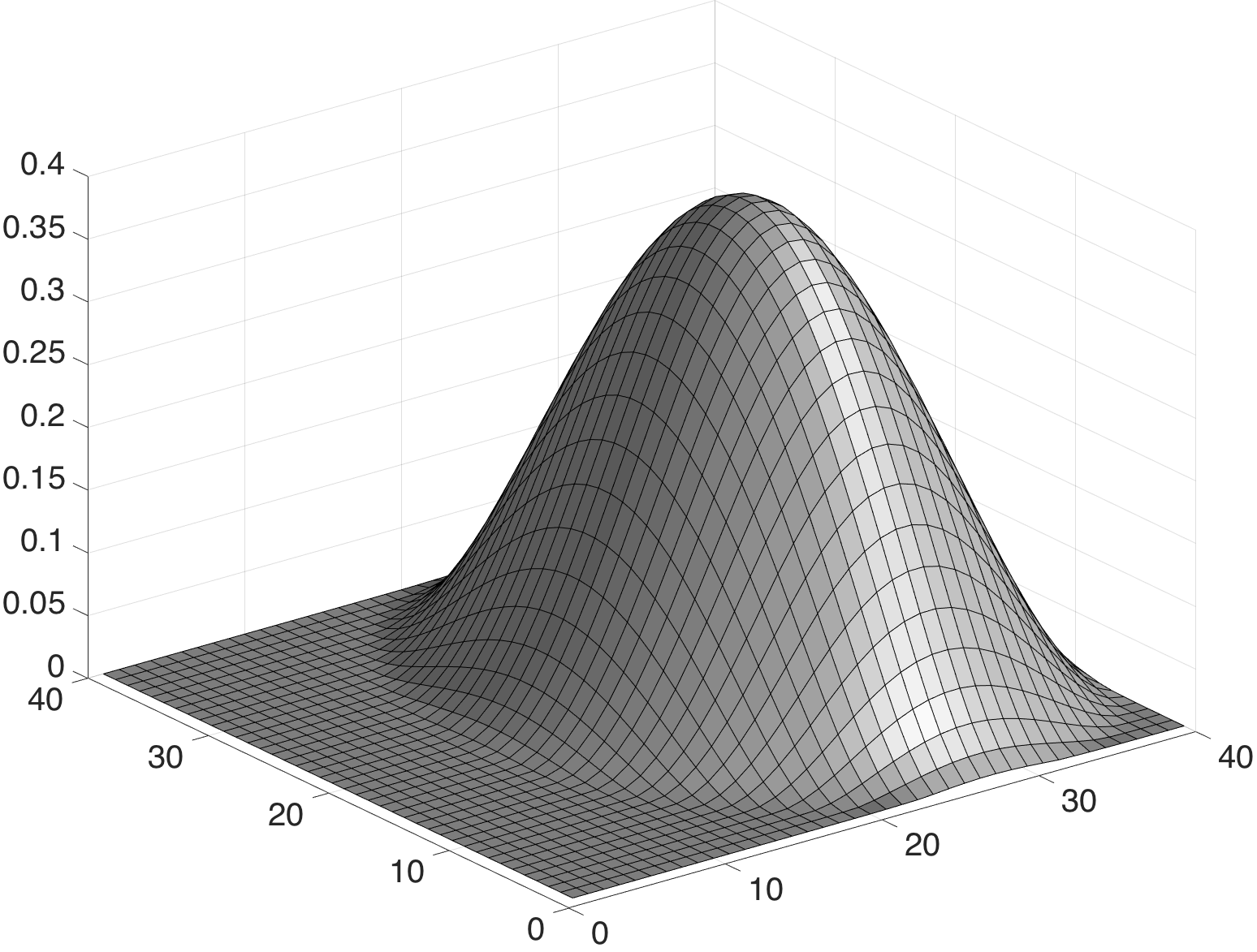}\\
\caption{Optimal controlled state $y$ for different parameters $\nu$. From the left upper corner to the lower right corner: $\nu=4$, $\nu=8$, $\nu=12$, $\nu=18$. Tikhonov parameter: $\alpha = 0.001$; mesh size step $h=1/41$.}
\end{center} \label{figure: states for different thresholds}
\end{figure}

\begin{figure}
  \begin{center}
\includegraphics[height=3.5cm,width=4cm]{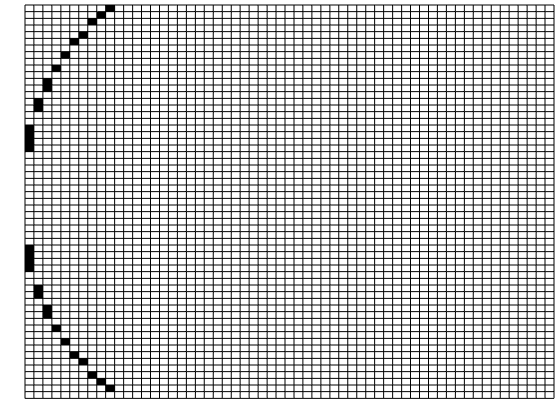} \hspace{0.5cm}
\includegraphics[height=3.5cm,width=4cm]{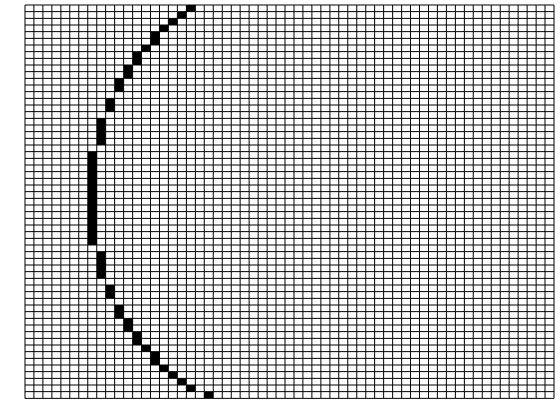}\\
\includegraphics[height=3.5cm,width=4cm]{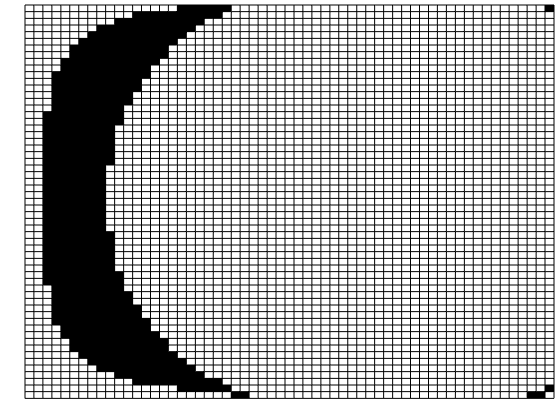} \hspace{0.5cm}
\includegraphics[height=3.5cm,width=4cm]{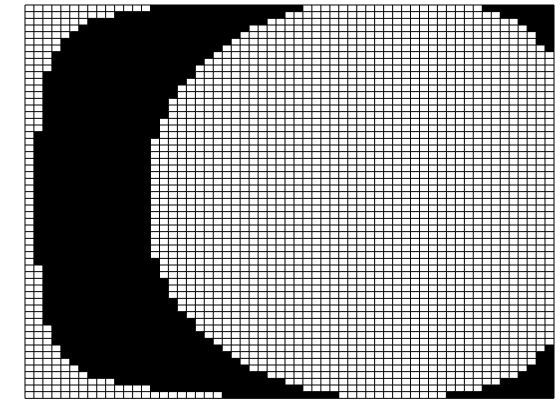}
\caption{Biactive sets for different parameters $\nu$. From the left upper corner to the lower right corner: $\nu=4$, $\nu=8$, $\nu=12$, $\nu=18$. Tikhonov parameter: $\alpha = 1\mathrm{e}-3$; mesh size step $h=1/61$.}
\end{center} \label{figure: biactive sets for different thresholds}
\end{figure}

\emph{Implementation details.}
The computational domain $\Omega$ was discretized using homogeneous finite differences with step size $h=\frac{1}{n+1}$, where $n$ is the number of inner discretization points. The resulting stiffness matrix $A$ is therefore penta-diagonal, symmetric and positive definite. For the solution of the linear systems we therefore considered MATLAB's exact sparse solver. The trust-region radius lower bound was set to $\Delta_{\min}=1e-3$

The main costly step in our algorithm is the solution of the variational inequality constraint. To improve the total computing time we therefore consider the inexact solution of the variational inequality constraint in the first trust-region iterations, and the exact solution of the problem in the last ones. Specifically, up to a tolerance of $1\mathrm{e}-2$ for the norm of the trust-region residuum, the variational inequality is solved by means of a semismooth Newton method (with a Huber regularization \cite{huber2011robust}). After that, and until the required tolerance is reached (typically $1\mathrm{e}-6$), the lower level problem is solved using a recently proposed orthantwise method \cite{dlrlm}. The orthantwise method stops whenever the norm of the pseudo-gradient is smaller than $1\mathrm{e}-7$.

\emph{Performance.}
The proposed inexact trust--region algorithm does not have significant difficulties for solving the optimization problem at hand, even when the solution has large biactive sets. The total iteration number of the trust--region method is provided in Table \ref{table: TR iterations for meshes} for two medium size meshes and different values of the Tikhonov parameter $\alpha$ and the coefficient $\nu$. The number of trust-region iterations where the semismooth Newton solver was used is registered in parenthesis.


\begin{center}
\begin{table}
\scalebox{0.9}{
\begin{tabular}{|l|c|c|c|c||c|c|c|c|} \hline
  & \multicolumn{4}{c||}{$h=1/20$} &\multicolumn{4}{c|}{$h=1/40$}\\ \hline
  \backslashbox{$\alpha$}{$\nu$} &4 &8 &12 &18 &4 &8 &12 &18\\    \hline
  $1e-1$& 46(20) &46(20) &46(20) &46(20) &53(25) &53(25) &53(25) &53(25)\\    \hline
  $1e-2$& 26(22) &69(20) &69(20) &69(20) &30(26) &76(24) &76(24) &76(24)\\    \hline
  $1e-3$& 35(31) &28(23) &26(21) &72(08) &46(41) &36(25) &29(25) &79(08)\\    \hline
  $1e-4$& 35(30) &38(33) &41(35) &72(08) &104(40) &105(37) &85(42) &79(08)\\    \hline
\end{tabular}}
\caption{Number of iterations in two different meshes for different values of $\alpha$ and $\nu$.} \label{table: TR iterations for meshes}
\end{table}
\end{center}


\section{Conclusions}
We have presented a non-smooth trust--region algorithm for solving optimization problems with locally Lipschitz continuous functions and,
under suitable assumptions, we prove convergence of the iterates to a C-stationary point.
The structure of the trust-region subproblem is dependent on the size of the current trust-region radius and allows to escape from non-differentiable points which are not necessarily local minima.

As a particular instance we have considered optimization problems with a class of variational inequality constraints and proposed a computable model function to be used along the trust--region iterations.
The construction of this model is based on a precise characterization of the Bouligand-subdifferential associated with the
solution operator of the variational inequality. Moreover, thanks to the structure of these types of problems, the computation of the solution to the trust-region subproblem \eqref{eq:qkvimod} can be carried out just by adding a finite number of inequality constraints dependent on the size of the biactive set.

The proposed algorithm is general enough to deal with several classes of problems. We tested it for the particular case of optimization problems constrained by variational inequalities of the second kind and its performance was succesfully verified. The investigation of the behaviour of the algorithm for other types of problem is a matter of future research.

\begin{appendix}

\section{Proof of Lemma~\ref{lem:counterex1d}}\label{sec:counterex1d}

First we note that the setting in \eqref{eq:params} and \eqref{eq:delta0} fulfills the requirements
of the initialization in Step~\ref{it:init} of Algorithm~\ref{alg:TR}. In particular,
the condition $\beta_1 + \beta_1 \beta_2 < 1$ ensures that the interval for $x_0$ is non-empty and
that $\Delta_0 \in (0,1]$. It moreover guarantees that
\begin{equation*}
 \theta := \Big(\frac{b}{a} - 1\Big)\frac{\beta_1}{\beta_1\beta_2-1} + \frac{b}{a}
 \in (0,1)
\end{equation*}
so that the interval $[\theta, 1)$ is non-empty and $\eta_1$ can be chosen as required in \eqref{eq:params}.

Throughout the proof, we assume that the trust-region subproblems in \eqref{eq:qk} and \eqref{eq:tildeqk}
are solved exactly (which is not restrictive at all, as they represent one-dimensional linear programs).
We show by induction that, for all $k\in \N_0$, it holds
\begin{alignat*}{9}
 & \text{(i)} & \;\;  x_{2k} &= (\beta_1\beta_2)^k x_0, & \quad
 & \text{(ii)} & \;\; x_{2k+1} & = x_{2k}, &\quad
 & \text{(iii)} & \;\;  \Delta_{2k} &= (\beta_1\beta_2)^k \Delta_0, \\
 & \text{(iv)} & \;\;  \Delta_{2k+1} &= \beta_1(\beta_1\beta_2)^k \Delta_0, & \quad
 & \text{(v)} & \;\;  \rho_{2k} & \equiv \theta, & \quad
 & \text{(vi)} & \;\;  \rho_{2k+1} &\equiv 1.
\end{alignat*}
Let $k = 0$. Then (i) and (iii) are obviously true. Furthermore, since $\Delta_0 < \Delta_{\min}$
by \eqref{eq:delta0}, the first trust-region subproblem is given by \eqref{eq:tildeqk}, whose
exact solution is $d_0 = \Delta_0$ because of $\widetilde\phi(x_0, \Delta_0; d) = - a d$ (as $x_0 < 0$)
and $H_0 = 0$. Therefore, thanks to \eqref{eq:delta0}, we arrive at
\begin{equation}\label{eq:x0delta0}
 x_0 + d_0 = x_0 + \Delta_0 = x_0 \Big(1 + \frac{\beta_1\beta_2 - 1}{\beta_1}\Big) \in (0, 1)
\end{equation}
Consequently, on account of \eqref{eq:delta0} and $\psi(x_0, \Delta_0) = a > a \Delta_0$, \eqref{eq:rhomod} gives
\begin{equation*}
 \rho_0 = \frac{f(x_0) - f(x_0 + d_0)}{- \widetilde\phi(x_0, \Delta_0; d_0)}
 =\frac{-a x_0 + b(x_0 + \Delta_0)}{a \Delta_0}  = \theta \leq \eta_1
\end{equation*}
so that (v) is true for $k=0$. The update in \eqref{eq:upx} and \eqref{eq:updelta} thus implies
$x_1 = x_0$ and $\Delta_1 = \beta_1\Delta_0$, i.e., (ii) and (iv) for $k=0$.
To compute $\rho_1$, observe that again $d_1 = \Delta_0$
is the solution of the trust-region subproblem and $x_1 + \Delta_1 = \beta_1\beta_2x_0 < 0$. Hence,
\begin{align*}
 \rho_1 = \frac{f(x_1) - f(x_1 + \Delta_1)}{-\widetilde\phi(x_1,\Delta_1)}
 = \frac{-a x_1 + a(x_1 + \Delta_1)}{a \Delta_1} = 1,
\end{align*}
giving (vi) for $k=0$.

Now let $k \in \mathbb{N}_0$ be arbitrary and suppose that (i)--(vi) is true for that $k$.
Since $x_{2k + 1} = x_{2k} = (\beta_1\beta_2)^kx_0 < 0$, we obtain $d_{2k+1} = \Delta_{2k + 1}$
as the is the exact solution of the trust-region subproblem.
Thus, from $\rho_{2k+1} = 1$ and the update formulas in \eqref{eq:upx} and \eqref{eq:updelta}
it follows that $x_{2(k+1)} = x_{2k + 1} + \Delta_{2k + 1} = (\beta_1\beta_2)^kx_0 + \beta_1(\beta_1\beta_2)^k\Delta_0 = (\beta_1\beta_2)^{k+1}x_0$ and $\Delta_{2(k + 1)} = \beta_2\Delta_{2k + 1} = (\beta_1\beta_2)^{k+1}\Delta_0$, i.e.,  (i) and (iii) for $k+1$.
Thus, $x_{2(k+1)} < 0$ and $\Delta_{2(k+1)}$ is the exact solution of the trust-region subproblem.
Moreover, \eqref{eq:x0delta0} yields
$x_{2(k+1)} + \Delta_{2(k+1)} = (\beta_1\beta_2)^{k+1}(x_{0} + \Delta_{0}) \in (0,1)$ and consequently
\begin{align*}
 \rho_{2(k+1)} =& \frac{f(x_{2(k+1)}) - f(x_{2(k+1)} + d_{2(k+1)})}{-\widetilde\phi(x_{2(k+1)},d_{2(k+1)})} \\
 =& \frac{-a x_{2(k+1)} + b(x_{2(k+1)} + \Delta_{2(k+1)})}{a \Delta_{2(k+1)}} \\
 =& -\frac{(\beta_1\beta_2)^{k+1}x_0}{(\beta_1\beta_2)^{k+1}\Delta_0}
 + \frac{b}{a}\frac{(\beta_1\beta_2)^{k+1}x_0 + (\beta_1\beta_2)^{k+1}\Delta_0}{(\beta_1\beta_2)^{k+1}\Delta_0}
 = \theta \leq \eta_1,
\end{align*}
which is (v) for $k+1$. Therefore, the update formulas in \eqref{eq:upx} and \eqref{eq:updelta} give (ii) and (iv) for $k+1$.
Therefore, it follows that
\begin{align*}
 x_{2(k+1) + 1} + \Delta_{2(k+1) + 1} &= (\beta_1\beta_2)^{k+1}x_0 + \beta_1(\beta_1\beta_2)^{k+1}\Delta_0 \\
 &= (\beta_1\beta_2)^{k+1}x_0 + (\beta_1\beta_2)^{k+1}(\beta_1\beta_2-1)x_0 = (\beta_1\beta_2)^{k+2}x_0 < 0
\end{align*}
such that
\begin{align*}
 \rho_{2(k+1) + 1}
 &= \frac{f(x_{2(k+1) + 1}) - f(x_{2(k+1) + 1} + \Delta_{2(k+1) + 1})}{-\widetilde\phi(x_{2(k+1) + 1},\Delta_{2(k+1) + 1})} \\
 &= \frac{-a x_{2(k+1) + 1} + a(x_{2(k+1) + 1} + \Delta_{2(k+1) + 1})}{a \Delta_{2(k+1) + 1}} = 1,
\end{align*}
i.e., (vi) for $k+1$, which completes the proof of (i)--(vi).
Therefore, if the local model in \eqref{eq:wrongmod} is used and the trust-region subproblems are solved exactly, then,
with the setting in \eqref{eq:params} and \eqref{eq:delta0}, we obtain $x_k \to 0$, as claimed.

By contrast, if we apply the non-local model function in \eqref{eq:correctmod}, then $x_k \to 1$, which is seen as follows:
Firstly, one shows analogously to the proofs of Lemma~\ref{lem:remainder} and \ref{lem:statmeas} that the model function in
\eqref{eq:correctmod} satisfies the conditions in Assumption~\ref{assu:model}(\ref{it:model}).
Therefore, the convergence analysis from Section~\ref{subsec:conv} applies. On the other hand, by construction of the algorithm,
the sequence of iterates stays in the level set $\{x\in \R: f(x) \leq f(x_0)\}$, which is clearly compact
for every initial point $x_0$ in case of our one-dimensional objective in \eqref{eq:objbsp}.
Therefore, there is a converging subsequence, whose limit it C-stationary by Theorem~\ref{thm:TRconv}. The only C-stationary point
of this objective however is $x = 1$ so that the uniqueness of the limit gives the convergence of the whole sequence.

\begin{remark}
 Based on \textup{(i)--(vi)}, we can now specify the failure of the local model in \eqref{eq:wrongmod}.
 Evaluating the expression in \eqref{eq:remainder} for the sequence of iterates yields
 \begin{align*}
  \frac{f(x_{2k} + d_{2k}) - f(x_{2k}) - \widetilde\phi(x_{2k}, \Delta_{2k}; d_{2k})}{\Delta_{2k}}
  =& \frac{-b (x_{2k} + \Delta_{2k}) + a x_{2k} + a \Delta_{2k}}{\Delta_{2k}} \\
  =& \frac{(a-b) (x_{0} + \Delta_{0})}{ \Delta_0} \equiv \text{const.} > 0
 \end{align*}
 although $x_{2k} \to 0$, $\Delta_{2k} \to 0$, and $\psi(x_{2k}, \Delta_{2k}) \to a > 0$, so that
 Assumption~\ref{assu:model}(\ref{it:remainder}) is not fulfilled in case of the local model.
 Since this assumption can be interpreted as a non-smooth remainder term property,
 this again demonstrates the lack of neighborhood information in case of local models.
\end{remark}

\end{appendix}

\section*{Acknowledgement}
The authors thank Stephan Walther (TU Dortmund) for elaborating the proof of Lemma~\ref{lem:counterex1d}.


\end{document}